\newtheorem{theorem}{Theorem}[section]
\newtheorem{proposition}[theorem]{Proposition}
\newtheorem{lemma}[theorem]{Lemma}
\newtheorem{corollary}[theorem]{Corollary}
\newtheorem{remark}[theorem]{Remark}
\newcommand{\topic}[1]{\par\bigskip\noindent{\bf #1}\nopagebreak\smallskip\noindent\par}
\def\tsum{\mathop{\textstyle \sum }}%
\def\func#1{\mathop{\rm #1}\nolimits}%
\newcommand{\E}[1]{\operatorname{\mathbf{E}}\left[#1\right]}
\newcommand{\pr}[1]{\operatorname{\mathbf{P}}\left(#1\right)}
\newcommand{\IGNORE}[1]{}
\begin{document}

\title{\Large \textbf{Characterizing Optimal Sampling of Binary Contingency Tables via the
Configuration Model}}

\author{Jose~Blanchet\thanks{Department of Industrial Engineering and Operations Research, 
        Columbia University, 
        New York, NY 10027, U.S.A.\ \ Email: \hbox{blanchet@columbia.edu}.}
        %Supported in part by NSF grants DMS-0806145, DMS-0846816 and DMS-1069064.} 
\and 
        Alexandre~Stauffer\thanks{Computer Science Division, University
        of California, Berkeley, CA~94720-1776, U.S.A.\ \ Email:
        \hbox{stauffer@cs.berkeley.edu}.}
        %Supported by a Fulbright/CAPES scholarship and NSF grants CCF-0635153 and 
        %DMS-0528488.}
}

\date{}

\maketitle

\begin{abstract}
%%%%shortversion
A binary contingency table is an $m\times n$ array of binary entries with
row sums $\mathbf{r}=(r_{1},\ldots ,r_{m})$ and column sums $%
\mathbf{c}=(c_{1},\ldots ,c_{n})$. 
The configuration model generates a contingency table by 
considering $r_i$ tokens of type 1 for each row $i$ and $c_j$ tokens of type 2 for each column $j$,
and then taking a uniformly random pairing between type-1
and type-2 tokens.
We give a necessary and sufficient condition so that
the probability that the configuration model outputs a binary contingency
table remains bounded away from $0$ as $N=\sum_{i=1}^m r_i=\sum_{j=1}^n c_j$ goes to $\infty$. 
% In other words, our result establishes 
% a necessary and sufficient condition for the probability that the configuration model outputs
% a bipartite graph with prescribed degree sequence and no multiple edges to be $\Omega(1)$ as the number of 
% edges in the graph goes to $\infty$.
Our finding shows surprising
differences from recent results for binary \textit{symmetric} contingency
tables.
%%%%longversion
% A binary contingency table is an $m\times n$ array of binary entries with
% prescribed row sums $\mathbf{r}=(r_{1},\ldots ,r_{m})$ and column sums $%
% \mathbf{c}=(c_{1},\ldots ,c_{n})$. The configuration model for uniformly
% sampling binary contingency tables proceeds as follows. First, label $%
% N=\sum_{i=1}^{m}r_{i}$ tokens of type 1, arrange them in $m$ cells, and let
% the $i$-th cell contain $r_{i}$ tokens. Next, label another set of tokens of
% type 2 containing $N=\sum_{j=1}^{n}c_{j}$ elements arranged in $n$ cells,
% and let the $j$-th cell contain $c_{j}$ tokens. Finally, pair the type-1
% tokens with the type-2 tokens by generating a random permutation until the
% total pairing corresponds to a binary contingency table. Generating one
% random permutation takes $O(N)$ time, which is optimal up to constant
% factors. A fundamental question is whether a constant number of permutations
% is sufficient to obtain a binary contingency table. In the current paper, we
% solve this problem by showing a necessary and sufficient condition so that
% the probability that the configuration model outputs a binary contingency
% table remains bounded away from $0$ as $N$ goes to $\infty$. Our finding shows surprising
% differences from recent results for binary \textit{symmetric} contingency
% tables.
\newline
\newline
\emph{Keywords and phrases.} Contingency tables, configuration model, uniform sampling
%\newline
%MSC 2010 \emph{subject classifications.}
%Primary 82C43; %Time-dependent percolation
%Secondary 60G55, % Point processes
%          60D05, % Geometric probability and stochastic geometry
%          60J65, % Brownian motion
%          60K35, % Interacting random processes; statistical mechanics type models; percolation theory
%          82C21. % Dynamic continuum models (systems of particles, etc.)
\end{abstract}

%\newpage
%\setcounter{footnote}{0}
%\setcounter{page}{1}
%===============================================================================
%===============================================================================
%===============================================================================
\section{Introduction}
\label{sec:intro}

Given two natural numbers $m$ and $n$, let $\mathbf{r}=(r_{1},r_{2},\ldots
,r_{m})$ and $\mathbf{c}=(c_{1},c_{2},\ldots ,c_{n})$ be vectors of positive
integers such that $\sum_{i=1}^{m}r_{i}=\sum_{j=1}^{n}c_{j}=N$. Let $%
\Omega _{\mathbf{r},\mathbf{c}}$ be the set of matrices with binary entries
such that the sum of the $i$-th row is given by $r_{i}$ and the sum of the $j
$-th column is given by $c_{j}$. These matrices are known as binary
contingency tables. We consider the problem of sampling uniformly from $%
\Omega _{\mathbf{r},\mathbf{c}}$ and of computing $|\Omega _{\mathbf{r},%
\mathbf{c}}|$. 

A binary contingency table can be used to represent the adjacency matrix of a bipartite 
graph. 
Therefore, the problem of sampling uniformly from $\Omega_{\mathbf{r},\mathbf{c}}$
is equivalent to uniformly sampling a bipartite graph with $m+n$ nodes such that the node degrees in one
partition are given by $r_1,r_2,\ldots,r_m$ and the node degrees in the other
partition are given by $c_1,c_2,\ldots,c_n$ (see Figure~\ref{fig:example}(a) and~(b) for an example).
\begin{figure}[tbp]
   \begin{center}
      \includegraphics[scale=.8]{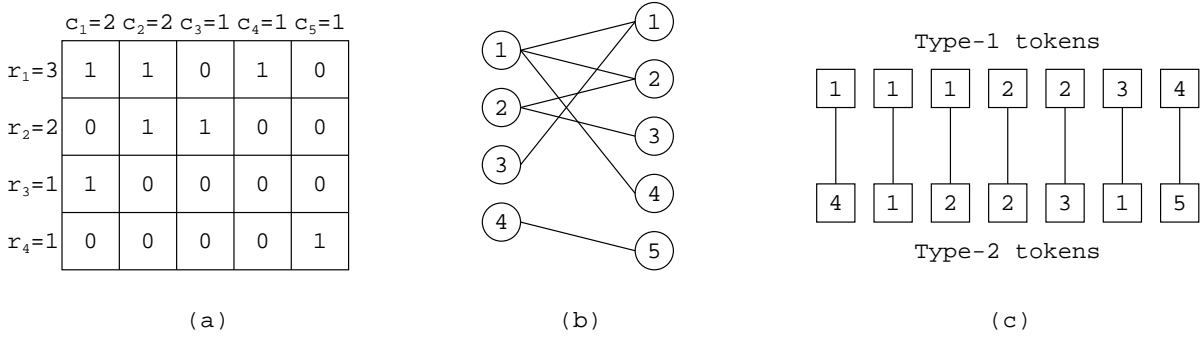}
   \end{center}
   \caption{(a)~A binary contingency table for the sequences $\mathbf{r}=\{3,2,1,1\}$ and $\mathbf{c}=\{2,2,1,1,1\}$. 
   (b)~The bipartite graph corresponding to the table of part~(a), where the leftmost partition represents the rows of the table and the rightmost partition
   represents the columns of the table. 
   (c)~A possible pairing between type-1 and type-2 tokens of the configuration model that corresponds to the table of part~(a). The labels in the type-1 and 
   type-2 tokens represent the index of the row and column, respectively, corresponding to that token.}
   \label{fig:example}
\end{figure}

We study the following well-known and simple algorithm for sampling contingency tables,
which is usually referred to as the
configuration model and was introduced by Bollob\'{a}s~\cite%
{bollobas1980}. 
%The configuration model proceeds as follows.
For each row $i$,
consider $r_{i}$ tokens of type 1, and for each column $j$, consider $c_{j}$
tokens of type 2. Then, construct a table $T$ by sampling uniformly a random
matching between type-1 and type-2 tokens. In other words, 
first order the type-1 tokens in some arbitrary manner and draw a uniformly random permutation
of the type-2 tokens. Then, establish a matching between type-1 and
type-2 tokens according to the position in the permutation. In this way, the
entry $T_{i,j}$ is taken to be the number of type-1 tokens from row $i$ that
were matched to type-2 tokens from column $j$ (see Figure~\ref{fig:example}(c) for an example).
In the context of sampling bipartite graphs, the tokens 
are usually referred to as \emph{half-edges} and the matching establishes a pairing between 
half-edges of one partition and half-edges of the other partition.

The configuration model produces a table in $\Theta(N)$ time, but may output
a \emph{non-binary} table, which would correspond to a bipartite graph with more than one edge between the 
same pair of vertices. 
Yet, given that the table generated is binary, the
output table is a \emph{uniform} sample from $\Omega_{\mathbf{r},\mathbf{c}}$.
In order to see this, note that the table does not change if we take the permutation and switch the position of two type-2 tokens corresponding to the same column or 
if we switch the position of two type-2 tokens that are matched to type-1 tokens from the same row.
Thus, there are exactly $\prod_{i=1}^m\prod_{j=1}^n r_i!c_j!$ different permutations of type-2 tokens for any given binary
contingency table. Since this number does not depend on the table, but only on the sequences $\mathbf{r}$ and $\mathbf{c}$, we obtain that any given 
binary contingency table has the same probability to be generated by the configuration model.
Therefore, if the probability that the configuration model outputs a \emph{binary}
table does not go to zero as $N\to\infty$, we obtain both an
\emph{exact} sampler for the uniform distribution on $\Omega _{\mathbf{r,c}}$ and,
as explained in Section~\ref{sec:sketch}, a randomized algorithm to
approximate $\left\vert \Omega _{\mathbf{r,c}}\right\vert $ that runs in
time $\Theta(N)$. We call such a running time optimal for uniform generation
on $\Omega _{\mathbf{r,c}}$ since it takes at least $N$ elements to encode a
given binary table.

We study the asymptotic behavior as $N\rightarrow \infty $ of the
probability that the configuration model generates a binary table. 
For this reason, we consider \emph{input sequences} $(\mathbf{r}(N),\mathbf{c}(N))_{N\geq 1}$,
where for each $N\geq 1$, $\mathbf{r}(N)$ and $\mathbf{c}(N)$ are vectors
of cardinality $m(N)$ and $n(N)$, respectively, and whose elements are
non-negative integers and satisfy $\sum_{i=1}^{m(N)}r_{i}(N)=\sum_{j=1}^{n(N)}c_{j}(N)=N$. 
We assume that $r_{1}(N)\geq r_{2}(N)\geq\cdots \geq r_{m(N)}(N)$ and $c_{1}(N)\geq c_{2}(N)\geq \cdots \geq c_{n(N)}(N)$ for all $N$ and 
allow $m(N)$ and $n(N)$ to go to $\infty$ with $N$. It will be convenient in our development to consider the vectors $\mathbf{r}(N)$
and $\mathbf{c}(N)$, for fixed $N$, as having infinite elements. To this end, we
set $r_{i}(N)=0$ for $i\geq m(N)+1$ and $c_{j}(N)=0$ for $j\geq n(N)+1$. 
Without loss of generality we assume that $r_{1}(N)\geq c_{1}(N)$ for all $N$. 
Furthermore, for each $i$ and $j$, we regard $(r_{i})_{N\geq 1}$ and $(c_{j})_{N\geq 1}$ as sequences in their own right. 
Finally, when taking subsequences of $(\mathbf{r}(N),\mathbf{c}(N))_{N\geq 1}$, it is useful to see the input sequence as 
a sequence of tuples $(\mathbf{r}(\ell),\mathbf{c}(\ell),N(\ell))_{\ell\geq 1}$ with $N(\ell)=\ell$ for all $\ell$. With this, we have that 
\begin{align}
   &\text{a subsequence of 
   $(\mathbf{r}(\ell),\mathbf{c}(\ell),N(\ell))_{\ell\geq 1}$ is a sequence of tuples 
   $(\mathbf{r}'(\ell'),\mathbf{c}'(\ell'),N'(\ell'))_{\ell'\geq 1}$} \nonumber \\
   &\text{for which there exist positive integers $k_1 > k_2 >\cdots$ so that $\mathbf{r}'(\ell')\equiv \mathbf{r}(k_{\ell'})$,}\nonumber \\
   &\text{$\mathbf{c}'(\ell')\equiv \mathbf{c}(k_{\ell'})$ and 
   $\tsum_{i=1}^\infty r_i'(\ell')=\tsum_{j=1}^\infty c_j'(\ell')=N'(\ell')=N(k_{\ell'})$.}
   \label{eq:subsequence}
\end{align}
For brevity, 
we shall drop any explicit dependence on $\ell$ and $N$ from our notation; e.g., we refer to the input sequence as $(\mathbf{r},\mathbf{c})_{N\geq 1}$ and 
use the full notation $(\mathbf{r}(\ell),\mathbf{c}(\ell),N(\ell))_\ell$ only when talking about subsequences or where ambiguity may arise.

Our main result characterizes the class of input sequences $(\mathbf{r},\mathbf{c})_{N}$ for which the configuration model takes $\Theta(N)$ time to
sample uniformly from $\Omega _{\mathbf{r,c}}$ and to approximate $%
\left\vert \Omega _{\mathbf{r,c}}\right\vert $ as $N\to\infty$. Note that if $c_{1}=1$, all
the tables satisfying $\mathbf{r}$ and $\mathbf{c}$ are binary, so not only
the configuration model generates only binary contingency tables, but also $%
|\Omega _{\mathbf{r},\mathbf{c}}|$ can be trivially obtained. Thus, we
assume that $r_{1}\geq c_{1}\geq 2$ for all $N$. 
% In order to have a more clear statement 
% for Theorem~\ref{thm:main} below, we
% will use the following technical assumptions: 
% \newcommand{\assr}{(A1)}
% \newcommand{\assc}{(A2)}
% \begin{enumerate}
%    \item[\assr] For each $i\geq 1$, $r_i$ is either $\Omega(N)$ or $o(N)$; 
%       i.e., $\liminf_{N\to\infty} \tfrac{r_i}{N} =\limsup_{N\to\infty} \tfrac{r_i}{N}$ for all $i\geq 1$.
%    \item[\assc] $\limsup_{N\to\infty} c_1 = \liminf_{N\to\infty} c_1$ (they may both be $\infty$).
% \end{enumerate}
% 
% We need these assumptions only in the proof of Lemma~\ref{lem:extra2} 
% for the analysis of the second part of condition~\ref{it:n2} in Theorem~\ref{thm:main} below. 
% Note that sequences that do not satisfy these mild assumptions are such that the asymptotic behavior of 
% either column $1$ or some row $i$ have substantial fluctuations that make their $\liminf$ differ
% strictly from their $\limsup$.
% We believe that such sequences represent
% pathological cases that are unlikely to appear in most practical and theoretical applications; 
% in fact, even if the sequences do not satisfy these assumptions, our results do not apply only when 
% the sequences of row and column sums are presented in a particularly correlated way.
% Incorporating these cases into Theorem~\ref{thm:main} would render the statement of 
% condition~\ref{it:n2} below much harder to understand.
% We discuss this further
% in Section~\ref{sec:assumptions}.

For any sequence of tuples $S=({\bf r}(\ell),{\bf c}(\ell),N(\ell))_{\ell\geq 1}$ we define 
$\kappa(S)$ to be the first row having sum $o(N)$;
more formally, 
\begin{equation}
   \kappa(S)=\min\{i\geq 1 \colon r_i(\ell)=o(N(\ell)) \text{ as $\ell\to\infty$}\}.
   \label{eq:kappa}
\end{equation}
We now state our main result, which gives necessary and sufficient conditions
for the optimality of the configuration model. 

\medskip
\begin{theorem}
\label{thm:main} Let $T$ be a table produced by the configuration model
given the input sequence $S=(\mathbf{r},\mathbf{c})_N$. % satisfying~\assr\ and~\assc\ above.
We have that $\pr{T\in \Omega _{\mathbf{r},\mathbf{c}}}=\Omega(1)$ as $N\to\infty$ if and only if the following two conditions hold:
\begin{enumerate}
\item $\sum _{i=1}^{m}\sum _{j=1}^{n}r_i(r_i-1)c_j(c_j-1)=O(N^2)$.\label%
{it:n1}

\item For every subsequence $S'=({\bf r}'(\ell'),{\bf c}'(\ell'),N'(\ell'))_{\ell'}$ of $S$ 
we have that either 
$$
   \limsup_{\ell'\to\infty}\sum _{i=\kappa(S')}^\infty \frac{r'_i(\ell')}{N'(\ell')}>0
   \quad\text{ or }\quad
   \limsup_{\ell'\to\infty} c'_1(\ell')<\kappa(S').
$$
\label{it:n2}
\end{enumerate}
\end{theorem}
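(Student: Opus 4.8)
The plan is to analyze the probability that the configuration model produces a binary table via the second-moment method, since the output is binary precisely when no two type-1 tokens from the same row get matched to two type-2 tokens from the same column (i.e.\ no ``double edge'' forms in the bipartite multigraph). For a fixed pair $(i,j)$ and a fixed pair of type-1 tokens from row $i$ together with a fixed pair of type-2 tokens from column $j$, the probability that these specific tokens form a double edge is $\Theta(1/N^2)$ under the uniform matching. Letting $X$ denote the total number of such double edges (summed over all rows, columns, and choices of two tokens on each side), linearity of expectation gives $\E{X}=\Theta\!\left(N^{-2}\sum_{i,j} r_i(r_i-1)c_j(c_j-1)\right)$. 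Condition~\ref{it:n1} is therefore exactly the statement that $\E{X}=O(1)$. Since $\pr{T\in\Omega_{\mathbf{r},\mathbf{c}}}=\pr{X=0}$ and $\pr{X>0}\leq\E{X}$ by Markov, when $\E{X}\to 0$ the probability is $1-o(1)$; the subtler regime is $\E{X}=\Theta(1)$, where I would need to show the number of double edges is asymptotically Poisson-like with a mean bounded away from $\infty$, so that $\pr{X=0}$ stays bounded away from $0$. This is where the second-moment / factorial-moment computation enters, and I expect establishing approximate independence (or a Chen--Stein bound) for the relevant indicator events to be delicate because of the global constraint imposed by the single random matching.

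For the necessity direction, I would argue the contrapositive: if Condition~\ref{it:n1} fails, then $\E{X}\to\infty$, and a variance bound (showing $\func{Var}(X)=o(\E{X}^2)$, again via factorial moments) forces $X>0$ with probability $1-o(1)$, so $\pr{T\in\Omega_{\mathbf{r},\mathbf{c}}}\to 0$. The real novelty is Condition~\ref{it:n2}, which is not captured by the first moment alone. The intuition is that $\kappa(S)$ marks the boundary between ``heavy'' rows of size $\Theta(N)$ and the remaining rows of size $o(N)$; there can be only a bounded number of heavy rows. If essentially all the mass $N$ is concentrated in the fewer-than-$\kappa(S)$ heavy rows while a column $c_1$ is large enough (at least $\kappa(S)$), then that tall column must place two of its type-2 tokens into the same heavy row with probability bounded away from $0$, creating an unavoidable double edge even though the first moment $\E{X}$ may stay $O(1)$. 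Condition~\ref{it:n2} rules out exactly this obstruction along every subsequence.

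Concretely, for necessity of Condition~\ref{it:n2} I would pass to a subsequence $S'$ violating the dichotomy, so that simultaneously $\sum_{i\geq\kappa(S')} r_i'/N'\to 0$ (the light rows carry vanishing mass, hence almost all mass sits in the $\kappa(S')-1$ heavy rows) and $c_1'\geq\kappa(S')$ infinitely often (the tallest column has at least $\kappa(S')$ tokens). Conditioning on the matching restricted to the tallest column's $c_1'$ type-2 tokens, each is matched to a type-1 token drawn essentially uniformly by mass; since a $1-o(1)$ fraction of that mass lies in only $\kappa(S')-1$ heavy rows and the column has at least $\kappa(S')$ tokens, a pigeonhole-plus-birthday argument shows two of these tokens land in the same heavy row with probability bounded away from $0$, producing a double edge and forcing $\pr{T\in\Omega_{\mathbf{r},\mathbf{c}}}\to 0$ along the subsequence. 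For sufficiency, assuming both conditions hold along every subsequence, I would combine the Poisson-type control of $X$ from Condition~\ref{it:n1} with the structural guarantee from Condition~\ref{it:n2} that the heavy-row/tall-column collisions contribute only $O(1)$ to the mean and do not concentrate.

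The main obstacle, I expect, will be handling the dependence induced by the single global matching in the second-moment and Poisson-approximation steps, particularly in the borderline regime $\E{X}=\Theta(1)$ where I must show $\pr{X=0}$ is bounded away from both $0$ and $1$; a clean Chen--Stein coupling, or an explicit factorial-moment computation showing the double edges behave asymptotically like a Poisson process, seems to be the technical heart. A secondary subtlety is the \emph{subsequence} quantifier in Condition~\ref{it:n2}: because $\kappa(S')$ can differ between $S$ and a subsequence $S'$ (taking a subsequence may turn a row that was $o(N)$ along $S$ into one of size $\Theta(N)$ along $S'$, or vice versa), the dichotomy must be verified uniformly, and I would need to argue that failure of the condition can always be witnessed along some subsequence where $\kappa$ stabilizes, so that the pigeonhole obstruction is genuinely forced.
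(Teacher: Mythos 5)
Your high-level architecture matches the paper's: condition~\ref{it:n1} is exactly $\E{X}=O(1)$ for the double-edge count, its necessity follows from Chebyshev once $\func{Var}(X)=o(\E{X}^2)$ is checked, and the necessity of condition~\ref{it:n2} comes from a tall column being forced into too few heavy rows. However, there are two genuine gaps. The larger one is the sufficiency direction, which is where essentially all the work lies. A Poisson/Chen--Stein approximation for the \emph{global} count $X$ is false in the relevant regime: take $r_1=N-o(N)$, $c_1=2$, $c_j=1$ otherwise; then $\E{X}\to 1$ but $\pr{X\geq 1}\to 1$, so $\pr{X=0}\not\to e^{-1}$. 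The same failure occurs for any entry with $(r_i-1)(c_j-1)=\Omega(N)$, where a single indicator carries constant probability and the factorial moments do not match a Poisson law. The paper's resolution, absent from your plan, is to split $I$ into $I_{\mathrm{L}}=\{(i,j):(r_i-1)(c_j-1)\geq\epsilon N\}$ and its complement, show $|I_{\mathrm{L}}|=O(1)$ and that the \emph{stronger} event $W_{\mathrm{L}}=0$ (all of $I_{\mathrm{L}}$ empty, not merely binary) has probability $\Omega(1)$ when $r_1=o(N)$, and only then run inclusion--exclusion on $I_{\mathrm{S}}$ conditioned on $W_{\mathrm{L}}=0$, which requires a separate lemma showing this conditioning perturbs matching probabilities by $1+o(1)$. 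When $r_1=\Omega(N)$ your plan says to ``combine'' the Poisson control with condition~\ref{it:n2}, but gives no mechanism; the paper instead peels off the $\Omega(N)$ rows (showing they can avoid the tall columns, or be hit at most once each, with probability $\Omega(1)$) and reduces to the $r_1=o(N)$ case, gluing the regimes with a subsubsequence principle. Without some version of this decomposition your sufficiency argument does not go through.

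The second gap is in your necessity argument for condition~\ref{it:n2}: you conclude $\pr{T\in\Omega_{\mathbf{r},\mathbf{c}}}\to 0$ along the bad subsequence from the claim that two tokens of the tall column collide in a heavy row ``with probability bounded away from $0$.'' That only yields $\pr{T\in\Omega_{\mathbf{r},\mathbf{c}}}\leq 1-\delta$, which does not contradict $\Omega(1)$. The correct (and easily available) statement is stronger: since the rows $i\geq\kappa(S')$ carry total mass $o(N)$ and $c_1'=O(1)$, \emph{all} $c_1'$ tokens of the first column land in the $\kappa(S')-1$ heavy rows with probability $1-o(1)$, and then the pigeonhole collision is deterministic, giving $\pr{T\in\Omega_{\mathbf{r}',\mathbf{c}'}}\to 0$ as required. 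Your final remark about $\kappa$ changing under subsequences is a real issue and is handled in the paper by the monotonicity $\kappa(S)\geq\kappa(S')$ for subsequences together with passing to a further subsequence on which the heavy rows are genuinely $\Omega(N)$ and $\limsup c_1$ is attained; you would need to make that reduction explicit.
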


\medskip

\begin{remark}
\rm{
   We point out that $\kappa(S)$ may be $\infty$ (e.g., consider $r_i=\left\lfloor\tfrac{N}{2^i}\right\rfloor$ for all $i\in \{1,2,\ldots,\log_2 N\}$ and 
   $r_i=1$ for as many values of $i>\log_2N$ as needed to make the sum of the $r_i$'s equal to $N$). 
   In this case, we define $\sum _{i=\kappa(S)}^\infty \frac{r_i}{N}=0$.
%    In this case, from condition~\ref{it:n1} of Theorem~\ref{thm:main},
%    we have that $c_1=O(1)$ and the sum in condition~\ref{it:n2} of Theorem~\ref{thm:main} is equal to $0$.
}
\end{remark}

\begin{remark}
   \rm{
   When $r_1=o(N)$, condition~\ref{it:n2} above is always satisfied 
   since $\kappa(S')=1$ and $\sum_{i=\kappa(S')}^\infty r'_i(\ell')=N'(\ell')$ for every subsequence $S'=({\bf r}'(\ell'),{\bf c}'(\ell'),N'(\ell'))_{\ell'}$
   of $S$. Thus, condition~\ref{it:n1} is 
   both necessary and sufficient.
   }
\end{remark}

\begin{remark}
   \rm{
   Note that, for the practically relevant case where $(r_i)_N$ and $(c_j)_N$ are non-decreasing with $N$ for all $i$ and $j$, 
   condition~\ref{it:n2} can be replaced by the simpler condition 
   $$
   \sum _{i=\kappa(S)}^\infty r_i=\Omega(N)
   \quad\text{ or }\quad
   \lim_{N\to\infty} c_1<\kappa(S),
   $$
   which concerns only the sequence $({\bf r},{\bf c})_N$ and not every subsequence of~$({\bf r},{\bf c})_N$.
   }
\end{remark}
\begin{remark}
   \rm{
   Note that conditions~\ref{it:n1}~and~\ref{it:n2} 
   are \emph{not} redundant. 
   For instance, for any sequence $S=(\mathbf{r},\mathbf{c})_N$ with $r_1=N-o(N)$, $c_1=2$ and $c_j=1$ for all $j= 2,3,\ldots,N-1$, we have
   $\kappa(S) =2$, which violates condition~\ref{it:n2}, though
   condition~\ref{it:n1} holds. 
   }
\end{remark}
\medskip

%A substantial discussion on how our main result relates 
%to existing work is given in Section~\ref{sec:related}
%We give the intuition behind 
%conditions~\ref{it:n1} and~\ref{it:n2} above in Section~\ref{sec:sketch} since we will
%need to introduce some notation.

Our theoretical developments are partly driven by our desire to guide
practitioners in areas of applied statistics who often deal with hypothesis
testing involving graphical models and binary contingency tables (see for
instance~\cite{besag1989} and \cite{chen2005}). In these types of settings,
data is encoded in the form of a binary table and one is interested in
studying the null hypothesis that row and column sums are sufficient
statistics for determining the distribution of all the entries in the table.
To test this hypothesis statisticians compare the value of a given statistic
of the observed table (e.g., the sum of the hamming distances of pairs of
rows) with values generated by sampling tables under the distribution
induced by the null hypothesis, which is precisely the uniform distribution
on binary contingency tables with prescribed row and column sums. For its simplicity and
small running time, the configuration model is a very appealing algorithm to be used 
in this setting. Our Theorem~\ref{thm:main} above fully characterizes the sequences
${\bf r}$ and ${\bf c}$
for which the configuration model is a fast and
reliable algorithm for uniform generation of binary contingency tables.

The configuration model is by now a classical, well-known algorithm that has 
been applied in practice, as described above, and also in more theoretical
settings.
For example, some asymptotic estimates for $|\Omega_{{\bf r},{\bf c}}|$ (e.g.,~\cite{mckay1984} 
and~\cite{greenhill2006}) are obtained via analyses of the configuration model.
Some results on the
structural properties of graphs obtained uniformly at random
from $\Omega_{{\bf r},{\bf c}}$ also use the configuration model
(e.g.,~\cite[Chapter~9]{janson2000} and~\cite{greenhill2008b}).
Usually, it is easier to analyze a graph obtained via the configuration model than
a random sample from $\Omega_{{\bf r},{\bf c}}$, and it is important to know whether 
results for one model can be carried over to the other. 
In order to explain how our results apply to this type of questions, 
let $A$ be any property
that can be tested for a bipartite graph (e.g., $A$ can be the property that the graph has a connected component with 
a constant fraction of the vertices, which is the property studied in~\cite{greenhill2008b}). If the conditions 
in Theorem~\ref{thm:main} hold, then any property $A$ that holds with probability $1-o(1)$ for the configuration model 
also holds with probability $1-o(1)$ for a graph obtained uniformly at random from $\Omega_{{\bf r},{\bf c}}$. 
This corresponds to the notion of contiguity between probability measures, 
which is more thoroughly explained in~\cite[Chapter~9]{janson2000}.
The corollary below gives an application of our results. 
We remark that this can only be obtained since the configuration model
is an \emph{exact} sampler for the uniform distribution over $\Omega_{{\bf r},{\bf c}}$.

\begin{corollary}\label{cor:contiguity}
   Let $A$ be a property that can be tested for a bipartite graph. Let $p(A)$ be the probability that a graph obtained 
   uniformly at random from $\Omega_{{\bf r},{\bf c}}$ contains property $A$, and $p'(A)$ be the probability that 
   a graph obtained via the configuration model given ${\bf r}$ and ${\bf c}$ contains property $A$. If 
   conditions~\ref{it:n1} and~\ref{it:n2} in Theorem~\ref{thm:main} are satisfied and $p'(A)=1-o(1)$, then $p(A)=1-o(1)$.
\end{corollary}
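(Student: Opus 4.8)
The plan is to reduce the claim to a short conditioning argument built on two facts already in hand: first, that conditioned on outputting a binary table the configuration model is an \emph{exact} uniform sampler on $\Omega_{\mathbf{r},\mathbf{c}}$ (as established in Section~\ref{sec:intro}); and second, that under conditions~\ref{it:n1} and~\ref{it:n2}, Theorem~\ref{thm:main} guarantees the conditioning event has probability bounded away from $0$ as $N\to\infty$.

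Concretely, I would let $T$ denote the output of the configuration model and write $B=\{T\in\Omega_{\mathbf{r},\mathbf{c}}\}$ for the event that $T$ is binary. Since conditions~\ref{it:n1} and~\ref{it:n2} hold, Theorem~\ref{thm:main} gives a constant $\delta>0$ with $\pr{B}\geq\delta$ for all sufficiently large $N$. Writing $\bar A$ for the event that the sampled graph fails to have property $A$, the exactness of the sampler yields $p(A)=\pr{T\text{ has }A\mid B}$, and hence $1-p(A)=\pr{\bar A\mid B}$, while unconditionally $1-p'(A)=\pr{T\text{ has }\bar A}$.

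The final step is the inequality
\[
   1-p'(A)=\pr{T\text{ has }\bar A}\;\geq\;\pr{T\text{ has }\bar A,\,B}=\pr{\bar A\mid B}\,\pr{B}\;\geq\;\delta\,(1-p(A)),
\]
so that $1-p(A)\leq\delta^{-1}\bigl(1-p'(A)\bigr)=\delta^{-1}o(1)=o(1)$, which is precisely $p(A)=1-o(1)$.

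There is no genuine technical obstacle here: the entire difficulty has been absorbed into Theorem~\ref{thm:main}, and the corollary is immediate once that theorem and the exactness observation are available. The only point worth flagging is directional—establishing this one-sided transfer of high-probability properties from the configuration model to the uniform measure requires only the lower bound $\pr{B}=\Omega(1)$ and not a matching upper bound, which is exactly why the $\Omega(1)$ conclusion of Theorem~\ref{thm:main}, rather than a sharper estimate of $\pr{B}$, already suffices.
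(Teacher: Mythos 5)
Your proof is correct and follows essentially the same route as the paper: both use the exactness of the conditional sampler together with the $\Omega(1)$ lower bound on the probability $\rho$ of producing a binary table from Theorem~\ref{thm:main}, and your chain of inequalities is just the expanded form of the paper's one-line bound $p(A)\geq 1-\frac{1-p'(A)}{\rho}$. Nothing is missing.
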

\begin{proof}
   Let $\rho$ be the probability that the configuration model outputs a binary table. Note that $\rho=\Omega(1)$ if 
   conditions~\ref{it:n1} and~\ref{it:n2} in Theorem~\ref{thm:main} are satisfied. Since the configuration model
   is an exact sampler for the uniform distribution over $\Omega_{{\bf r},{\bf c}}$, we obtain
   $p(A)\geq 1-\frac{1-p'(A)}{\rho}=1-o(1)$.
\end{proof}

% \topic{Related Work}
\section{Related Work}\label{sec:related}
Theorem~\ref{thm:main} can be seen as an extension of recent work by Janson~%
\cite{janson2009}, who studied the probability that the configuration model
generates a binary \textit{symmetric} table. Letting $\Omega _{\mathbf{r}%
}^{\prime }$ be the set of all binary \textit{symmetric} tables with row and
column sums given by $\mathbf{r}$, \cite[Theorem 1.1]{janson2009}
establishes that $\pr{T\in \Omega _{%
\mathbf{r}}^{\prime }}=\Omega(1)$ if and only if $\sum_{i=1}^{m}r_{i}^{2}=O(N)$.

To contrast Janson's result to the case of non-symmetric tables studied
here, note that $(\mathbf{r},\mathbf{c})_{N}$ satisfying conditions~\ref%
{it:n1} and~\ref{it:n2} give rise to a much wider class of behavior than in
the symmetric case. For instance, the apparently similar conditions $%
\sum_{i=1}^{m}\sum_{j=1}^{n}r_{i}(r_{i}-1)c_{j}(c_{j}-1)=O(N^{2})$ and $%
\sum_{i=1}^{m}\sum_{j=1}^{n}r_{i}^{2}c_{j}^{2}=O(N^{2})$ are far from
identical; if $\mathbf{c}=\{2,1,1,\ldots ,1\}$, then the former condition is
satisfied regardless of $\mathbf{r}$ while the latter may not hold. 
Besides, the condition 
$\sum_{i=1}^{m}r_{i}^{2}=O(N)$ for symmetric tables allows $r_1$ to 
grow only as $O(\sqrt{N})$, whereas our Theorem~\ref{thm:main} 
reveals that there are sequences with $r_1$ as large as $N-o(N)$ for which
the configuration model produces a binary table with probability $\Omega(1)$. 
Therefore, the growth behavior allowed for $r_{1}$ in Theorem~\ref%
{thm:main} as $N\rightarrow \infty $ is much wider than in the symmetric
case. This
wider type of growth behavior makes the analysis for the non-symmetric case
qualitatively different. Moreover, our proof techniques are completely
different from those employed by Janson and
reveal some structural properties of the tables generated with
the configuration model. For
example, we show that conditioning on the entries with relatively large row
and column sums being binary, the probability that there is an entry with
value larger than $2$ is tiny (see Lemma~\ref{lem:truep3}). 
We believe that our techniques can be exploited in the analysis of related
problems (such as efficient sampling of non-binary contingency tables).

Polynomial-time algorithms have been developed
for the problem of approximating $|\Omega _{\mathbf{r},\mathbf{c}}|$.
In fact, approximating $|\Omega _{\mathbf{r},\mathbf{c}}|$
can be reduced to the problem of computing the permanent of a binary $\ell
\times \ell $ matrix with $\ell =\Theta (mn)$; a problem that enjoys a
notable history and place in the theory of computation. Valiant~\cite%
{valiant1979} showed that computing the permanent belongs to the class of
\#P-complete problems, for which proving the existence of a polynomial-time
algorithm would have extensive implications in complexity theory. It is
still an open problem, however, to verify whether counting the number of
binary contingency tables is \#P-complete, though the more general
problem of counting the number of (not necessarily binary) contingency
tables has been shown to be \#P-complete by Dyer et al.~\cite{dyer1997}. The
ground-breaking work of Jerrum et al.~\cite{jerrum2004} provided the first
Fully Polynomial Randomized Approximation Scheme (FPRAS)~\cite%
{mitzenmacher2005} to compute the permanent of a binary matrix. B\'{e}zakov%
\'{a} et al.~\cite{bezakova2008} used simulated annealing techniques to
develop an asymptotically faster algorithm to approximate the permanent,
which runs in $O(\ell ^{7}\log ^{4}\ell )$ time for an $\ell \times \ell $
matrix. In another paper, B\'{e}zakov\'{a} et al.~\cite{bezakova2007}
developed an algorithm that works directly with contingency tables. Their
algorithm for approximately sampling binary tables runs in $%
O(m^{2}n^{2}N^{3}\Delta \log ^{5}(m+n))$ time, where $\Delta $ is the
maximum over all row and column sums.

Although these algorithms are proved to run in polynomial time for all $%
\mathbf{r}$ and $\mathbf{c}$,
their efficiency is far from being useful in
the types of applications described at the end of Section~\ref{sec:intro}.
For this reason, other approaches to uniformly sampling and counting binary
contingency tables have been proposed. Chen et al.~\cite{chen2005} developed
a sequential importance sampling algorithm to count the number of
contingency tables. Their algorithm applies a heuristic construction and has
been observed to perform well in practice, 
%but no polynomial bounds to its
%running time are possible to be obtained for arbitrary input sequences of row and
%column sums. 
%Indeed, 
but B\'{e}zakov\'{a} et al.~\cite{bezakova2006} proved that
there exist $\mathbf{r}$ and $\mathbf{c}$ such that the heuristic of Chen et
al.~\cite{chen2005} underestimates the number of binary contingency tables
by an exponential factor unless the algorithm is run for an exponential
amount of time. On the other hand, Blanchet~\cite{blanchet2009} provided a
rigorous analysis of the heuristic of Chen et al.~\cite{chen2005} and showed
that if $r_1=o(\sqrt{N})$, $\sum_{i=1}^{m}r_{i}^{2}=O(N)$, and 
$c_1=O(1)$, then this approach yields a FPRAS for counting
binary contingency tables with running time $O(N^{3})$. Our Theorem~\ref%
{thm:main} significantly weakens the assumptions in~\cite{blanchet2009}, and
drastically improves upon the running time of all the aforementioned
algorithms.

In a different direction, much effort has been made to derive asymptotics 
for $|\Omega _{\mathbf{r},\mathbf{c}}|$. 
The first result to allow the row and column sums to grow with $N$ is the one by O'Neil~\cite{oneil1969},
which is restricted to the case $n=m$ and $r_1=O(\log^{1/4-\epsilon}n)$ for any constant $\epsilon>0$.
Later, McKay~\cite{mckay1984} considered the case $r_1=o(N^{1/4})$ and derived the first asymptotics 
for $|\Omega _{\mathbf{r},\mathbf{c}}|$
to allow $r_1$ to 
grow polynomially with $N$. Currently, the asymptotics for \emph{sparse} binary tables that 
allows the largest range for $\mathbf{r}$ and $\mathbf{c}$ is the one by 
Greenhill et al.~\cite{greenhill2006} for the 
case $r_1c_1 = o(N^{2/3})$. 
These results by McKay~\cite{mckay1984} and Greenhill et al.~\cite{greenhill2006} were obtained 
using the 
configuration model as a part of their proof technique. Similarly,
the work of Blanchet discussed 
above~\cite{blanchet2009} also uses the configuration model, as well as McKay's estimator~\cite{mckay1984},
to analyze the heuristics of Chen et al.~\cite{chen2005}. 
Using different techniques, Canfield et al.~\cite{canfield2008} derived asymptotics for \emph{dense}
binary tables, and Barvinok~\cite{barvinok2010} derived general lower and upper bounds for 
$|\Omega _{\mathbf{r},\mathbf{c}}|$ that 
are within a factor $(mn)^{\Theta(m+n)}$ from each other.
For binary \textit{symmetric} tables, besides the work of Janson~\cite{janson2009}
cited above, we highlight the work of 
Bayati et al.~\cite{bayati2007}, who developed an
algorithm that generates a symmetric table almost uniformly at random in
time $O(r_{1}N)$ as long as $r_{1}=c_{1}=O(N^{1/4-\epsilon })$ for any
constant $\epsilon >0$. Their analysis gives an alternative proof of a result originally derived by McKay~\cite{mckay1985}.

We remark that, to the best of our knowledge, none of the existing asymptotics for $|\Omega _{\mathbf{r},\mathbf{c}}|$
applies to the whole of the spectrum of sequences $\mathbf{r}$ and $\mathbf{c}$ that satisfy 
conditions~\ref{it:n1} and~\ref{it:n2} in our Theorem~\ref{thm:main}.
Furthermore, most of the known asymptotics take advantage of the configuration model
in a fundamental way. Since our result fully characterizes the sequences for which the configuration model 
is contiguous to the uniform distribution, our conditions shed light into the whole spectrum of sequences for which 
analytical estimators might be obtained by directly applying the configuration model.

Under the conditions of Theorem~\ref{thm:main},
the configuration model gives a FPRAS for approximating $|\Omega _{\mathbf{r},\mathbf{c}}|$; thus
it approximates $|\Omega _{\mathbf{r},\mathbf{c}}|$ to a precision of the form 
$1+O(N^{-c})$, for an arbitrarily large constant $c>0$\footnote{We remark that under the conditions of Theorem~\ref{thm:main}
the configuration model approximates $|\Omega _{\mathbf{r},\mathbf{c}}|$ to a precision of the form $1\pm\epsilon$ for 
any \emph{constant} $\epsilon>0$ in time $\Theta(N)$, but can approximate $|\Omega _{\mathbf{r},\mathbf{c}}|$ to a precision
$1+ O(N^{-c})$ for an arbitrary constant $c>0$ in polynomial time.}, whereas asymptotics for 
$|\Omega _{\mathbf{r},\mathbf{c}}|$ have fixed 
precision. 
%Besides, the configuration model also gives an \emph{exactly uniform} sampling procedure for 
%tables in $\Omega_{\mathbf{r},\mathbf{c}}$ that has optimal $\Theta(N)$ time complexity.
We remark that asymptotics for $|\Omega_{\mathbf{r},\mathbf{c}}|$  
can also be used to produce an \emph{almost} uniform sampling procedure for binary contingency tables.
Sinclair and Jerrum~\cite{sinclair1989} showed that for any self-reducible problem\footnote{Informally, a problem 
is self-reducible if it can be split in parts where each part is itself a smaller instance of the 
same problem. In the case of sampling binary contingency tables, after generating all the entries of a given column,
we can update the row and column sums properly so that generating the remaining entries translates to sampling 
a binary contingency table with different row and column sums.}
an asymptotic approximation with at least constant 
precision can be used to produce an almost uniform sampling procedure.
However, the running time of the sampling procedure 
depends on the mixing time of a Markov chain, 
which not only may be challenging to obtain precisely but also is usually too large for many practical applications. 
Moreover, we remark that this technique
cannot be directly employed with 
the current asymptotics for $|\Omega _{\mathbf{r},\mathbf{c}}|$ since they impose some conditions on $\mathbf{r}$ and 
$\mathbf{c}$. Under these conditions, the problem of sampling binary contingency tables is not guaranteed to be self-reducible: 
when splitting the table into smaller tables, we do not necessarily obtain that the new row and column sums satisfy the conditions 
of the asymptotic results. 

%===============================================================================
%===============================================================================
%===============================================================================
\section{Preliminaries}
\label{sec:sketch}

As mentioned in Section~\ref{sec:intro}, we use the configuration model to
generate a contingency table $T$ (not necessarily binary). There are $N!$
possible matchings among the tokens, but any given \textit{binary}
contingency table generated by the configuration model corresponds to $%
\prod_{i=1}^{m}\prod_{j=1}^{n}r_{i}!c_{j}!$ such matchings, since permuting
the tokens within each row or column does not change the final table.
Therefore, we can conclude that $|\Omega _{\mathbf{r},\mathbf{c}%
}|\prod_{i=1}^{m}\prod_{j=1}^{n}r_{i}!c_{j}!=\pr{T\in \Omega _{\mathbf{r},\mathbf{c}}}N!$, 
and the problem of computing $|\Omega _{\mathbf{r},\mathbf{c}}|$ is equivalent to evaluating 
$\pr{T\in \Omega _{\mathbf{r},\mathbf{c}}}$.

If $\pr{T\in \Omega _{\mathbf{r},\mathbf{c}}}=\Omega(1)$,
then we obtain a Fully Polynomial Randomized Approximation Scheme (FPRAS)
for estimating $|\Omega _{\mathbf{r},\mathbf{c}}|$ as follows (we refer the
reader to \cite{mitzenmacher2005} for more information on FPRAS). Generate a
sequence of independent contingency tables using the configuration model and
output the fraction of the tables that turn out to be binary. If $%
\pr{T\in \Omega _{\mathbf{r},\mathbf{c}}}=\Omega(1)$, for
any constants $\delta >0$ and $\epsilon >0$, it suffices to generate a
constant (depending polynomially on $\epsilon ^{-1}$ and $\log \delta ^{-1}$%
) number of tables such that with probability $1-\delta $ our estimator to $%
|\Omega _{\mathbf{r},\mathbf{c}}|$ has precision $1\pm \epsilon $.

%Before discussing our strategy for the proof of Theorem~\ref{thm:main},
%we need to introduce some notation. 
We conclude this section by introducing fundamental notation that we will use in the proof.
Let $I$ be the index set $[1,m]\times \lbrack 1,n]$ and $T=(T_{i,j})_{(i,j)%
\in I}$ be a table generated by the configuration model. 
Let $Z$ be the number of non-binary entries of $T$, so $\pr%
{T\in \Omega _{\mathbf{r},\mathbf{c}}}=\pr{Z=0}$. Given two integers $%
k\geq 0$ and $x\geq 0$, we define $x^{\underline{k}}=\frac{x!}{(x-k)!}$.
Recall that the configuration model generates a table by taking a random
matching between type-1 and type-2 tokens. We assume that each token is
individually labeled and refer to a single pair of a type-1 and a type-2
token as an \textit{edge}. We say that an edge is matched by the
configuration model if the corresponding tokens are matched. A set of two
edges for the same entry is referred to as a \textit{double edge}. For $%
(i,j)\in I$, let ${\mathcal{B}}_{2}(i,j)$ be the set of all possible double
edges that can be matched for the entry $(i,j)$. An element of ${\mathcal{B}}%
_{2}(i,j)$ has the form $\{e_{1},e_{2}\} $, where $e_{1}$ and $e_{2}$ are
disjoint edges for the entry $(i,j)$, that is, $e_{1}$ and $e_{2}$
correspond to $4$ distinct tokens, $2$ type-1 tokens from row $i$ and $2$
type-2 tokens from column $j$. Clearly, the cardinality of ${\mathcal{B}}%
_{2}(i,j)$ is given by $|{\mathcal{B}}_{2}(i,j)|=r_{i}^{\underline{2}%
}c_{j}^{\underline{2}}/2!$. For any $(i,j)\in I$ and $B\in {\mathcal{B}}%
_{2}(i,j)$, let ${\mathcal{M}}(B)$ be the event that the double edge
represented by $B$ is matched by the configuration model. Note that given
any specific $B\in {\mathcal{B}}_{2}(i,j)$, $\pr{{\mathcal{M}}%
(B)}=1/N^{\underline{2}}$. With this notation, note that the event $\{Z\geq
1\}$ is equivalent to $\{\bigcup_{B\in {\mathcal{B}}_{2}}{\mathcal{M}}(B)\}$.

%===============================================================================
%===============================================================================
%===============================================================================
\section{Proof of Theorem~\ref{thm:main}}
The proof of Theorem~\ref{thm:main} follows from the three propositions below, which
we will prove in subsequent sections. 
% We suggest the reader to ignore the assumptions~\assr\ and~\assc\ until the very end of the 
% proof of Theorem~\ref{thm:main} (more specifically until the proof of Lemma~\ref{lem:extra2}). 
% We could have applied the assumptions to slightly simplify some other parts of the proof; however, we opted not to do so 
% to emphasize exactly the places where the assumption must be used. 
The first proposition, Proposition~\ref{prop:nec}, shows that 
condition~\ref{it:n1} is necessary; its proof is given 
in Section~\ref{sec:nec}.

\begin{proposition}%[Condition~\ref{it:n1} is necessary]
\label{prop:nec} As $N\to\infty$, if $\sum_{(i,j)\in I}r_{i}(r_{i}-1)c_{j}(c_{j}-1)$ is not $O(N^2)$, then 
$\pr{T\in \Omega _{\mathbf{r},\mathbf{c}}}$ is not $\Omega(1)$.
\end{proposition}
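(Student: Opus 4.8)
The plan is to run a second-moment argument on the number of matched double edges. Let $\mathcal{B}_2=\bigcup_{(i,j)\in I}\mathcal{B}_2(i,j)$ and let $W=\sum_{B\in\mathcal{B}_2}\mathbf{1}_{\mathcal{M}(B)}$ count the double edges matched by the configuration model. Since an entry is non-binary precisely when it carries at least one matched double edge, $\{Z=0\}=\{W=0\}$, so it suffices to exhibit a subsequence of $N$ along which $\pr{W=0}\to 0$. Abbreviate $R=\sum_i r_i^{\underline{2}}$ and $C=\sum_j c_j^{\underline{2}}$, so that the hypothesis quantity factorizes as $\sum_{(i,j)\in I}r_i(r_i-1)c_j(c_j-1)=RC$. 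Because $\pr{\mathcal{M}(B)}=1/N^{\underline{2}}$ for every $B$ and $|\mathcal{B}_2|=RC/2$, we have $\E{W}=RC/(2N^{\underline{2}})$. If $RC$ is not $O(N^2)$ then $\limsup_N RC/N^2=\infty$, so I would fix a subsequence along which $RC/N^2\to\infty$; on it $\E{W}\to\infty$, and the goal becomes to show $\pr{W=0}\to 0$ there.

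Next I would bound $\operatorname{Var}(W)$ by expanding $\E{W^2}=\sum_{B,B'}\pr{\mathcal{M}(B)\cap\mathcal{M}(B')}$ over ordered pairs. The crucial point is that $\pr{\mathcal{M}(B)\cap\mathcal{M}(B')}>0$ only when the edges of $B$ and $B'$ form a consistent partial matching: the pair is either token-disjoint (all eight tokens distinct), contributing $1/N^{\underline{4}}$; or it shares exactly one full edge, which forces $B$ and $B'$ into a common entry $(i,j)$ as three token-disjoint edges and contributes $1/N^{\underline{3}}$; or $B=B'$, contributing $1/N^{\underline{2}}$; every remaining pair contributes $0$. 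A direct count gives $RC/2$ coincident pairs, exactly $\sum_{(i,j)}r_i^{\underline{3}}c_j^{\underline{3}}=R_3C_3$ pairs of the one-shared-edge type (with $R_3=\sum_i r_i^{\underline{3}}$, $C_3=\sum_j c_j^{\underline{3}}$), and at most $(RC/2)^2$ token-disjoint pairs. Subtracting $(\E{W})^2$, the coincident pairs contribute at most $\E{W}$, the token-disjoint pairs exceed $(\E{W})^2$ by only $O\big((\E{W})^2/N\big)$ (since $1/N^{\underline{4}}-1/(N^{\underline{2}})^2=O(1/N^5)$), and the one-shared-edge pairs contribute at most $R_3C_3/N^{\underline{3}}$. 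Hence $\operatorname{Var}(W)\le \E{W}+o\big((\E{W})^2\big)+R_3C_3/N^{\underline{3}}$.

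The hard part is to show the correlation term is negligible, namely $R_3C_3/N^{\underline{3}}=o\big((\E{W})^2\big)$, i.e.\ $NR_3C_3/(R^2C^2)\to 0$ along the subsequence. I would control it with two complementary estimates: from $r_i^{\underline{3}}\le r_1 r_i^{\underline{2}}$ (and its column analogue) one gets $R_3C_3\le r_1c_1\,RC$, hence $NR_3C_3/(R^2C^2)\le r_1c_1N/(RC)$; and from $R\ge r_1^{\underline{2}}$, $C\ge c_1^{\underline{2}}$ one gets $r_1c_1N/(RC)\le N/\big((r_1-1)(c_1-1)\big)$. Now pass to any sub-subsequence along which $r_1c_1/N$ has a limit in $[0,\infty]$. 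If the limit is finite, the first estimate gives $r_1c_1N/(RC)=O(1)\cdot N^2/(RC)\to 0$ because $RC/N^2\to\infty$; if the limit is $\infty$, then using $r_1\ge c_1\ge 2$ (so $(r_1-1)(c_1-1)\ge \tfrac14 r_1c_1$) the second estimate gives $N/\big((r_1-1)(c_1-1)\big)\le 4N/(r_1c_1)\to 0$. Either way the correlation term vanishes, so $\operatorname{Var}(W)=o\big((\E{W})^2\big)+O(\E{W})$. Chebyshev's inequality then yields $\pr{W=0}\le \operatorname{Var}(W)/(\E{W})^2\le 1/\E{W}+o(1)\to 0$ along the subsequence, so $\pr{T\in\Omega_{\mathbf{r},\mathbf{c}}}=\pr{Z=0}$ is not bounded away from $0$, i.e.\ not $\Omega(1)$.
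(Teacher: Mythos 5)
Your proposal is correct and follows essentially the same route as the paper: a second-moment/Chebyshev argument on the number of matched double edges, with the variance decomposed according to how the two double edges overlap (identical, sharing one edge within the same entry, token-disjoint, or inconsistent and hence probability zero). The only real divergence is in bounding the shared-edge cross term: the paper uses the pointwise estimate $(r_i-2)(c_j-2)\le\sqrt{r_i^{\underline{2}}c_j^{\underline{2}}}\le\sqrt{2\mu(N)N^{\underline{2}}}$ to get $\operatorname{Var}(F)=\mu(N)\,O\bigl(\sqrt{\mu(N)}+\mu(N)/N\bigr)$ and the explicit uniform rate $\pr{F=0}=O(1/\sqrt{\mu(N)}+1/N)$, whereas your factorized bound $R_3C_3\le r_1c_1RC$ together with the dichotomy on $r_1c_1/N$ (passing to sub-subsequences) reaches the same conclusion with a bit more case analysis and without a quantitative rate; both are valid.
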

\medskip

The proof of Proposition~\ref{prop:nec} highlights the importance of the definition of \textit{double edges},
since condition~\ref{it:n1} in Theorem~\ref{thm:main} translates
to the expected number of double edges in $T$ being uniformly bounded over $%
N$. Note that for the case of symmetric tables, condition~\ref{it:n1} is both necessary and 
sufficient, while for the non-symmetric case it is just necessary. 
Now, we assume that $r_{1}=o(N)$ and
show in Proposition~\ref{prop:suf} that, in this case, condition~\ref{it:n1} in Theorem~\ref{thm:main} is
also sufficient. The proof of Proposition~\ref{prop:suf} is presented in Section~\ref{sec:suf}.

\begin{proposition}%[Condition~\ref{it:n1} is sufficient when $r=o(N)$]
\label{prop:suf} As $N\to\infty$, if $\sum_{(i,j)\in I}r_{i}(r_{i}-1)c_{j}(c_{j}-1)=O(N^{2})$
and $r_{1}=o(N)$ then 
$\pr{T\in \Omega_{\mathbf{r},\mathbf{c}}}=\Omega(1)$.
\end{proposition}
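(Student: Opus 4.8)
The plan is to work with the number $W$ of double edges rather than with $Z$ directly, using that $\{Z=0\}$ and $\{W=0\}$ are the same event (an entry is non-binary exactly when it carries a double edge). I would first record the two consequences of the hypotheses that drive everything. Writing $A=\sum_i r_i^{\underline{2}}$ and $B=\sum_j c_j^{\underline{2}}$, the expected number of double edges is $\lambda:=\E{W}=\frac{1}{2N^{\underline{2}}}\sum_{(i,j)\in I}r_i^{\underline{2}}c_j^{\underline{2}}=\frac{AB}{2N^{\underline{2}}}$, so condition~\ref{it:n1} is exactly the statement $\lambda=O(1)$. Since $A\ge r_1^{\underline{2}}$ and $B\ge c_1^{\underline{2}}$, the same condition forces $r_1c_1=O(N)$, whence $\E{T_{i,j}}=r_ic_j/N=O(1)$ for every entry. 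Thus no single entry has a mean that blows up, and the goal becomes the Poisson-type lower bound $\pr{W=0}=\Omega(1)$.

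The natural attempt is a Poisson approximation of $W$ by the method of factorial moments. A useful first observation is that if two double edges can be matched simultaneously and share a token, then they must in fact share a whole edge, because each token has a unique partner in the matching; hence all ``overlapping'' contributions to the factorial moments come from several edges piling up inside a single entry, i.e.\ from entries of value $\ge 3$. The token-disjoint configurations contribute $\lambda^k(1+o(1))$ to $\E{W^{\underline{k}}}$, so the whole question is whether the clustering terms, governed by sums such as $\sum_{(i,j)}r_i^{\underline{3}}c_j^{\underline{3}}/N^{\underline{3}}$, are negligible.

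Here lies the main obstacle: using $r_ic_j\le r_1c_1=O(N)$ this clustering sum is only $O(1)$, not $o(1)$, once $r_1$ is allowed to lie between $\sqrt N$ and $N$. In other words $W$ is genuinely \emph{not} asymptotically Poisson, and a naive method of moments fails. The resolution is to separate the \emph{heavy} entries, those with $\E{\binom{T_{i,j}}{2}}\ge\delta^2$ (equivalently $r_ic_j\gtrsim N$) for a small fixed $\delta$. Condition~\ref{it:n1} bounds $\sum_{(i,j)}\E{\binom{T_{i,j}}{2}}$ by $\lambda$, so there are only $O(\delta^{-2})=O(1)$ heavy entries; and because every row and column sum is $o(N)$ by the hypothesis $r_1=o(N)$, these $O(1)$ heavy entries together involve only $o(N)$ tokens. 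For the complementary \emph{light} entries the same estimate, now with the extra factor $r_ic_j<\delta N$, shows the clustering sum is $O(\delta)$, so the light double edges \emph{are} asymptotically Poisson with mean at most $\lambda$, up to an error controllable by $\delta$.

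I would then conclude by conditioning on all the heavy entries being binary and writing $\pr{Z=0}\ge\pr{\text{heavy entries binary}}\cdot\pr{\text{no light double edge}\mid\text{heavy entries binary}}$. The first factor is a product of survival probabilities of $O(1)$ entries, each with bounded mean, hence $\Omega(1)$ uniformly in $N$ (all the means being bounded by a constant coming from $r_1c_1=O(N)$). The hard and genuinely technical step is the second factor: one must show that conditioning on the configuration of the $o(N)$ heavy tokens perturbs the light double-edge statistics by only $1+o(1)$, so that the conditional law of the light double edges is still Poisson with the same mean up to $o(1)$, giving a conditional no-double-edge probability of at least $e^{-\lambda}-O(\delta)-o(1)=\Omega(1)$ for $\delta$ small. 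This benign-conditioning estimate is exactly where the hypothesis $r_1=o(N)$ is indispensable, since it guarantees that the heavy rows and columns occupy only a vanishing fraction of the $N$ tokens; combining the two $\Omega(1)$ factors yields $\pr{T\in\Omega_{\mathbf{r},\mathbf{c}}}=\Omega(1)$.
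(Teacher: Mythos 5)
Your outline follows the paper's proof quite closely: the heavy/light split is exactly the paper's decomposition into $I_\mathrm{L}=\{(i,j):(r_i-1)(c_j-1)\geq\epsilon N\}$ and $I_\mathrm{S}$, the $O(\delta^{-2})=O(1)$ bound on heavy entries is Lemma~\ref{lem:truezl}, and your factorial-moment/Poisson argument with the clustering terms controlled by the extra factor $r_ic_j<\delta N$ is the content of Lemmas~\ref{lem:truep3} and~\ref{lem:pell} (the paper runs inclusion--exclusion on $\mathcal{P}_2\cap\mathcal{P}_3^{\mathrm{c}}$, which is the same idea). Your observation that simultaneously realizable double edges sharing a token must share a whole edge is also used, implicitly, in the paper's treatment of $\mathcal{D}_\ell$.

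There are, however, two concrete problems with how you propose to execute the endgame. First, you condition on the heavy entries being \emph{binary}, whereas the paper conditions on the stronger event $W_\mathrm{L}=0$ (all heavy entries equal to zero). This is not a cosmetic difference: the entire ``benign conditioning'' step that you defer --- showing the light double-edge statistics are perturbed by only $1+o(1)$ --- is proved in the paper (Lemma~\ref{lem:truecond}) by exhibiting an explicit sequential, column-by-column sampling of the conditional law given $\{W_\mathrm{L}=0\}$, which works because that event is a pure exclusion constraint (``no token of a heavy row pairs with a token of a heavy column in a heavy entry'') and the admissible token sets are nested across columns. The event ``all heavy entries are $\leq 1$'' admits no such clean description of the conditional matching distribution, so the step you call ``hard and genuinely technical'' becomes substantially harder under your choice, and it is the whole proof. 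Nothing is lost by switching to $\{W_\mathrm{L}=0\}$, since $\{W_\mathrm{L}=0\}\subseteq\{Z_\mathrm{L}=0\}$ and one can show $\pr{W_\mathrm{L}=0}=\Omega(1)$ directly (the paper gets $\geq e^{-\gamma}-o(1)$ with $\gamma=\sum_{(i,j)\in I_\mathrm{L}}r_ic_j/N=O(1)$, using $r_1=o(N)$ exactly where you say it is needed). Second, your justification of the first factor --- ``a product of survival probabilities of $O(1)$ entries, each with bounded mean'' --- is not valid as stated, because the events $\{T_{i,j}\leq 1\}$ over distinct heavy entries are not independent under the configuration model; the product formula the paper actually uses is an exact telescoping computation of $\pr{W_\mathrm{L}=0}$ row by row, which again relies on conditioning on zeros rather than on binariness. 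With these two repairs your plan coincides with the paper's proof, but as written the central estimate is both unproved and set up in a form that resists the paper's method of proof.
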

\medskip

If $r_1$ is not $o(N)$, i.e., $\limsup_{N\to\infty} r_1/N>0$, then $(\mathbf{%
r},\mathbf{c})_N$ contains a subsequence $(\mathbf{r}^\prime(\ell'),\mathbf{c}%
^\prime(\ell'),N'(\ell'))_{\ell\geq 1}$ for which $r_1^{\prime }(\ell')=\Omega(N'(\ell'))$ as $\ell'\to\infty$. 
(Recall the definition of a subsequence of an input sequence in~\eqref{eq:subsequence}.)
The next proposition deals with
the case $r_1=\Omega(N)$ and its proof is presented in Section~\ref%
{sec:extra}.

\begin{proposition}%[Conditions for the case $r_1=\Omega(N)$]
\label{prop:extra} As $N\to\infty$, if $\sum_{(i,j)\in I}r_{i}(r_{i}-1)c_{j}(c_{j}-1)=O(N^{2})$
and $r_{1}=\Omega(N)$, then $\pr{T\in\Omega _{\mathbf{r},\mathbf{c}}}=\Omega(1)$ 
if and only if, for every subsequence $S'=({\bf r}'(\ell'),{\bf c}'(\ell'),N'(\ell'))_{\ell'\geq 1}$ of 
$S=({\bf r},{\bf c})_N$, we have
$\limsup_{\ell'\to\infty}\sum _{i=\kappa(S')}^\infty \frac{r'_i(\ell')}{N'(\ell')}>0$
or $\limsup_{\ell'\to\infty} c'_1(\ell')<\kappa(S')$,
where~$\kappa$ is defined as in~\eqref{eq:kappa}.
\end{proposition}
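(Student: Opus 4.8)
The plan is to first use the standing hypotheses to collapse the problem to a bounded number of tokens. Because $r_1=\Omega(N)$ we have $\sum_i r_i(r_i-1)\ge r_1(r_1-1)=\Omega(N^2)$, and since condition~\ref{it:n1} factors as $\big(\sum_i r_i(r_i-1)\big)\big(\sum_j c_j(c_j-1)\big)=O(N^2)$, it forces $C:=\sum_j c_j(c_j-1)=O(1)$. Hence $c_1=O(1)$, the number of \emph{multi-columns} (those with $c_j\ge 2$) is at most $C/2=O(1)$, and the number $D:=\sum_{j:\,c_j\ge2}c_j$ of type-2 tokens lying in multi-columns satisfies $D\le C=O(1)$. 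A double edge requires two type-2 tokens of a common column to be matched to a common row, so a singleton column can never create one; thus $\{Z\ge1\}$ depends only on the matching of these $D=O(1)$ \emph{dangerous} tokens, and $Z=0$ precisely when, for every multi-column $j$, its $c_j$ tokens hit distinct rows.

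I would then describe the limiting behaviour of the dangerous tokens. Passing to a subsequence, I may assume $r_i/N\to\alpha_i$ for each $i$ and each $c_j$ is eventually constant; set $\beta:=\lim\sum_{i\ge\kappa}r_i/N=1-\sum_{i<\kappa}\alpha_i$, the limiting mass on the \emph{small} rows $i\ge\kappa$. Since $D$ is bounded, matching the dangerous tokens is the same as drawing $D$ type-1 tokens without replacement from $N$, and the joint law of the rows they occupy converges to that of $D$ independent draws, each landing in big row $i<\kappa$ with probability $\alpha_i$ and in the block of small rows with probability $\beta$. Two tokens that both fall among the small rows share a small row with probability $O\big(\sum_{i\ge\kappa}(r_i/N)^2\big)=o(1)$, because $\max_{i\ge\kappa}r_i=o(N)$ gives $\sum_{i\ge\kappa}r_i^2\le(\max_{i\ge\kappa}r_i)\sum_{i\ge\kappa}r_i=o(N^2)$. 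For the ``if'' direction it then suffices, by a subsequence argument, to exhibit on every such subsequence an event $G\subseteq\{Z=0\}$ with $\liminf\pr{G}>0$, and condition~\ref{it:n2} provides exactly the room to do so. If $\beta>0$, take $G$ to be the event that all $D$ dangerous tokens land on distinct small rows, so that $\pr{G}\to\beta^{D}>0$. If instead $c_1<\kappa$, then every column has at most $\kappa-1$ tokens while there are $\kappa-1$ big rows, so I take $G$ to be the event that each multi-column $j$ sends its $c_j$ tokens to big rows $1,\dots,c_j$; then $\pr{G}\to\prod_{j:\,c_j\ge2}\prod_{t=1}^{c_j}\alpha_t>0$ since $\alpha_1,\dots,\alpha_{\kappa-1}>0$. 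In both cases $\liminf\pr{Z=0}>0$.

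For the ``only if'' direction, suppose condition~\ref{it:n2} fails. Then some subsequence has $\lim\sum_{i\ge\kappa}r_i/N=0$, i.e.\ $\beta=0$, together with $c_1\ge\kappa$ infinitely often; passing to a further subsequence I may assume $c_1\ge\kappa$ throughout. With $\beta=0$ each dangerous token lands in a big row with probability $1-o(1)$, so with probability $1-o(1)$ all $c_1$ tokens of column~$1$ land in big rows; but there are only $\kappa-1$ big rows and $c_1\ge\kappa$ such tokens, so they cannot be distinct. Consequently $\pr{Z=0}\le\pr{\text{column }1\text{ hits distinct rows}}\le\pr{\text{some token of column }1\text{ hits a small row}}=o(1)$, so $\pr{Z=0}$ is not $\Omega(1)$.

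I expect the main obstacle to be the rigorous bookkeeping surrounding $\kappa$ under passage to subsequences — it is defined through an $o(N)$ statement and a $\limsup$, and may equal $\infty$ (in which case condition~\ref{it:n2} holds automatically and the $c_1<\kappa$ branch applies with infinitely many positive $\alpha_i$) — together with the quantitative justification of the multinomial approximation for the $D$ dangerous tokens: namely, that the without-replacement sampling converges to the independent model and that the small-row self-collisions contribute only $o(1)$, with explicit error bounds. Everything downstream is a finite-dimensional computation because $D=O(1)$.
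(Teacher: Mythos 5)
Your argument is correct in substance and shares the paper's skeleton (trichotomy into $\beta>0$, $\beta=0$ with $c_1<\kappa$, and $\kappa=\infty$; pigeonhole on column~1 for necessity; the subsubsequence principle of Lemma~\ref{lem:subsubsequence} to glue the cases), but it differs in one genuinely useful way. By factoring condition~\ref{it:n1} as $\bigl(\sum_i r_i(r_i-1)\bigr)\bigl(\sum_j c_j(c_j-1)\bigr)=O(N^2)$ and using $r_1=\Omega(N)$ to get $\sum_j c_j(c_j-1)=O(1)$, you reduce the event $\{Z\geq 1\}$ to the placement of $D=O(1)$ ``dangerous'' type-2 tokens, after which every branch is a finite-dimensional limit computation. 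In particular, in the case $\limsup\sum_{i\geq\kappa}r_i/N>0$ you lower-bound $\pr{Z=0}$ directly by the probability $\to\beta^D>0$ that all $D$ tokens land in distinct small rows, whereas the paper's Lemma~\ref{lem:extrasuf} conditions on the big rows avoiding the multi-columns and then invokes the full machinery of Proposition~\ref{prop:suf} on the residual table. Your route is more self-contained for this proposition (though Proposition~\ref{prop:suf} is needed elsewhere anyway); the paper's route avoids the diagonal/compactness extraction of convergent subsequences. The remaining branches ($c_1<\kappa$ via sending column $j$'s tokens to rows $1,\dots,c_j$; $\kappa=\infty$; the pigeonhole for necessity) coincide with the paper's Lemmas~\ref{lem:extrasuf2}, \ref{lem:kappainfty} and~\ref{lem:extranec}.

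The one step you flag but do not execute --- the bookkeeping of $\kappa$ under passage to subsequences in the necessity direction --- is a real (if routine) gap: after extracting a further subsequence $S''$ of the failing subsequence $S'$, both hypotheses must be re-verified with respect to $\kappa(S'')$, not $\kappa(S')$. The paper closes this in Lemma~\ref{lem:extranec} exactly as you would hope: monotonicity $\kappa(S'')\leq\kappa(S')$ from~\eqref{eq:kappamonotone} preserves $c_1''\geq\kappa(S'')$, while $\sum_{i\geq\kappa(S'')}r_i''$ splits into the tail $\sum_{i\geq\kappa(S')}r_i''=o(N'')$ plus the finitely many terms $\kappa(S'')\leq i<\kappa(S')$, each of which is $o(N'')$ by definition of $\kappa(S'')$ (finiteness of $\kappa(S')$ follows from $c_1=O(1)$ and $\limsup c_1'\geq\kappa(S')$). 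A symmetric check is needed in the sufficiency direction when you pass to the subsequence on which all $r_i/N$ converge: condition~\ref{it:n2} must be applied to that subsequence with its own $\kappa$, which is legitimate precisely because the condition is quantified over all subsequences of $S$. With these verifications spelled out, your proof is complete.
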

\medskip

It is clear that Propositions~\ref{prop:nec},~\ref{prop:suf}, and~\ref{prop:extra} establish 
that $\pr{T\in\Omega_{\mathbf{r},\mathbf{c}}}=\Omega(1)$ if and only if both condition~\ref{it:n1} 
and~\ref{it:n2} in
Theorem~\ref{thm:main} are satisfied and $r_1$ is either $o(N)$ or $\Omega(N)$. 
We now explain the case when $r_1$ is neither $o(N)$ nor $\Omega(N)$, i.e., 
$\limsup_{N\to\infty}r_1/N>\liminf_{N\to\infty}r_1/N=0$. 
For this, we will make use of the following technical lemma, 
which is also used in~\cite[chapter 9]{janson2000} and~\cite{janson2009}.

\begin{lemma}[Subsubsequence principle]
\label{lem:subsubsequence}
   If every subsequence $(\mathbf{r}'(\ell'),\mathbf{c}'(\ell'),N'(\ell'))_{\ell'}$
   of $(\mathbf{r},\mathbf{c})_N$
   contains a further subsequence $(\mathbf{r}''(\ell''),\mathbf{c}''(\ell''),N''(\ell''))_{\ell''}$ for which
   $\pr{T\in\Omega _{\mathbf{r}'',\mathbf{c}''}}=\Omega(1)$ as $\ell''\to\infty$, then 
   $\pr{T\in\Omega _{\mathbf{r},\mathbf{c}}}=\Omega(1)$ as $N\to\infty$.
\end{lemma}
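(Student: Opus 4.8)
The plan is to argue by contradiction, exactly as in the classical subsubsequence principle for convergent sequences, but adapted to the one-sided property ``bounded away from zero.'' Write $p(\ell)=\pr{T\in\Omega_{\mathbf{r}(\ell),\mathbf{c}(\ell)}}$, so that $p(\ell)\in[0,1]$ for every $\ell$. Recall that the conclusion $\pr{T\in\Omega_{\mathbf{r},\mathbf{c}}}=\Omega(1)$ means precisely $\liminf_{\ell\to\infty}p(\ell)>0$. I would suppose, for contradiction, that this fails; since $p(\ell)\ge 0$, the failure forces $\liminf_{\ell\to\infty}p(\ell)=0$.

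First I would use the definition of $\liminf$ to extract a subsequence witnessing the value $0$: there exist indices $k_1<k_2<\cdots$ with $p(k_{\ell'})\to 0$ as $\ell'\to\infty$. In the notation of~\eqref{eq:subsequence}, these indices determine a subsequence $S'=(\mathbf{r}'(\ell'),\mathbf{c}'(\ell'),N'(\ell'))_{\ell'}$ of $S$ with $\mathbf{r}'(\ell')\equiv\mathbf{r}(k_{\ell'})$ and $\mathbf{c}'(\ell')\equiv\mathbf{c}(k_{\ell'})$, and therefore with $\pr{T\in\Omega_{\mathbf{r}'(\ell'),\mathbf{c}'(\ell')}}=p(k_{\ell'})\to 0$.

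Next I would invoke the hypothesis of the lemma applied to this particular subsequence $S'$: it guarantees a further subsequence $S''=(\mathbf{r}''(\ell''),\mathbf{c}''(\ell''),N''(\ell''))_{\ell''}$ of $S'$ along which $\pr{T\in\Omega_{\mathbf{r}''(\ell''),\mathbf{c}''(\ell'')}}=\Omega(1)$, i.e., there is a constant $c>0$ (possibly depending on $S'$) with $\pr{T\in\Omega_{\mathbf{r}''(\ell''),\mathbf{c}''(\ell'')}}\ge c$ for all large $\ell''$. The key observation is that a subsequence of a subsequence is again a subsequence in the sense of~\eqref{eq:subsequence}, so the index set underlying $S''$ is a sub-collection of $\{k_{\ell'}\}_{\ell'}$. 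Consequently the sequence $(\pr{T\in\Omega_{\mathbf{r}''(\ell''),\mathbf{c}''(\ell'')}})_{\ell''}$ is a subsequence of the null sequence $(p(k_{\ell'}))_{\ell'}$ and hence also tends to $0$. This contradicts the uniform lower bound $c>0$, which completes the argument and yields $\liminf_{\ell}p(\ell)>0$, as desired.

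I do not expect any genuine obstacle here: the entire content is the standard ``every subsequence admits a sub-subsequence converging to the target implies convergence'' device, specialized to the statement $\liminf>0$. The only points requiring mild care are (i) unwinding the definition of $\Omega(1)$ so that its failure is genuinely $\liminf_\ell p(\ell)=0$ (which uses $p(\ell)\in[0,1]$), and (ii) checking that subsequences compose, so that $S''$ really is a subsequence of the original input sequence $S$ and its associated probabilities form a subsequence of $(p(k_{\ell'}))_{\ell'}$; both are immediate from~\eqref{eq:subsequence}. Note in particular that the constant $c$ furnished by the hypothesis is allowed to depend on the chosen subsequence, but this causes no difficulty, since any single positive lower bound already contradicts convergence to $0$.
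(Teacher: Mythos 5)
Your proof is correct and follows essentially the same route as the paper: argue by contradiction, extract a subsequence along which the probability tends to $0$, and note that the hypothesis then forces a further subsequence bounded away from $0$, which is impossible since every subsequence of a null sequence is null. You merely spell out the final contradiction a bit more explicitly than the paper does.
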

\begin{proof}
   We will prove this lemma by contradiction. Assume that 
   every subsequence $(\mathbf{r}'(\ell'),\mathbf{c}'(\ell'),N'(\ell'))_{\ell'}$
   of $(\mathbf{r},\mathbf{c})_N$
   contains a further subsequence $(\mathbf{r}''(\ell''),\mathbf{c}''(\ell''),N''(\ell''))_{\ell''}$ for which
   $\pr{T\in\Omega _{\mathbf{r}'',\mathbf{c}''}}=\Omega(1)$ as $\ell''\to\infty$, but 
   $\pr{T\in\Omega _{\mathbf{r},\mathbf{c}}}$ is not $\Omega(1)$ as $N\to\infty$. This means that 
   $\liminf_{N\to\infty}\pr{T\in\Omega _{\mathbf{r},\mathbf{c}}}=0$ and, consequently,
   there exists a subsequence $(\hat{\mathbf{r}}(\hat \ell),\hat{\mathbf{c}}(\hat \ell),\hat N(\hat \ell))_{\hat \ell}$ of $(\mathbf{r},\mathbf{c})_N$
   such that $\lim_{\hat \ell\to\infty}\pr{T\in\Omega _{\hat{ \mathbf{r}},\hat {\mathbf{c}}}}=0$, which contradicts the assumption.   
\end{proof}

Finally, we will be able to conclude the proof of Theorem~\ref{thm:main} with Lemma~\ref{lem:applyssseqp} below, which uses 
the subsubsequence principle. 
Since we will also apply this lemma later on, 
we give it in more generality than needed here. To avoid ambiguity,
we use the full notation discussed in~\eqref{eq:subsequence} and in the paragraph immediately preceding it.
\begin{lemma}
\label{lem:applyssseqp}
   Let $\mathcal{Z}$ be some space of sequences $(\mathbf{r}(\ell),\mathbf{c}(\ell),N(\ell))_\ell$ indexed by $\ell\geq 1$ such that if 
   $(\mathbf{r}(\ell),\mathbf{c}(\ell),N(\ell))_\ell\in\mathcal{Z}$ then any subsequence of $(\mathbf{r}(\ell),\mathbf{c}(\ell),N(\ell))_\ell$ is also in $\mathcal{Z}$. 
   Given a sequence $S=(\mathbf{r}(\ell),\mathbf{c}(\ell),N(\ell))_\ell\in\mathcal{Z}$, 
   let $f_\ell(S)$ be a sequence of nonnegative real numbers indexed by 
   $\ell\geq 1$.
   Assume that for all $S=(\mathbf{r}(\ell),\mathbf{c}(\ell),N(\ell))_\ell\in\mathcal{Z}$ such that $f_\ell(S)=\Omega(1)$ or
   $f_\ell(S)=o(1)$ as $\ell\to\infty$, we have $\pr{T \in \Omega_{\mathbf{r},\mathbf{c}}}=\Omega(1)$ as $N\to\infty$.
   Then, $\pr{T \in \Omega_{\mathbf{r},\mathbf{c}}}=\Omega(1)$ also holds for each $S=(\mathbf{r}(\ell),\mathbf{c}(\ell),N(\ell))_\ell\in\mathcal{Z}$ for 
   which
   $\limsup_{\ell\to\infty} f_\ell(S) > \liminf_{\ell\to\infty}f_\ell(S)=0$.
\end{lemma}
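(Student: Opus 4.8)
The plan is to invoke the subsubsequence principle (Lemma~\ref{lem:subsubsequence}). Fix $S=(\mathbf{r}(\ell),\mathbf{c}(\ell),N(\ell))_\ell\in\mathcal{Z}$ with $\limsup_{\ell}f_\ell(S)>\liminf_{\ell}f_\ell(S)=0$. By Lemma~\ref{lem:subsubsequence}, to conclude $\pr{T\in\Omega_{\mathbf{r},\mathbf{c}}}=\Omega(1)$ it suffices to show that every subsequence $S'$ of $S$ admits a further subsequence $S''$ for which $\pr{T\in\Omega_{\mathbf{r}'',\mathbf{c}''}}=\Omega(1)$. The guiding idea is that the hypothesis of the lemma already disposes of the two ``clean'' regimes $f_\ell=\Omega(1)$ and $f_\ell=o(1)$, so the only work is to show that the oscillating regime $\liminf f=0<\limsup f$ can always be reduced, along a suitable sub-subsequence, to one of those two regimes.

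Concretely, I would fix an arbitrary subsequence $S'$ of $S$. By the assumed closure of $\mathcal{Z}$ under taking subsequences, $S'\in\mathcal{Z}$, so $f$ is defined along $S'$ and $(f_{\ell'}(S'))_{\ell'}$ is a sequence of nonnegative reals. I then split into two cases according to the value of $\liminf_{\ell'}f_{\ell'}(S')$. If $\liminf_{\ell'}f_{\ell'}(S')>0$, then $f_{\ell'}(S')=\Omega(1)$ as $\ell'\to\infty$, and I simply take $S''=S'$; applying the hypothesis of the lemma to $S''\in\mathcal{Z}$ yields $\pr{T\in\Omega_{\mathbf{r}'',\mathbf{c}''}}=\Omega(1)$. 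If instead $\liminf_{\ell'}f_{\ell'}(S')=0$, I select indices along which $f_{\ell'}(S')\to 0$ to obtain a further subsequence $S''$ of $S'$ with $f_{\ell''}(S'')=o(1)$; once again $S''\in\mathcal{Z}$ by closure, and the hypothesis gives $\pr{T\in\Omega_{\mathbf{r}'',\mathbf{c}''}}=\Omega(1)$. In either case the required sub-subsequence exists, so Lemma~\ref{lem:subsubsequence} delivers the conclusion.

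The one point deserving care---and the only place where anything beyond bookkeeping occurs---is the compatibility of the functional $f$ with the passage to subsequences: in the second case I must know that extracting the sub-subsequence of $S'$ on which the values $f_{\ell'}(S')$ tend to $0$ actually produces a tuple sequence $S''$ whose own functional values $f_{\ell''}(S'')$ coincide with, and hence inherit the $o(1)$ behavior of, the selected values. This is precisely the reindexing consistency of $f$ under the subsequence operation of~\eqref{eq:subsequence}, and it is what transports the elementary dichotomy for nonnegative real sequences (either bounded away from $0$, or possessing a subsequence tending to $0$) back into a statement about the generated tables. Granting this compatibility, no estimates on $T$ are needed at all: the proof is a pure combination of the subsubsequence principle, the closure of $\mathcal{Z}$, and that real-sequence dichotomy applied to $(f_{\ell'}(S'))_{\ell'}$.
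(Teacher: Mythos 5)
Your proof is correct and follows essentially the same route as the paper: an appeal to the subsubsequence principle (Lemma~\ref{lem:subsubsequence}) combined with a two-case dichotomy on the behavior of $f$ along an arbitrary subsequence $S'$, using the closure of $\mathcal{Z}$ and the reindexing consistency of $f$ (which the paper's own argument also implicitly assumes and which holds for the pointwise-defined $f$ used in the applications). The only cosmetic difference is that you split on $\liminf_{\ell'}f_{\ell'}(S')>0$ versus $=0$ (extracting a further subsequence in the $o(1)$ case), whereas the paper splits on $\limsup_{\ell'}f_{\ell'}(S')=0$ versus $>0$ (extracting in the $\Omega(1)$ case); the two dichotomies are mirror images and equally valid.
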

\begin{proof}
   We use the subsubsequence principle (Lemma~\ref{lem:subsubsequence}). 
   If $\limsup_{\ell\to\infty} f_\ell(S) > \liminf_{\ell\to\infty}f_\ell(S)=0$, then 
   for every subsequence $S'=(\mathbf{r}'(\ell'),\mathbf{c}'(\ell'),N'(\ell'))_{\ell'}$ of $S$ it is the case that 
   $S'\in\mathcal{Z}$ (by the property of $\mathcal{Z}$) and either 
   \begin{equation}
      \limsup_{\ell'\to\infty} f_{\ell'}(S')=0
      \label{eq:subspart1}
   \end{equation}
   or 
   \begin{equation}
      \text{there is a subsequence $S''=(\mathbf{r}''(\ell''),\mathbf{c}''(\ell''),N''(\ell''))_{\ell''}\in\mathcal{Z}$ of $S'$ for which
      $\liminf_{\ell''\to\infty}f_{\ell''}(S'')>0$.}
      \label{eq:subspart2}
   \end{equation}
   In the case of~\eqref{eq:subspart1}, since $f_{\ell'}(S')=o(1)$ as $\ell'\to\infty$, 
   we know that $\pr{T \in \Omega_{\mathbf{r'},\mathbf{c'}}}=\Omega(1)$. In the case of~\eqref{eq:subspart2},  
   since $f_N(S'')=\Omega(1)$, we have $\pr{T \in \Omega_{\mathbf{r''},\mathbf{c''}}}=\Omega(1)$ as $\ell''\to\infty$.
   Therefore, using the subsubsequence principle we obtain 
   $\pr{T \in \Omega_{\mathbf{r},\mathbf{c}}}=\Omega(1)$ as $N\to\infty$.
\end{proof}

We set $\mathcal{Z}$ as the space 
of sequences satisfying conditions~\ref{it:n1} and~\ref{it:n2} from Theorem~\ref{thm:main}. It is easy to check that 
this space satisfies the condition in Lemma~\ref{lem:applyssseqp}. Then, for $S=(\mathbf{r}(\ell),\mathbf{c}(\ell),N(\ell))_\ell\in\mathcal{Z}$ we set 
$f_\ell(S)=\tfrac{r_1(\ell)}{N(\ell)}$, and Lemma~\ref{lem:applyssseqp} gives that, as $N\to\infty$,
$\pr{T\in\Omega_{\mathbf{r},\mathbf{c}}}=\Omega(1)$ also for the 
case when $r_1$ is neither $\Omega(N)$ nor $o(N)$ but conditions~\ref{it:n1} and~\ref{it:n2} hold. 
This completes the proof of Theorem~\ref{thm:main}.

%===============================================================================
%===============================================================================
%===============================================================================
\section{Proof of Proposition~\protect\ref{prop:nec}}
\label{sec:nec}

We prove Proposition~\ref{prop:nec} using the second-moment method.
We define the function 
\begin{equation}
\mu(N)=\sum_{(i,j)\in I}\frac{r_{i}^{\underline{2}}c_{j}^{\underline{2}}}{%
2N^{\underline{2}}},  \label{eq:cond2}
\end{equation}
which satisfies $\limsup_{N\to\infty} \mu(N)=\infty$ by the assumptions of 
Proposition~\ref{prop:nec}, and show that under this condition
$\liminf_{N\to\infty}\pr{T\in \Omega_{\mathbf{r},\mathbf{c}}}=0$ (i.e., 
$\pr{T\in \Omega_{\mathbf{r},\mathbf{c}}}$ is not $\Omega(1)$). 
Let $F$ be the random
variable counting the number of double edges that are matched by the
configuration model, that is, $F=\sum_{B\in {\mathcal{B}}_{2}}{\mathbf{1}}%
\left( {\mathcal{M}}(B)\right) $, where ${\mathbf{1}}\left( \cdot \right) $
is the indicator function. Note that 
\begin{equation}
{\mathbf{E}}F=\sum_{(i,j)\in I}\sum_{B\in {\mathcal{B}}_{2}(i,j)}\frac{1}{N^{%
\underline{2}}}=\sum_{(i,j)\in I}\frac{r_{i}^{\underline{2}}c_{j}^{%
\underline{2}}}{2!N^{\underline{2}}}=\mu (N).  \label{eq:exp}
\end{equation}%
Our strategy is to use Chebyshev's inequality to obtain an upper
bound for $\pr{T\in \Omega _{\mathbf{r},\mathbf{c}}}$ via 
\begin{equation}
\pr{T\in \Omega _{\mathbf{r},\mathbf{c}}}=\pr{F\leq 0}=\pr{%
{\mathbf{E}}F-F\geq {\mathbf{E}}F}\leq \frac{{\mathbf{Var}}(F)}{{\mathbf{E}}%
^{2}F}.  \label{eq:cheb}
\end{equation}

We now derive an upper bound for ${\mathbf{Var}}(F)$. Note that 
\begin{equation*}
{\mathbf{E}}F^{2}=\sum_{(i,j)\in I}\sum_{B\in {\mathcal{B}}%
_{2}(i,j)}\sum_{(i^{\prime },j^{\prime })\in I}\sum_{B^{\prime }\in {%
\mathcal{B}}_{2}(i^{\prime },j^{\prime })}\pr{{\mathcal{M}}%
(B)\cap {\mathcal{M}}(B^{\prime })} ,
\end{equation*}%
from which we can write 
\begin{eqnarray}
{\mathbf{Var}}(F) &=&\sum_{(i,j)\in I}\sum_{B\in {\mathcal{B}}_{2}(i,j)}%
\pr{{\mathcal{M}}(B)}\sum_{(i^{\prime },j^{\prime })\in
I}\sum_{B^{\prime }\in {\mathcal{B}}_{2}(i^{\prime },j^{\prime })}\left( 
\pr{{\mathcal{M}}(B^{\prime })\mid {\mathcal{M}}(B)}-\pr{{%
\mathcal{M}}(B^{\prime })}\right)  \label{eq:varexp}\\
&\leq &\sum_{(i,j)\in I}\frac{r_{i}^{\underline{2}}c_{j}^{\underline{2}}}{%
2N^{\underline{2}}}\left( 1+\varphi _{1}(i,j)+\varphi _{2}(i,j)+\varphi
_{3}(i,j)+\varphi _{4}(i,j)\right) ,\nonumber
\end{eqnarray}%
where the terms $1$ and $\varphi _{1}(i,j)$ to $\varphi _{4}(i,j)$ are
explained next. First of all, when $(i,j)=(i^{\prime },j^{\prime })$ and $%
B=B^{\prime }$ we have that $\left( \pr{{\mathcal{M}}(B^{\prime })\mid 
{\mathcal{M}}(B)}-\pr{{\mathcal{M}}(B^{\prime })}\right) \leq 1$. Now,
the term $\varphi _{1}(i,j)$ corresponds to the cases where $B$ and $%
B^{\prime }$ are double edges for the same entry $(i,j)$ and also have one
edge in common (i.e., $B\cup B^{\prime }$ is a set of three edges). In such
cases, to compute $\varphi _{1}(i,j)$ we shall use $\pr{{\mathcal{M}}%
(B^{\prime })\mid {\mathcal{M}}(B)}-\pr{{\mathcal{M}}(B^{\prime
})}\leq \pr{{\mathcal{M}}(B^{\prime })\mid {\mathcal{M}}(B)}$ and
simply estimate $\pr{{\mathcal{M}}(B^{\prime })\mid {\mathcal{M}}(B)}$%
. The term $\varphi _{2}(i,j)$ corresponds to the terms where $B$ and $%
B^{\prime }$ are double edges for the same entry $(i,j)$ but have no edge in
common (i.e., $B\cup B^{\prime }$ is a set of four edges). Before proceeding
to describe $\varphi _{3}(i,j)$ and $\varphi _{4}(i,j)$, let us explain how
to compute $\varphi _{1}(i,j)$ and $\varphi _{2}(i,j)$, which we express as%
\begin{equation}
\varphi _{1}(i,j)=\frac{2(r_{i}-2)(c_{j}-2)}{N-2},  \label{eq:phi1}
\end{equation}%
and 
\begin{equation}
\varphi _{2}(i,j)=\frac{(r_{i}-2)^{\underline{2}}(c_{j}-2)^{\underline{2}}}{%
2(N-2)^{\underline{2}}}-\frac{r_{i}^{\underline{2}}c_{j}^{\underline{2}}}{%
2N^{\underline{2}}} \leq 0.  \label{eq:phi2}
\end{equation}%
Given that a double edge $\{e_{1},e_{2}\}$ is chosen from the entry $(i,j)$, 
$\varphi _{1}(i,j)$ is the probability that another edge $e_{3}$ from $(i,j)$
is chosen, which is given by $\frac{(r_{i}-2)(c_{j}-2)}{N-2}$. The
additional factor $2$ comes from the fact that, once we fix $e_{3}$, there
are $2$ possible choices of double edges for $B^{\prime }$, namely $%
B^{\prime }=\{e_{1},e_{3}\}$ and $B^{\prime }=\{e_{2},e_{3}\}$. The equation
for $\varphi _{2}(i,j)$ is obtained in a similar way, but we need to compute
the probability that we choose a double edge $\{e_{3},e_{4}\}$ from $(i,j)$
such that $\{e_{1},e_{2}\}\cap \{e_{3},e_{4}\}=\emptyset $, which gives the
term $\frac{(r_{i}-2)^{\underline{2}}(c_{j}-2)^{\underline{2}}}{2(N-2)^{%
\underline{2}}}$. The last term in (\ref{eq:phi2}) comes from the term $\pr{\mathcal{M}(B')}$ from~\eqref{eq:varexp},
which is the probability that a double edge $%
\{e_{3},e_{4}\}$ is chosen independently of $\{e_{1},e_{2}\}$.

The term $\varphi _3(i,j)$ corresponds to the terms where $B$ and $B^{\prime
}$ are double edges for the same row but different columns or for the same
column but different rows. Using similar reasoning we obtain 
\begin{equation}
\varphi _3(i,j)=\sum _{i^{\prime }\neq i}\left (\frac{r_{i^{\prime
}}^{\underline 2}(c_j-2)^{\underline 2}}{2(N-2)^{\underline 2}}-\frac{%
r_{i^{\prime }}^{\underline 2}c_j^{\underline 2}}{2N^{\underline 2}}\right
)+\sum _{j^{\prime }\neq j}\left (\frac{(r_{i}-2)^{\underline 2}c_{j^{\prime
}}^{\underline 2}}{2(N-2)^{\underline 2}}-\frac{r_{i}^{\underline
2}c_{j^{\prime }}^{\underline 2}}{2N^{\underline 2}}\right )\leq0,
\label{eq:phi3}
\end{equation}
Ultimately, $\varphi _4(i,j)$ corresponds to terms where $B$
and $B^{\prime }$ represent disjoint rows and columns and is given by 
\begin{equation}
\varphi _4(i,j)=\sum _{i^{\prime }\neq i}\sum _{j^{\prime }\neq j}\left (%
\frac{r_{i^{\prime }}^{\underline 2}c_{j^{\prime }}^{\underline 2}}{%
2(N-2)^{\underline 2}}-\frac{r_{i^{\prime }}^{\underline 2}c_{j^{\prime
}}^{\underline 2}}{2N^{\underline 2}}\right ).
\end{equation}

Now we simplify the equations above. For $\varphi_2(i,j)$ and $\varphi_3(i,j)$ we simply
use the fact that they are at most $0$. For $\varphi_1(i,j)$, we have that 
(\ref{eq:cond2})
implies 
\begin{equation}
\varphi _{1}(i,j)\leq 2\frac{\sqrt{r_{i}(r_{i}-1)c_{j}(c_{j}-1)}}{N-2}\leq 2%
\sqrt{2\mu (N)}(1+O(1/N)),  \label{eq:newphi1}
\end{equation}%
for $r_{i},c_{j}\geq 2$. Clearly, the entries $(i,j)$ with $r_{i}\leq 1$ or $%
c_{j}\leq 1$ do not contribute to $F$. 
% We can write $\varphi _{2}(i,j)$ as 
% \begin{equation*}
% \varphi _{2}(i,j)\leq \frac{r_{i}^{\underline{2}}c_{j}^{\underline{2}}}{2}%
% \left( \frac{1}{(N-2)^{\underline{2}}}-\frac{1}{N^{\underline{2}}}\right)
% =O(\mu (N)/N),
% \end{equation*}%
Since $1/(N-2)^{\underline{2}}-1/N^{\underline{2}}=O(1/N^{3})$ we can write 
\begin{equation*}
\varphi _{4}(i,j)\leq \sum_{i^{\prime }\neq i}\sum_{j^{\prime }\neq j}\frac{%
r_{i^{\prime }}^{\underline{2}}c_{j^{\prime }}^{\underline{2}}}{2}%
O(1/N^{3})=O(\mu (N)/N).
\end{equation*}

Therefore, the variance of $F$ translates to 
\begin{eqnarray}
{\mathbf{Var}}(F) &\leq &\sum_{(i,j)\in I}\frac{r_{i}^{\underline{2}}c_{j}^{%
\underline{2}}}{2N^{\underline{2}}}\left( 1+2\sqrt{2\mu (N)}+O(\mu
(N)/N)\right)  \notag \\
&=&\mu (N)O(\sqrt{\mu (N)}+\mu (N)/N).  \label{eq:var}
\end{eqnarray}%
Since $r_{i},c_{j}\leq N$ for all $(i,j)\in I$, we have $\mu (N)=O(N^{2})$,
and consequently, $\sqrt{\mu (N)}=\Omega (\mu (N)/N)$. Now, plugging (\ref%
{eq:exp}) and (\ref{eq:var}) into (\ref{eq:cheb}), we obtain that there are
constants $C$ and $N_0$ such that for all $N\geq N_0$ we obtain 
\begin{equation*}
\pr{T\in \Omega _{\mathbf{r},\mathbf{c}}}=\pr{F\leq 0} \leq 
\frac{C}{\sqrt{\mu (N)}}+\frac{C}{N}.
\end{equation*}
Taking the $\liminf$ as $N\to\infty$ concludes the proof of Proposition~\ref%
{prop:nec} since $\limsup_{N\to\infty}\mu(N)=\infty$.

%===============================================================================
%===============================================================================
%===============================================================================
\section{Proof of Proposition~\protect\ref{prop:suf}}
\label{sec:suf}

In the proof of Proposition~\ref{prop:suf}
we assume that $2\leq c_{1}\leq r_{1}=o(N)$ and that there exists a constant $C>0$ such that 
for all sufficiently large $N$ it holds that
\begin{equation}
\sum_{(i,j)\in I}\frac{r_{i}^{\underline{2}}c_{j}^{\underline{2}}}{2N^{\underline{2}}} \leq C.
\label{eq:truesuf}
\end{equation}

We split the table $T$ into two regions $I_\mathrm{L}$ and $I_\mathrm{S}$.
Let $\epsilon>0$ be a small number that we will set later, and 
define the index sets $I_{\mathrm{L}}=\{(i,j)\in
I\colon (r_{i}-1)(c_{j}-1)\geq \epsilon N\}$ and $I_\mathrm{S}=I\setminus I_%
\mathrm{L}$. 
We remark that the set $I_{\mathrm{L}}$ may be empty.
Intuitively, $I_{\mathrm{L}}$ represents the entries of $T$
with large row and column sums. Since the $r_{i}$'s and the $c_{j}$'s are
assumed to be non-increasing, a useful conceptual diagram for the definition
of $I_{\mathrm{L}}$ is given by the shaded area in Figure~\ref{fig:diag}(a).

\begin{figure}[tbp]
   \begin{center}
      %\scalebox{.8}{\input{diag.pdftex_t}}
      \includegraphics[scale=.9]{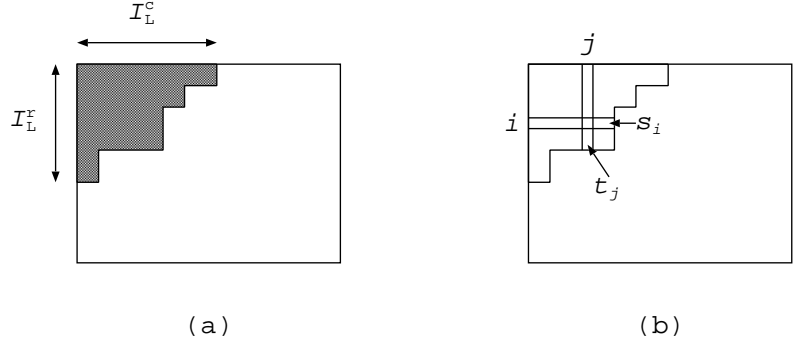}
   \end{center}
   \caption{Diagram illustrating the entries in $I_{\mathrm{L}}$ (shaded area in part~(a)) and the
   definition of $I_\mathrm{L}^\mathrm{r}$, $I_\mathrm{L}^\mathrm{c}$, $s_i$,
   and $t_j$.}
   \label{fig:diag}
\end{figure}

Let $Z_{\mathrm{L}}$ be the number of non-binary entries in $I_{\mathrm{L}}$%
, and $Z_{\mathrm{S}}$ be the number of non-binary entries in $I_{\mathrm{S}%
} $. Clearly, $Z=Z_{\mathrm{L}}+Z_{\mathrm{S}}$. Let $W_\mathrm{L}$ be the sum of the entries in 
$I_\mathrm{L}$ (i.e., $W_\mathrm{L}=\sum_{(i,j)\in I_\mathrm{L}}T_{i,j}$). 
Note that $\{W_\textrm{L}=0 \} \subseteq \{Z_\textrm{L}=0\}$, which gives
$\pr{T\in \Omega _{%
\mathbf{r},\mathbf{c}}} \geq \pr{W_{\mathrm{L}}=0}\pr{Z_{\mathrm{S}%
}=0\mid W_{\mathrm{L}}=0}$. We will deal with the terms 
$\pr{W_{\mathrm{L}}=0}$ and $\pr{Z_{\mathrm{S}%
}=0\mid W_{\mathrm{L}}=0}$ separately.
We will need the following definitions.
Let $I_{\mathrm{L}}^{\mathrm{r}}$ be the set of rows with at least one entry
in $I_{\mathrm{L}}$ (i.e., $I_{\mathrm{L}}^{\mathrm{r}}=\{i\colon (i,j)\in
I_{\mathrm{L}}$ for some $j\})$, and $I_{\mathrm{L}}^{\mathrm{c}}$
be the set of columns with at least one entry in $I_{\mathrm{L}}$ (see Figure~\ref{fig:diag}(a)).
Lemma~\ref{lem:truezl} below deals with the term 
$\pr{W_{\mathrm{L}}=0}$. 

\begin{lemma}
\label{lem:truezl} 
%Let $\gamma =\sum _{(i,j)\in I_\mathrm{L}}r_ic_j/N$ and $%
%r_1=o(N)$. 
If (\ref{eq:truesuf}) is satisfied and $r_1=o(N)$, then we
obtain $|I_\mathrm{L}|=O(1)$ and $\pr{W_\mathrm{L}=0}=\Omega(1)$.
\end{lemma}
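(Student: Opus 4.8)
The plan is to prove the two assertions in turn, drawing only on the per-entry and aggregate consequences of~\eqref{eq:truesuf}, and then to use the first conclusion inside the second.

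\textbf{Step 1: $|I_\mathrm{L}|=O(1)$.} For any $(i,j)\in I_\mathrm{L}$ the defining inequality $(r_i-1)(c_j-1)\ge\epsilon N$ forces $r_i,c_j\ge2$ (for large $N$), so using $r_i^{\underline 2}=r_i(r_i-1)\ge(r_i-1)^2$, the analogous bound for $c_j$, and $N^{\underline 2}\le N^2$, I would bound each summand of~\eqref{eq:truesuf} below by a positive constant:
\[
\frac{r_i^{\underline 2}c_j^{\underline 2}}{2N^{\underline 2}}\ \ge\ \frac{[(r_i-1)(c_j-1)]^2}{2N^2}\ \ge\ \frac{\epsilon^2}{2}.
\]
Summing this over $(i,j)\in I_\mathrm{L}$ and comparing with~\eqref{eq:truesuf} gives $|I_\mathrm{L}|\,\epsilon^2/2\le C$, i.e.\ $|I_\mathrm{L}|\le 2C/\epsilon^2=O(1)$. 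In particular $|I_\mathrm{L}^\mathrm{r}|,|I_\mathrm{L}^\mathrm{c}|=O(1)$.

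\textbf{Step 2: setting up $\pr{W_\mathrm{L}=0}=\Omega(1)$.} The event $\{W_\mathrm{L}=0\}$ says precisely that no type-1 token of a row $i\in I_\mathrm{L}^\mathrm{r}$ is matched to a type-2 token of a column $j$ with $(i,j)\in I_\mathrm{L}$. For each $i\in I_\mathrm{L}^\mathrm{r}$ I write $C_i=\{j\colon(i,j)\in I_\mathrm{L}\}$ for its forbidden columns and $s_i=\sum_{j\in C_i}c_j$ for the number of forbidden type-2 tokens, and set $\sigma=\sum_{i\in I_\mathrm{L}^\mathrm{r}}r_i$. Since $c_j\le c_1\le r_1=o(N)$ and $|C_i|,|I_\mathrm{L}^\mathrm{r}|=O(1)$, both $s_i=o(N)$ and $\sigma=o(N)$. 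I would then generate the matching by revealing the partners of the $\sigma$ type-1 tokens in rows of $I_\mathrm{L}^\mathrm{r}$ one at a time: when the $k$-th such token (in a row $i$) is exposed, at most $k-1<\sigma$ type-2 tokens are already used, so among the at least $N-\sigma$ remaining partners at most $s_i$ are forbidden for it; hence, conditionally on all earlier tokens avoiding their forbidden sets, it avoids its own with probability at least $1-s_i/(N-\sigma)$. Multiplying these conditional probabilities and grouping by row yields
\[
\pr{W_\mathrm{L}=0}\ \ge\ \prod_{i\in I_\mathrm{L}^\mathrm{r}}\Bigl(1-\frac{s_i}{N-\sigma}\Bigr)^{r_i}.
\]

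\textbf{Step 3: finishing.} Because $s_i/(N-\sigma)=o(1)$, for large $N$ each factor obeys $1-s_i/(N-\sigma)\ge e^{-2s_i/(N-\sigma)}$, so the product is at least $\exp\bigl(-2E_\mathrm{L}/(N-\sigma)\bigr)$, where $E_\mathrm{L}:=\sum_{i}r_is_i=\sum_{(i,j)\in I_\mathrm{L}}r_ic_j$. The single-entry case of~\eqref{eq:truesuf} gives $r_i(r_i-1)c_j(c_j-1)\le 2CN^2$, hence $(r_i-1)(c_j-1)\le\sqrt{2C}\,N$ and $r_ic_j=O(N)$ for each $(i,j)\in I_\mathrm{L}$; as $|I_\mathrm{L}|=O(1)$ this gives $E_\mathrm{L}=O(N)$. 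Combined with $N-\sigma=N(1-o(1))$, the exponent is $-O(1)$, so $\pr{W_\mathrm{L}=0}\ge e^{-O(1)}=\Omega(1)$.

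I expect Step 3 to be the crux. The entries of $I_\mathrm{L}$ have $r_ic_j=\Theta(N)$, so $\E{W_\mathrm{L}}=\Theta(1)$ rather than $o(1)$, and a first-moment/Markov bound cannot yield $\pr{W_\mathrm{L}=0}=\Omega(1)$; the real content is the Poisson-type lower bound $\pr{W_\mathrm{L}=0}\ge e^{-\Theta(1)}$ coming from the product estimate. The two quantitative inputs that drive it are $E_\mathrm{L}=O(N)$ (controlling the effective mean of forbidden matches) and $\sigma=o(N)$ (keeping every denominator $N-(k-1)$ within $(1-o(1))N$), both consequences of $r_1=o(N)$, \eqref{eq:truesuf}, and $|I_\mathrm{L}|=O(1)$.
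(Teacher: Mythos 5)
Your proof is correct and follows essentially the same route as the paper: both arguments reduce $\pr{W_\mathrm{L}=0}$ to the product bound $\prod_{i\in I_\mathrm{L}^\mathrm{r}}\bigl(1-s_i/(N-o(N))\bigr)^{r_i}$ and then exponentiate, using $\sum_i r_i s_i=\sum_{(i,j)\in I_\mathrm{L}}r_ic_j=O(N)$ together with $|I_\mathrm{L}|=O(1)$ and $r_1=o(N)$. The only cosmetic differences are that the paper derives $|I_\mathrm{L}|=O(1)$ and $\sum_{(i,j)\in I_\mathrm{L}}r_ic_j=O(N)$ jointly from the aggregate sum (via the quantity $\gamma$) rather than from per-entry bounds, and writes the exact falling-factorial formula for $\pr{W_\mathrm{L}=0}$ before bounding it, whereas you bound the sequential revelation directly.
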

\begin{proof}
   We assume that $I_{\mathrm{L}}$ is not
   empty (otherwise the lemma vacuously holds) and that (\ref{eq:truesuf}) is satisfied. 
   Note the importance of the assumption $c_{1}\leq r_{1}=o(N)$; if for instance $%
   r_{i}=\Omega (N)$ for all $i$, then all the entries from column $1$ could be contained in $I_{\mathrm{L}}$ and, therefore, 
   we would have $\pr{W_\mathrm{L}=0}=0$, violating the statement of the lemma.
   
   Let $\gamma =\sum _{(i,j)\in I_\mathrm{L}}r_ic_j/N$.
   We show that $\gamma,|I_\mathrm{L}|=O(1)$. For all sufficiently large $N$ we have
   \begin{equation}
   C
   \geq \sum_{(i,j)\in I_\mathrm{L}}\frac{r_i^{\underline{2}}c_j^{\underline{2}}}{2N^{\underline{2}}}
  \geq \sum_{(i,j)\in
   I_\mathrm{L}}\frac{r_ic_j\epsilon}{2N}
   = \frac{\gamma\epsilon}{2},  \label{eq:truesizel}
   \end{equation}
   where the second inequality is obtained from $(r_i-1)(c_j-1)\geq \epsilon N$,
   for all $(i,j)\in I_\mathrm{L}$ by the definition of $I_\mathrm{L}$. On the other hand, from the 
   definition of $I_\mathrm{L}$ we obtain 
   \begin{equation*}
   \gamma =\sum _{(i,j)\in I_\mathrm{L}}\frac{r_ic_j}{N}\geq \sum _{(i,j)\in I_\mathrm{L}%
   }\frac{(r_i-1)(c_j-1)}{N}\geq |I_\mathrm{L}|\epsilon.
   \end{equation*}
   Consequently, combining the previous estimate with (\ref{eq:truesizel}) we conclude that $\gamma,|I_\mathrm{L}|=O(1)$.
   
   It is useful to see Figure~\ref{fig:diag}(a,b) throughout this discussion. Let $%
   s_{i}=\sum_{j^{\prime }\colon (i,j^{\prime })\in I_{\mathrm{L}}}c_{j^{\prime
   }}$ and $t_{j}=\sum_{i^{\prime }\colon (i^{\prime },j)\in I_{\mathrm{L}%
   }}r_{i^{\prime }}$. In words, for any row $i\in I_{\mathrm{L}}^{\mathrm{r}}$%
   , $s_{i}$ is the sum of the column sums over all entries in $I_{\mathrm{L}}$
   corresponding to row $i$. Similarly, $t_{j}$ is defined for any column $j\in
   I_{\mathrm{L}}^{\mathrm{c}}$ as the sum of the row sums over all entries in $%
   I_{\mathrm{L}}$ corresponding to column $j$. Note that $\gamma N=\sum_{i\in
   I_{\mathrm{L}}^{\mathrm{r}}}r_{i}s_{i}=\sum_{j\in I_{\mathrm{L}}^{\mathrm{c}%
   }}c_{j}t_{j}$.
   
   We now derive a lower bound for $\pr{W_\mathrm{L}=0}$. Consider the row $1$,
   which belongs to $I_{\mathrm{L}}^{\mathrm{r}}$. The number of ways we can
   match the $r_{1}$ type-1 tokens of the first row with the $N-s_{1}$ type-2
   tokens available outside $I_{\mathrm{L}}$ is $(N-s_{1})^{\underline{r_{1}}}$%
   . Employing this reasoning for each row in $I_{\mathrm{L}}^{\mathrm{r}}$, we
   obtain (let $\ell=|I_{\mathrm{L}}^{\mathrm{r}}|$)%
   \begin{eqnarray}
   \pr{W_\mathrm{L}=0} &=&\frac{(N-s_{1})^{\underline{r_{1}}}}{N^{\underline{r_{1}}}}%
   \frac{(N-r_{1}-s_{2})^{\underline{r_{2}}}}{(N-r_{1})^{\underline{r_{2}}}}%
   \cdots \frac{(N-r_{1}-\cdots-r_{\ell-1}-s_{\ell})^{\underline{r_{\ell}}}}{%
   (N-r_{1}-\cdots-r_{\ell-1})^{\underline{r_{\ell}}}}  \notag \\
   &\geq &\left( 1-\frac{s_{1}}{N-\sum_{i^{\prime }\in I_{\mathrm{L}}^{\mathrm{r%
   }}}r_{i^{\prime }}}\right) ^{r_{1}}\cdots\left( 1-\frac{s_{\ell}}{%
   N-\sum_{i^{\prime }\in I_{\mathrm{L}}^{\mathrm{r}}}r_{i^{\prime }}}\right)
   ^{r_{\ell}}  \notag \\
   &=&\left( 1-\frac{s_{1}}{N-o(N)}\right) ^{r_{1}}\cdots\left( 1-\frac{s_{\ell}%
   }{N-o(N)}\right) ^{r_{\ell}}  \label{eq:truep2} \\
   &\geq &\func{exp}\left( -\sum_{i\in I_{\mathrm{L}}^{\mathrm{r}}}\frac{%
   r_{i}s_{i}}{[N-o(N)][1-O(s_{i}/N)]}\right)  \label{eq:truep1} \\
   &\geq &\func{exp}\left( -\gamma \right) -o(1),  \notag
   \end{eqnarray}%
   where in (\ref{eq:truep2}) we used the fact that $\sum_{i^{\prime }\in I_{%
   \mathrm{L}}^{\mathrm{r}}}r_{i^{\prime }}\leq r_{1}|I_{\mathrm{L}}^{\mathrm{r}%
   }|=o(N)$, and (\ref{eq:truep1}) comes from the fact that $(1-x)\geq \func{exp%
   }(-x/(1-x))$ for all $x\in \lbrack 0,1]$, which we apply with $%
   x=s_{i}/(N-o(N))$ for each $i\in I_{\mathrm{L}}^{\mathrm{r}}$. Moreover,
   since $r_{1}\geq c_{1}$ and (\ref{eq:truesuf}) holds, then $c_{j}=O(\sqrt{N}%
   ) $ for all $j$, and consequently, $s_{i}\leq c_{1}|I_{\mathrm{L}}^{\mathrm{c%
   }}|=O(\sqrt{N})$. This completes the proof of the lemma, since $\gamma = O(1)$.
\end{proof}
\medskip

For the term $\pr{Z_{\mathrm{S}}=0\mid W_{\mathrm{L}}=0}$, we use the fact that 
$I_{\mathrm{L}}$ contains only $O(1)$ entries to conclude that 
conditioning on $I_{\mathrm{L}}$ has a small effect. Then, we can carry
out the analysis as if no conditioning is being made, and we use the fact
that $(r_{i}-1)(c_{j}-1) < \epsilon N$ for $(i,j)\in I_{\mathrm{S}}$ to simplify the
calculations. 
%For instance, we show that the probability that an entry $%
%(i,j)\in I_{\mathrm{S}}$ is $3$ or more is $o(1)$. 
The following lemma deals with this case. 

\begin{lemma}
\label{lem:truezs} 
If (\ref{eq:truesuf}) is
satisfied, 
then $\pr{Z_{\mathrm{S}}=0\mid W_{\mathrm{L}}=0}=\Omega(1)$.
\end{lemma}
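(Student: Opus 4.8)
The plan is to reduce the statement to a Poisson approximation for the number of double edges landing in $I_\mathrm{S}$. Writing $F_\mathrm{S}=\sum_{(i,j)\in I_\mathrm{S}}\sum_{B\in\mathcal{B}_2(i,j)}\mathbf{1}(\mathcal{M}(B))$ for the number of matched double edges with indices in $I_\mathrm{S}$, an entry of $I_\mathrm{S}$ is non-binary exactly when at least one of its double edges is matched, so $\{Z_\mathrm{S}=0\}=\{F_\mathrm{S}=0\}$ and it suffices to prove $\pr{F_\mathrm{S}=0\mid W_\mathrm{L}=0}=\Omega(1)$. First I would dispose of the conditioning. By Lemma~\ref{lem:truezl} the set $I_\mathrm{L}$ has only $O(1)$ entries, and the estimate $c_j=O(\sqrt N)$ from its proof shows that the columns meeting $I_\mathrm{L}$ carry only $O(\sqrt N)$ type-2 tokens while the rows meeting $I_\mathrm{L}$ carry only $o(N)$ type-1 tokens; conditioning on $W_\mathrm{L}=0$ merely forbids the type-1 tokens of the rows in $I_\mathrm{L}^\mathrm{r}$ from being matched to the type-2 tokens of the columns in $I_\mathrm{L}^\mathrm{c}$. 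Since only $o(N)$ tokens are constrained, the joint probability that any fixed collection of double edges in $I_\mathrm{S}$ is matched changes by a factor $1+o(1)$ under the conditioning, so it is enough to analyze $F_\mathrm{S}$ in the unconditioned model; in particular $\lambda_N:=\E{F_\mathrm{S}\mid W_\mathrm{L}=0}\le\mu(N)(1+o(1))=O(1)$ by \eqref{eq:truesuf}.

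The heart of the matter is a lower bound on $\pr{F_\mathrm{S}=0}$ when $\lambda_N$ is only bounded, not small, so a first-moment or Bonferroni bound is insufficient. Here I would exploit the defining feature of $I_\mathrm{S}$: for $(i,j)\in I_\mathrm{S}$ we have $(r_i-1)(c_j-1)<\epsilon N$, so the ``same-entry'' correction computed in the proof of Proposition~\ref{prop:nec} satisfies $\varphi_1(i,j)\le 2(r_i-1)(c_j-1)/N<2\epsilon$, while $\varphi_2,\varphi_3\le0$ and $\varphi_4=o(1)$. Morally the matched double edges in $I_\mathrm{S}$ are therefore asymptotically independent and $F_\mathrm{S}$ is approximately Poisson with mean $\lambda_N$. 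I would make this precise by bounding the factorial moments: for each fixed $k$, I would show $\E{F_\mathrm{S}^{\underline{k}}\mid W_\mathrm{L}=0}\le\bigl(\lambda_N(1+O(\epsilon))\bigr)^k+o(1)$, the factor $1+O(\epsilon)$ absorbing precisely the $\varphi_1$-type overlaps and the $o(1)$ absorbing the shared-token and same-row/same-column contributions, together with a matching lower bound $\E{F_\mathrm{S}^{\underline{k}}\mid W_\mathrm{L}=0}\ge\lambda_N^k-o(1)$ from the disjoint-double-edge count. By the method of moments this forces the conditional law of $F_\mathrm{S}$ to be close to Poisson, whence $\pr{F_\mathrm{S}=0\mid W_\mathrm{L}=0}\ge e^{-\lambda_N(1+O(\epsilon))}-o(1)$. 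Since $\lambda_N\le C$, taking $\epsilon$ small makes the right-hand side a positive constant, which yields $\pr{Z_\mathrm{S}=0\mid W_\mathrm{L}=0}=\Omega(1)$.

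The hard part will be exactly this lower tail. Unlike the variance computation of Proposition~\ref{prop:nec}, controlling $\pr{F_\mathrm{S}=0}$ from below requires the near-independence of the double edges to all orders, and the configuration model's matching is not a product measure, so the factorial-moment estimates must track shared-token and same-row/same-column dependencies uniformly in $k$ and uniformly through the conditioning on $W_\mathrm{L}=0$. An attractive alternative would be to realize the random matching on a product space and invoke a Janson-type (or FKG) inequality to obtain the clean bound $\pr{F_\mathrm{S}=0}\ge\prod_{B}\bigl(1-\pr{\mathcal{M}(B)}\bigr)\ge e^{-\lambda_N(1+o(1))}$ directly, but justifying the product-space and positive-correlation structure in the permutation model is itself the delicate point. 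Either route ultimately reduces to the same fact: on $I_\mathrm{S}$ the correlations among double edges are governed by the small parameter $\epsilon$, which is what the definition of $I_\mathrm{S}$ was engineered to guarantee.
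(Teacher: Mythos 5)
Your proposal is correct in strategy and is essentially the paper's own argument: the paper's Lemma~\ref{lem:truecond} is precisely your claim that conditioning on $W_\mathrm{L}=0$ (and on any $O(1)$ matched edges) perturbs matching probabilities by $1+o(1)$, and its inclusion--exclusion over sets of $\ell$ double edges intersected with $\mathcal{P}_3^{\mathrm{c}}$ (Lemmas~\ref{lem:truep3} and~\ref{lem:pell}, truncated via Bonferroni at a fixed level $L$) is exactly the factorial-moment Poisson approximation you describe, with the same-entry overlaps controlled by powers of $\epsilon$ from the definition of $I_\mathrm{S}$ and the $\lambda=o(1)$ case split off separately. The two steps you defer as ``the hard part'' --- the uniform $1+o(1)$ conditioning estimate, which the paper proves by sampling columns in non-increasing order so that the admissible token sets are nested, and the uniform-in-$\ell$ moment bounds --- are indeed where all the work lies, but your plan for them is the one the paper carries out.
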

The proof of Lemma~\ref{lem:truezs} is rather delicate and will require additional lemmas.
We then devote the remainder of this section to prove Lemma~\protect\ref{lem:truezs}.
Note that
the proof of Proposition~\ref{prop:suf} 
follows from Lemmas~\ref{lem:truezl} and~\ref{lem:truezs}.

\topic{Proof of Lemma~\protect\ref{lem:truezs}}
We will use 
the following quantity
$$
   \lambda = \lambda(N) = \sum_{(i,j)\in I_{\mathrm{S}}} \frac{r_{i}^{\underline {2}}c_{j}^{\underline{2}}}{2N^{2}}.
$$
Clearly, $\limsup_{N\to\infty}\lambda\leq C$ given (\ref{eq:truesuf}), and $\lambda = \E{Z_\mathrm{S}}$.

Before proceeding to the proof, we need to introduce some notation. For an
integer $k\geq 1$ and any $(i,j)\in I$, let ${\mathcal{B}}_k(i,j)$ be the
set of all sets of $k$ disjoint edges between row $i$ and column $j$, which
generalizes the definition of ${\mathcal{B}}_2(i,j)$ from Section~\ref%
{sec:sketch}. A typical element of ${\mathcal{B}}_k(i,j)$ has the form $%
\{e_1,e_2,\ldots ,e_k\}$, where $e_1,e_2,\ldots ,e_k$ are disjoint edges
between row $i$ and column $j$. Recall that the edges $e_1,e_2,\ldots ,e_k$
are said to be disjoint when each edge $e_{\ell }$, $1\leq \ell \leq k$,
corresponds to a pair of tokens $\{\beta _{\ell },\beta _{\ell }^{\prime }\}$
such that $\beta _1\neq \beta _2\neq \cdots \neq \beta _k$ and $\beta
_1^{\prime }\neq \beta _2^{\prime }\neq \cdots \neq \beta _k^{\prime }$,
that is, the edges $e_1,e_2,\ldots ,e_k$ do not share tokens. Intuitively, ${%
\mathcal{B}}_k(i,j)$ is the set of all possible ways we can match $k$ edges
between row $i$ and column $j$. So, clearly 
$|{\mathcal{B}}_k(i,j)|=r_i^{\underline k}c_j^{\underline k}/k!$, for all $(i,j)\in I$ and all $k$.

For any set of edges $B$, we define ${\mathcal{M}}(B)$ as the event that all
the edges in $B$ are matched by the configuration model. Note that the
occurrence of ${\mathcal{M}}(B)$ for $B\in {\mathcal{B}}_k(i,j)$ means that
the entry $T_{i,j}\geq k$. For all $k$, define ${\mathcal{B}}_k=\bigcup
_{(i,j)\in I_\mathrm{S}}{\mathcal{B}}_k(i,j)$. Finally, for all $k\geq 1$,
define the event ${\mathcal{P}}_k=\bigcup _{B\in {\mathcal{B}}_k}{\mathcal{M}%
}(B)$. In words, ${\mathcal{P}}_k$ is the event that there is an entry $%
T_{i,j}\geq k$ in $I_\mathrm{S}$. Using our notation we can write 
\begin{equation}
\pr{Z_\mathrm{S}\geq 1\mid W_\mathrm{L}=0}= \pr{{\mathcal{P}}_2\mid
W_\mathrm{L}=0}.
\label{eq:feq}
\end{equation}

When (\ref{eq:truesuf}) is satisfied, we can show that the probability that $%
{\mathcal{P}}_3$ occurs is small. We exploit this fact to simplify the
calculations via the following trivial inequalities 
\begin{equation}
\pr{{\mathcal{P}}_2\mid W_\mathrm{L}=0}\geq \pr{{\mathcal{P}}_2\cap 
{\mathcal{P}}_3^\mathrm{c}\mid W_\mathrm{L}=0} \label{eq:triviallb}
\end{equation}
and 
\begin{equation}
\pr{{\mathcal{P}}_2\mid W_\mathrm{L}=0}\leq \pr{{\mathcal{P}}_2\cap 
{\mathcal{P}}_3^\mathrm{c}\mid W_\mathrm{L}=0} + \pr{{\mathcal{P}}%
_3\mid W_\mathrm{L}=0}.  \label{eq:trivialub}
\end{equation}

We start the proof by stating a lemma that indicates that for any two
disjoint sets of disjoint edges $B$ and $B^{\prime }$, the probability that
the edges in $B$ are matched by the configuration model conditioning on 
$B^{\prime }$ being matched and $W_\mathrm{L}=0$ is essentially the same as without
conditioning. 

\begin{lemma}
\label{lem:truecond} Let $B$ and $B^{\prime }$ be disjoint sets of
disjoint edges from $I_{\mathrm{S}}$ such that the event ${\mathcal{M}}(B\cup B^{\prime })$ has non-zero
probability when conditioned on $W_\mathrm{L}=0$. Let $k$ and $k^{\prime }$ be the
number of edges in $B$ and $B^{\prime }$, respectively. If $k,k^{\prime
}=O(1)$, we obtain 
\begin{equation}
\pr{{\mathcal{M}}(B)\mid {\mathcal{M}}(B^{\prime }),W_\mathrm{L}=0}=\frac{1+o(1)}{(N-k^{\prime })^{\underline{k}}}.  \label{eq:truel1}
\end{equation}
\end{lemma}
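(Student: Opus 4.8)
The plan is to first strip the event $\mathcal{M}(B)$ down to a statement about $W_\mathrm{L}=0$ alone. Writing out the conditional probability and using that, given $\mathcal{M}(B')$, the configuration model induces a uniformly random matching of the remaining $N-k'$ tokens, the $k$ disjoint edges of $B$ are all matched with probability exactly $1/(N-k')^{\underline{k}}$. This yields
\begin{equation*}
   \pr{{\mathcal{M}}(B)\mid {\mathcal{M}}(B'),W_\mathrm{L}=0}
   = \frac{1}{(N-k')^{\underline{k}}}\cdot
   \frac{\pr{W_\mathrm{L}=0\mid {\mathcal{M}}(B\cup B')}}{\pr{W_\mathrm{L}=0\mid {\mathcal{M}}(B')}},
\end{equation*}
so it suffices to show that the last ratio is $1+o(1)$. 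Morally this should hold because passing from conditioning on ${\mathcal{M}}(B')$ to conditioning on ${\mathcal{M}}(B\cup B')$ merely removes the $2k=O(1)$ tokens used by $B$ from the pool, which can affect the chance of avoiding a double edge inside $I_\mathrm{L}$ by only a negligible amount.

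To make this precise I would, for any set $D$ of $O(1)$ disjoint edges of $I_\mathrm{S}$, study $q(D):=\pr{W_\mathrm{L}=0\mid {\mathcal{M}}(D)}$ and prove that $q(D)=e^{-\gamma}+o(1)$ \emph{uniformly} over all such $D$, where $\gamma=\sum_{(i,j)\in I_\mathrm{L}}r_ic_j/N=O(1)$ is the quantity from the proof of Lemma~\ref{lem:truezl}. Conditioned on ${\mathcal{M}}(D)$ the remaining $N-|D|$ tokens are matched uniformly, and since the edges of $D$ lie in $I_\mathrm{S}$ they are not themselves $I_\mathrm{L}$-edges; thus $W_\mathrm{L}=0$ is exactly the event that the induced matching avoids every potential $I_\mathrm{L}$-edge among the available tokens. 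Inclusion--exclusion over partial matchings built from these forbidden edges then gives
\begin{equation*}
   q(D)=\sum_{p\ge 0}(-1)^p\,\frac{a_p(D)}{(N-|D|)^{\underline{p}}},
\end{equation*}
where $a_p(D)$ is the number of ways to choose $p$ pairwise disjoint $I_\mathrm{L}$-edges among the available tokens.

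The heart of the argument is estimating the $a_p(D)$. For $p=1$ we have $a_1(D)=\sum_{(i,j)\in I_\mathrm{L}}\tilde r_i\tilde c_j$, where $\tilde r_i\le r_i$ and $\tilde c_j\le c_j$ are the surviving token counts; since removing the $O(1)$ tokens of $D$ destroys at most $s_i=O(\sqrt N)$ forbidden edges per type-1 token and at most $t_j\le r_1|I_\mathrm{L}^\mathrm{r}|=o(N)$ per type-2 token (using the bounds from Lemma~\ref{lem:truezl}), we get $a_1(D)=\gamma N+o(N)$, hence $a_1(D)/(N-|D|)=\gamma+o(1)$ uniformly in $D$. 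For each fixed $p\ge 2$ I would show that the ordered $p$-tuples of forbidden edges failing to be disjoint are of lower order: the number sharing a type-1 token is at most $\sum_i r_i s_i^2\le O(\sqrt N)\cdot\gamma N=o(N^2)$ and the number sharing a type-2 token is at most $\sum_j c_j t_j^2\le o(N)\cdot\gamma N=o(N^2)$, against the $(\gamma N)^2=\Theta(N^2)$ total pairs; applying this pairwise gives $a_p(D)=\tfrac{(\gamma N)^p}{p!}(1+o(1))$ and therefore $a_p(D)/(N-|D|)^{\underline{p}}=\tfrac{\gamma^p}{p!}+o(1)$. Finally, the crude bound $a_p(D)\le (\gamma N)^p/p!$ dominates the alternating series by a convergent exponential one, so it may be truncated at a large constant $J$ with a tail error tending to $0$ as $J\to\infty$, uniformly in $N$ and $D$. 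Summing yields $q(D)=\sum_{p\ge0}(-1)^p\gamma^p/p!+o(1)=e^{-\gamma}+o(1)$. Applying this to $D=B'$ and $D=B\cup B'$, and noting that $e^{-\gamma}$ is bounded away from $0$ since $\gamma=O(1)$, makes the ratio $1+o(1)$, which completes the proof.

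I expect the main obstacle to be the third step: controlling the inclusion--exclusion series \emph{uniformly} in $N$ and in $D$ --- simultaneously pinning down the leading constant $\gamma^p/p!$ of each $a_p(D)$ by showing token collisions among forbidden edges are negligible, and securing a tail bound strong enough to justify truncating and passing to the limit of the alternating sum. Both hinge on the structural estimates $|I_\mathrm{L}|=O(1)$, $\gamma=O(1)$, $s_i=O(\sqrt N)$ and $t_j=o(N)$ supplied by Lemma~\ref{lem:truezl} together with~\eqref{eq:truesuf}.
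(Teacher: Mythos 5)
Your proposal is correct, but it takes a genuinely different route from the paper. The paper proves the lemma by constructing an explicit sequential sampler for the conditional law given $W_\mathrm{L}=0$ and ${\mathcal{M}}(B')$: it assigns columns in non-increasing order of column sum, uses the nestedness of the sets $X_j$ of admissible type-1 tokens to argue that each assignment within a column is uniform, and then computes the conditional probability of a single edge of $B$ directly via a two-case analysis (target column in $I_\mathrm{L}^{\mathrm{c}}$ or not), chaining over the $k$ edges. You instead factor out the exact unconditional ratio $\pr{{\mathcal{M}}(B)\mid{\mathcal{M}}(B')}=1/(N-k')^{\underline{k}}$ and reduce everything to showing that the acceptance probability $q(D)=\pr{W_\mathrm{L}=0\mid{\mathcal{M}}(D)}$ is $e^{-\gamma}+o(1)$ uniformly over $O(1)$-size $D\subseteq I_\mathrm{S}$, which you do by inclusion--exclusion over forbidden $I_\mathrm{L}$-edges; the needed ingredients ($|I_\mathrm{L}|,\gamma=O(1)$, $s_i=O(\sqrt N)$, $t_j=o(N)$, the collision counts $\sum_i r_i s_i^2$ and $\sum_j c_j t_j^2$ being $o(N^2)$, and the Bonferroni truncation) all check out, including in the degenerate case $\gamma=o(1)$ where one should read the conclusion as $a_p(D)/(N-|D|)^{\underline{p}}=\gamma^p/p!+o(1)$ rather than as a multiplicative $1+o(1)$. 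Your route buys a sharper byproduct --- it upgrades the one-sided bound $\pr{W_\mathrm{L}=0}\geq e^{-\gamma}-o(1)$ of Lemma~\ref{lem:truezl} to an asymptotic equality, uniformly under conditioning on small edge sets --- and avoids the column-ordering device entirely; the paper's argument is more elementary per edge and sidesteps the uniformity bookkeeping that your inclusion--exclusion requires.
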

\begin{proof}
   Before proceeding to the proof we show how to sample a table $T$ under the
   condition $W_\mathrm{L}=0$ and $\mathcal{M}(B^{\prime })$. If we were to condition only on $%
   \mathcal{M}(B^{\prime })$, it would suffice to disregard the tokens corresponding to the
   edges in $B^{\prime }$ and take a random pairing for the remaining tokens in
   a standard fashion. However, conditioning on $W_\mathrm{L}=0$ is more delicate. For
   example, conditioning on $W_\mathrm{L}=0$ implies that a pair of tokens from row $i$
   and column $j$ for which $(i,j)\in I_\mathrm{L}$ cannot be matched since all
   the entries in $I_\mathrm{L}$ are $0$.
   
   Define $s_i=\sum _{j^{\prime }\colon (i,j^{\prime })\in I_\mathrm{L}%
   }c_{j^{\prime }}$ and $t_j=\sum _{i^{\prime }\colon (i^{\prime },j)\in I_%
   \mathrm{L}}r_{i^{\prime }}$ (refer to Figure~\ref{fig:diag}(b) for a
   pictorial illustration of $s_i$ and $t_j$). Let $\rho _i$ be the number of
   edges in $B^{\prime }$ corresponding to row $i$, and let $\rho ^{\prime }_j$
   be the number of edges in $B^{\prime }$ corresponding to column $j$. We
   sample $T$ in a column-by-column manner, but the order according to which we
   sample the columns will matter. For each column $j$, there is a set $X_j$ of
   type-1 tokens that can be matched to type-2 tokens from column $j$ given $%
   W_\mathrm{L}=0 $ and $B^{\prime }$. Also, since the columns are in non-increasing
   order, we obtain that $X_j\subseteq X_{j^{\prime }}$ for all $j^{\prime }\in
   [j,n]$. Our strategy is to sample the columns in a non-increasing order,
   starting from column $1$ until column $|I_\mathrm{L}^\mathrm{c}|$. (Recall
   the definition $I_\mathrm{L}^\mathrm{c}=\{j\colon (i,j)\in I_\mathrm{L}\text{
   for some }i\}$.) Then, at that moment, all
   entries corresponding to a column in $I_\mathrm{L}$ have already been
   assigned. Therefore, sampling the remaining entries of the table does not
   dependent on $W_\mathrm{L}=0$ and we carry out the sampling trivially using the
   standard procedure for the configuration model.
   
   We now describe how to sample the entries from a column $j\in I_\mathrm{L}^%
   \mathrm{c}$ given that all columns from $1$ to $j-1$ have already been
   sampled. Note that there are $c_j-\rho ^{\prime }_j$ type-2 tokens still
   unmatched for column $j$. The main property we use is that $X_j\subseteq
   X_{j^{\prime }}$ for all $j^{\prime }\in [j,n]$, that is, the type-1 tokens
   that can be matched to the type-2 tokens from column $j$ can also be matched
   to any other column $j^{\prime }$ that have not yet been sampled. Therefore,
   it follows that each possible way to match the tokens from column $j$ is
   equally likely; we can take a uniformly random permutation of the type-1
   tokens in $X_j$ that have not been matched to any column $j'<j$ and select the first $c_j-\rho ^{\prime }_j$ to be
   matched to the type-2 tokens from column $j$.
   
   Recall that $k$ and $k^{\prime }$ are the cardinalities of $B$ and $%
   B^{\prime }$, respectively. Now we proceed to the proof of (\ref{eq:truel1}%
   ). First, assume $k=1$, i.e., $B=\{e_1\}=\{(\beta _1,\beta _1^{\prime })\}$,
   where $\beta _1$ is a type-1 token from some row $i$ and $\beta _1^{\prime }$
   is a type-2 token from some column $j$. We denote by ${\mathcal{I}}_j$ the
   set of entries in $I_\mathrm{L}$ corresponding to column $j^{\prime }\leq
   j-1 $. Formally, ${\mathcal{I}}_j=\{(i^{\prime },j^{\prime })\in I_\mathrm{L}%
   \colon j^{\prime }\in [1,j-1]\}$. We need to consider two cases.
   
   \par\medskip\noindent
   {\em Case 1:} $\beta _1^{\prime }$ corresponds to a column $j\in I_\mathrm{L%
   }^\mathrm{c}$.
   \par\noindent
   We write the probability that $\beta _1$ is matched to $\beta _1^{\prime }$
   as $q_1q_2$, where $q_1$ is the probability that $\beta _1$ has not been
   matched to any column $j^{\prime }\leq j-1$ and $q_2$ is the probability
   that $\beta _1$ is matched to $\beta _1^{\prime }$ given that it was not
   matched to any column $j^{\prime }\leq j-1$. We start with $q_2$. Note that
   there are $\zeta_j=N-t_j-\sum _{j^{\prime }=1}^{j-1}c_{j^{\prime }}
   -\left(k^{\prime }-\sum _{j^{\prime }=1}^{j-1}\rho ^{\prime
   }_{j^{\prime }}\right)$ type-1 tokens available to be matched to $\beta
   _1^{\prime }$. Note that $t_j \leq r_1 |I_\mathrm{L}|=o(N)$ and $\sum _{j^{\prime }=1}^{j-1}c_{j^{\prime }}-\sum
   _{j^{\prime }=1}^{j-1}\rho ^{\prime }_{j^{\prime }}=O(j\sqrt{N})$. 
   Therefore, since $j=O(1)$, we have
   $$
   q_2
   =\frac{1}{\zeta_j}
   =\frac{1}{N-k^{\prime }-o(N)}.
   $$
   For $q_1$, note first that the probability that $\beta_1$ is not matched to
   any type-2 token from a column $j'$ is
   $$
      \frac{(\zeta_{j'}-1)^{\underline {c_{j'}-\rho'_{j'}}}}{\zeta_{j'}^{\underline {c_{j'}-\rho'_{j'}}}}
      = \frac{\zeta_{j'}-c_{j'}+\rho'_{j'}}{\zeta_{j'}}
      = 1 - O(1/\sqrt{N}),
   $$
   since $k'=O(1)$. Recall that $i$ is the row associated with the token $\beta_1$. Clearly, when assigning a column $j'$ for which 
   $(i,j')\in I_\mathrm{L}$, we have that $\beta_1$ will not be matched to any token from column $j'$ since we are 
   conditioning on $W_\mathrm{L}=0$. Therefore, we obtain for $q_1$
   \begin{equation*}
   q_1=\prod _{j'\leq j-1\colon (i,j')\not\in I_\mathrm{L}}\left(1 - O(1/\sqrt{N})\right)
   =1-O(1/\sqrt{N}),
   \end{equation*}
   since $j=O(1)$. We then obtain that $\beta _1$ is
   matched to $\beta _1^{\prime }$ with probability $\frac{1}{N-k^{\prime }}%
   (1+o(1))$.

   \par\medskip\noindent
   {\em Case 2:}  $\beta _1^{\prime }$ corresponds to a column $j\not \in I_%
   \mathrm{L}^\mathrm{c}$.
   \par\noindent
   Again, with probability $1-O(1/\sqrt{N})$, $\beta _1$ was not matched
   to any type-2 token from columns in $I_\mathrm{L}^\mathrm{c}$. When this
   happens, there are still $N-\sum _{j^{\prime }\in I_\mathrm{L}^\mathrm{c}%
   }c_{j^{\prime }}-\left(k^{\prime }-\sum _{j^{\prime }\in I_\mathrm{L}^%
   \mathrm{c}}\rho ^{\prime }_{j^{\prime }}\right)=N-k^{\prime }-O(\sqrt{N})$
   type-1 tokens to be matched to $\beta _1^{\prime }$ and the probability that 
   $\beta _1$ and $\beta _1^{\prime }$ are matched is $\frac{1}{N-k^{\prime }}%
   (1+O(1/\sqrt{N}))$.
   
   \medskip
   Therefore, for $k=1$ and $k^{\prime }=O(1)$, we obtain $\pr{{\mathcal{M%
   }}(B)\mid {\mathcal{M}}(B^{\prime }),W_\mathrm{L}=0 }=\frac{1}{N-k^{\prime }}(1+o(1))$. When $k\geq 2$, let $B=\{e_1,e_2,\ldots ,e_k\}$ and $B_{\ell }$
   be the first $\ell $ edges in $B$, i.e., $B_{\ell }=\{e_1,e_2,\ldots
   ,e_{\ell }\}$. For convenience, define $B_0=\emptyset $. Therefore 
   \begin{eqnarray*}
   \pr{{\mathcal{M}}(B)\mid {\mathcal{M}}(B^{\prime }),W_\mathrm{L}=0 }&=&\prod
   _{\ell =1}^{k} \pr{{\mathcal{M}}(\{e_{\ell }\})\mid {\mathcal{M}}%
   (B_{\ell -1}),{\mathcal{M}}(B^{\prime }),W_\mathrm{L}=0} \\
   &=&\prod _{\ell =1}^{k}\left (\frac{1}{N-k^{\prime }-\ell +1}(1+o(1))\right ),
   \end{eqnarray*}
   which concludes the proof of Lemma~\ref{lem:truecond} since $k,k^{\prime
   }=O(1)$.
\end{proof}
\medskip

To simplify the calculations to follow,
we first solve the simpler case when $\lambda=o(1)$. For this we prove Lemma~\ref{lem:smalllambda} below, which gives a stronger 
result.
\begin{lemma}
   If $\limsup_{N\to\infty}\lambda<1$, we have 
   $\pr{Z_\mathrm{S}=0 \mid W_\mathrm{L}=0}=\Omega(1)$.
\label{lem:smalllambda}
\end{lemma}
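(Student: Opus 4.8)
The plan is to prove the stronger statement that $\pr{Z_\mathrm{S}=0\mid W_\mathrm{L}=0}=\Omega(1)$ whenever $\limsup_{N\to\infty}\lambda<1$, by showing that $\pr{Z_\mathrm{S}\ge 1\mid W_\mathrm{L}=0}$ is bounded away from $1$. By~\eqref{eq:feq} we have $\pr{Z_\mathrm{S}\ge 1\mid W_\mathrm{L}=0}=\pr{\mathcal{P}_2\mid W_\mathrm{L}=0}$, and since $\mathcal{P}_2=\bigcup_{B\in\mathcal{B}_2}\mathcal{M}(B)$, a first-moment (union) bound gives
\begin{equation*}
   \pr{\mathcal{P}_2\mid W_\mathrm{L}=0}\le\sum_{B\in\mathcal{B}_2}\pr{\mathcal{M}(B)\mid W_\mathrm{L}=0}.
\end{equation*}
First I would apply Lemma~\ref{lem:truecond} with $B'=\emptyset$ (so $k'=0$) and $k=2$ to each $B\in\mathcal{B}_2(i,j)$, obtaining $\pr{\mathcal{M}(B)\mid W_\mathrm{L}=0}=\frac{1+o(1)}{N^{\underline{2}}}$. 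Summing over all $B\in\mathcal{B}_2=\bigcup_{(i,j)\in I_\mathrm{S}}\mathcal{B}_2(i,j)$ and using $|\mathcal{B}_2(i,j)|=r_i^{\underline{2}}c_j^{\underline{2}}/2!$ together with $N^{\underline{2}}=N^2(1+o(1))$, the sum collapses to $(1+o(1))\lambda$.

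This yields $\pr{\mathcal{P}_2\mid W_\mathrm{L}=0}\le(1+o(1))\lambda$. Under the hypothesis $\limsup_{N\to\infty}\lambda<1$, there is a constant $\delta>0$ and an $N_0$ so that $\lambda\le 1-\delta$ for all $N\ge N_0$; hence $\pr{\mathcal{P}_2\mid W_\mathrm{L}=0}\le(1+o(1))(1-\delta)$, which is at most $1-\delta/2$ for all sufficiently large $N$. Consequently $\pr{Z_\mathrm{S}=0\mid W_\mathrm{L}=0}=1-\pr{\mathcal{P}_2\mid W_\mathrm{L}=0}\ge\delta/2>0$ for large $N$, giving the claimed $\Omega(1)$ bound.

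The only delicate point, and the reason Lemma~\ref{lem:truecond} is invoked rather than a naive computation, is controlling the conditioning on $W_\mathrm{L}=0$: one must check that each $B\in\mathcal{B}_2(i,j)$ for $(i,j)\in I_\mathrm{S}$ indeed has $\pr{\mathcal{M}(B)\mid W_\mathrm{L}=0}=\frac{1+o(1)}{N^{\underline 2}}$ uniformly, so that the $(1+o(1))$ factors may be pulled out of the sum without disturbing the leading constant. Since $k=2=O(1)$ and the number of edges in each $B$ is uniformly bounded, Lemma~\ref{lem:truecond} supplies exactly this uniform estimate, and the $o(1)$ errors aggregate harmlessly because they multiply a quantity whose $\limsup$ is finite (at most $C$ by~\eqref{eq:truesuf}). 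I expect no genuine obstacle here: the whole content is the union bound combined with the conditional first-moment estimate, and the strict inequality $\limsup\lambda<1$ is precisely what keeps the union bound below $1$. The subtlety is merely bookkeeping—ensuring the conditional probabilities are estimated via Lemma~\ref{lem:truecond} rather than assumed equal to the unconditioned $1/N^{\underline 2}$—and verifying that the vanishing error terms do not accumulate across the $O(\lambda)$-many dominant summands.
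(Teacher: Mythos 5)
Your proposal is correct and follows essentially the same route as the paper: the paper bounds $\pr{Z_\mathrm{S}\geq 1\mid W_\mathrm{L}=0}$ by $\E{Z_\mathrm{S}\mid W_\mathrm{L}=0}$ via Markov's inequality and linearity of expectation over double edges, which is the same first-moment/union bound you use, and it likewise invokes Lemma~\ref{lem:truecond} (implicitly with $B'=\emptyset$) to get the uniform $(1+o(1))/N^{\underline{2}}$ estimate before concluding from $\limsup_{N\to\infty}\lambda<1$. No substantive difference.
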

\begin{proof}
   We will show that 
   $\liminf_{N\to\infty}\pr{Z_\mathrm{S}=0 \mid W_\mathrm{L}=0}>0$. 
   Using Markov's inequality, we can write
   $$
      \liminf_{N\to\infty}\pr{Z_\mathrm{S}=0 \mid W_\mathrm{L}=0}
      = 1-\limsup_{N\to\infty}\pr{Z_\mathrm{S}\geq 1 \mid W_\mathrm{L}=0}
      \geq 1-\limsup_{N\to\infty}\E{Z_\mathrm{S} \mid W_\mathrm{L}=0}.
   $$
   Using linearity of expectation, we obtain
   $\E{Z_\mathrm{S} \mid W_\mathrm{L}=0} = \sum_{(i,j)\in I_\mathrm{S}}\sum_{B\in\mathcal{B}_2(i,j)} 
   \pr{\mathcal{M}(B) \mid W_\mathrm{L}=0}$. From Lemma~\ref{lem:truecond} we have 
   $\pr{\mathcal{M}(B) \mid W_\mathrm{L}=0} = (1+o(1))/N^{\underline 2}$, 
   which gives
   $$
      \liminf_{N\to\infty}\pr{Z_\mathrm{S}=0 \mid W_\mathrm{L}=0}
      \geq 1 - \limsup_{N\to\infty} \sum_{(i,j)\in I_\mathrm{S}} 
      \frac{r_i^{\underline{2}}c_j^{\underline{2}}}{2N^{\underline{2}}}(1+o(1))
      \geq 1-\limsup_{N\to\infty}\lambda.
   $$
\end{proof}
\medskip

From now on, we will assume that $\lambda=\Theta(1)$. The case where $\lambda$ is neither $o(1)$ nor
$\Omega(1)$ can be handled by Lemma~\ref{lem:applyssseqp} by setting $f_N(\mathbf{r},\mathbf{c})=\lambda$ and 
$\mathcal{Z}$ as the space of sequences satisfying the conditions in Proposition~\ref{prop:suf}.

Now we use Lemma~\ref%
{lem:truecond} to show that the bounds in (\ref{eq:triviallb}) and (\ref%
{eq:trivialub}) are tight up to smaller-order terms. This simplification is
the main reason for treating $I_{\mathrm{L}}$ and $I_{\mathrm{S}}$
separately.

\begin{lemma}
\label{lem:truep3} Conditional on $W_\mathrm{L}=0$, the
probability that the configuration model creates three edges for any entry
in $I_\mathrm{S}$ can be upper bounded by $\pr{{\mathcal{P}}_3\mid
W_\mathrm{L}=0} \leq \lambda \epsilon/3+o(1)$.
\end{lemma}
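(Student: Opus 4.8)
The plan is to control $\pr{\mathcal{P}_3\mid W_\mathrm{L}=0}$ by a first–moment (union) bound over all triple edges living in $I_\mathrm{S}$, and then to extract the factor $\epsilon$ from the very definition of $I_\mathrm{S}$. Since $\mathcal{P}_3=\bigcup_{B\in\mathcal{B}_3}\mathcal{M}(B)$ with $\mathcal{B}_3=\bigcup_{(i,j)\in I_\mathrm{S}}\mathcal{B}_3(i,j)$ and $|\mathcal{B}_3(i,j)|=r_i^{\underline{3}}c_j^{\underline{3}}/3!$, the union bound gives
$$
\pr{\mathcal{P}_3\mid W_\mathrm{L}=0}\leq\sum_{(i,j)\in I_\mathrm{S}}\sum_{B\in\mathcal{B}_3(i,j)}\pr{\mathcal{M}(B)\mid W_\mathrm{L}=0}.
$$

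First I would estimate each conditional probability by invoking Lemma~\ref{lem:truecond} with the conditioning edge set empty (that is, $B'=\emptyset$ and $k'=0$) and $k=3=O(1)$, which yields $\pr{\mathcal{M}(B)\mid W_\mathrm{L}=0}=(1+o(1))/N^{\underline{3}}$ uniformly over all $B\in\mathcal{B}_3$. Substituting this together with the cardinality count reduces the task to bounding the deterministic sum $(1+o(1))\sum_{(i,j)\in I_\mathrm{S}}r_i^{\underline{3}}c_j^{\underline{3}}/(3!\,N^{\underline{3}})$.

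The factor $\epsilon$ then comes out by writing $r_i^{\underline{3}}=r_i^{\underline{2}}(r_i-2)$ and $c_j^{\underline{3}}=c_j^{\underline{2}}(c_j-2)$ and applying the defining inequality of $I_\mathrm{S}$, namely $(r_i-2)(c_j-2)\leq(r_i-1)(c_j-1)<\epsilon N$ for $(i,j)\in I_\mathrm{S}$. Together with $N^{\underline{3}}=N^3(1+O(1/N))$, this yields
$$
\frac{r_i^{\underline{3}}c_j^{\underline{3}}}{3!\,N^{\underline{3}}}<\frac{\epsilon}{3}\cdot\frac{r_i^{\underline{2}}c_j^{\underline{2}}}{2N^2}\,(1+O(1/N)).
$$
Summing over $(i,j)\in I_\mathrm{S}$ turns the right-hand side into $\tfrac{\epsilon}{3}\lambda(1+o(1))$, and since $\limsup_{N\to\infty}\lambda\leq C$ by~(\ref{eq:truesuf}) the multiplicative error is absorbed into an additive $o(1)$, giving $\pr{\mathcal{P}_3\mid W_\mathrm{L}=0}\leq\lambda\epsilon/3+o(1)$, as claimed.

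I expect the only delicate point to be that the $(1+o(1))$ supplied by Lemma~\ref{lem:truecond} must be \emph{uniform} in $B$, so that it can be pulled outside a sum whose number of terms may grow with $N$; this is precisely why it matters that the error terms in the proof of Lemma~\ref{lem:truecond} are of the form $O(1/\sqrt{N})$ independent of the particular edge set. The remaining steps are routine algebra with falling factorials, the one structural input being the inequality $(r_i-1)(c_j-1)<\epsilon N$ defining $I_\mathrm{S}$, which is exactly what supplies the extra $\epsilon$ that makes $\pr{\mathcal{P}_3\mid W_\mathrm{L}=0}$ small relative to the first moment $\lambda$ of $Z_\mathrm{S}$.
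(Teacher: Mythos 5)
Your proposal is correct and follows essentially the same route as the paper: a union bound over $\mathcal{B}_3$ restricted to $I_\mathrm{S}$, the estimate $\pr{\mathcal{M}(B)\mid W_\mathrm{L}=0}=(1+o(1))/N^{\underline{3}}$ from Lemma~\ref{lem:truecond}, and the factorization $r_i^{\underline{3}}c_j^{\underline{3}}=r_i^{\underline{2}}c_j^{\underline{2}}(r_i-2)(c_j-2)$ combined with the defining inequality $(r_i-1)(c_j-1)<\epsilon N$ of $I_\mathrm{S}$ to extract the factor $\epsilon/3$. Your remark about the uniformity of the $o(1)$ term over $B$ is a valid point that the paper handles implicitly through the $O(1/\sqrt{N})$ error bounds in the proof of Lemma~\ref{lem:truecond}.
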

\begin{proof}
   For any $%
   (i,j)\in I_{\mathrm{S}}$, the number
   of ways to match three edges from $(i,j)$ is $r_{i}^{\underline{3}%
   }c_{j}^{\underline{3}}/3!$, and 
   $$
      \pr{{\mathcal{P}}_{3}\mid W_\mathrm{L}=0} \leq \sum_{(i,j)\in I_{\mathrm{S}%
      }}\sum_{B\in {\mathcal{B}}_{3}(i,j)}\pr{{\mathcal{M}}(B)\mid W_\mathrm{L}=0}.
   $$
   We use $(r_{i}-1)(c_{j}-1) < \epsilon N$
   and Lemma~\ref{lem:truecond} to conclude that for all $(i,j)\in I_{\mathrm{S}%
   }$%
   $$
   \pr{{\mathcal{P}}_{3}\mid W_\mathrm{L}=0}
   \leq\sum_{(i,j)\in I_{\mathrm{S}}}\frac{r_{i}^{\underline{3}}c_{j}^{%
   \underline{3}}}{3!}\frac{(1+o(1))}{N^{\underline{3}}}
   < \sum_{(i,j)\in I_{\mathrm{S}}}\frac{r_{i}^{\underline{2}}c_{j}^{\underline{2}}}{6N^{\underline{2}}}\epsilon(1+o(1)),
   $$
   which together with (\ref{eq:truesuf}) yields the validity of Lemma \ref{lem:truep3}.
\end{proof}
\medskip

It remains to derive a bound for the term $\pr{{\mathcal{P}}%
_2\cap {\mathcal{P}}_3^\mathrm{c}\mid W_\mathrm{L}=0}$. We apply the
inclusion-exclusion principle. Define the set ${\mathcal{D}}_{\ell }$ to
contain all possible sets of the form $\{d_1,d_2,\ldots ,d_{\ell }\}$ with $%
d_1,d_2,\ldots ,d_{\ell }$ being distinct double edges (i.e., distinct
elements from ${\mathcal{B}}_2$). Ideally, we would like $(d_1,d_2,\ldots
,d_{\ell })\in {\mathcal{D}}_{\ell }$ to represent a possible choice of $%
\ell $ double edges for $\ell $ distinct entries of the table. However, it
may be the case that, say, $d_1$ and $d_2$ are double edges for the same
entry. In this case, if $d_1$ and $d_2$ have one edge in common, then they
correspond to having $3$ edges matched for that entry. Otherwise, if $d_1$
and $d_2$ do not have an edge in common, then they correspond to having $4$
edges matched for that entry. In any of these cases, we count these terms in
the event ${\mathcal{P}}_3$, which we treat separately using Lemma~\ref%
{lem:truep3}. That is the reason why we derive the probability for ${%
\mathcal{P}}_2\cap {\mathcal{P}}_3^\mathrm{c}$ instead of working directly
with ${\mathcal{P}}_2$. This is also the main benefit we obtain from considering the 
entries in $I_\mathrm{L}$ and $I_\mathrm{S}$ separately. 
The elements of ${\mathcal{D}}_{\ell }$ that count
for the event ${\mathcal{P}}_2\cap {\mathcal{P}}_3^\mathrm{c}$ are only
those corresponding to $\ell $ double edges from $\ell $ distinct entries.
There is also one additional case. There exist terms of the form $%
D=\{d_1,d_2,\ldots ,d_{\ell }\}\in {\mathcal{D}}_{\ell }$ such that, say, $%
d_1$ and $d_2$ share a token. This happens if for example $d_1$ is a double
edge for the entry $(i,j)$, $d_2$ is a double edge for the entry $%
(i,j^{\prime })$, and one of the type-1 tokens from row $i$ contained in $%
d_1 $ is also contained in $d_2$. However, should that be the case, then $%
d_1 $ and $d_2$ cannot occur simultaneously and the event that all double
edges in $D$ are matched by the configuration model has probability $0$.

Abusing notation slightly, for an element $D\in {\mathcal{D}}_{\ell }$, we
denote by ${\mathcal{M}}(D)$ the event that all $\ell $ double edges in $D$
are matched by the configuration model. Therefore, using the
inclusion-exclusion principle, we obtain 
\begin{equation}
   \pr{{\mathcal{P}}%
   _2\cap {\mathcal{P}}_3^\mathrm{c}\mid W_\mathrm{L}=0}=\sum _{\ell \geq
   1}(-1)^{\ell +1}p_{\ell },
   \label{eq:part1}
\end{equation} where $p_{\ell }=\sum _{D\in {\mathcal{D}}_{\ell
}} \pr{{\mathcal{M}}(D)\cap {\mathcal{P}}_3^\mathrm{c}\mid W_\mathrm{L}=0}$. We will 
take a value $L>0$ sufficiently large that we will set later and consider the value of $p_\ell$ for 
$\ell\leq L$. The following lemma gives lower and upper bounds for $p_\ell$.

\begin{lemma}
\label{lem:pell}
Assume $\lambda=\Omega(1)$ and fix $L$.
Let $\xi>0$ be an arbitrarily small constant. 
We can set $\epsilon=\epsilon(\lambda,L,\xi)$ sufficiently small in the definition of $I_\mathrm{L}$ so that 
for all $\ell\leq L$ we have
\begin{equation}
   %\frac{\lambda^\ell}{\ell!}(1-\xi/L-o(1))
   \frac{\lambda^\ell}{\ell!}(1-\xi-o(1))
   \leq p_{\ell } 
   \leq \frac{\lambda^\ell}{\ell!}+o(1)
\label{eq:truepell}
\end{equation}
\end{lemma}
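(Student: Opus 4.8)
The plan is to estimate $p_\ell$ by summing the probability $\pr{\mathcal{M}(D)\cap\mathcal{P}_3^\mathrm{c}\mid W_\mathrm{L}=0}$ over all $D\in\mathcal{D}_\ell$, and to show that the dominant contribution comes from the ``generic'' elements $D=\{d_1,\ldots,d_\ell\}$ consisting of $\ell$ double edges lying in $\ell$ \emph{distinct} entries of $I_\mathrm{S}$ and using $4\ell$ pairwise disjoint tokens. For such a generic $D$ the event $\mathcal{M}(D)$ is a conjunction of $2\ell$ disjoint edges, so Lemma~\ref{lem:truecond} applies with $k=2\ell=O(1)$ (since $\ell\le L$) and gives $\pr{\mathcal{M}(D)\mid W_\mathrm{L}=0}=(1+o(1))/N^{\underline{2\ell}}$. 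Intersecting with $\mathcal{P}_3^\mathrm{c}$ only removes an $o(1)$ fraction of this probability, because Lemma~\ref{lem:truep3} bounds $\pr{\mathcal{P}_3\mid W_\mathrm{L}=0}$ by $\lambda\epsilon/3+o(1)$, which we can make arbitrarily small by choosing $\epsilon$ small.

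First I would set up the counting. The number of ways to choose $\ell$ ordered double edges from $\ell$ distinct entries, divided by $\ell!$ to unorder them, is approximately
\begin{equation*}
   \frac{1}{\ell!}\left(\sum_{(i,j)\in I_\mathrm{S}}\frac{r_i^{\underline 2}c_j^{\underline 2}}{2}\right)^{\ell}
   = \frac{1}{\ell!}\left(\lambda N^{\underline 2}/(1+o(1))\right)^{\ell},
\end{equation*}
where I use $N^{\underline 2}=N^2(1+o(1))$ to match the definition of $\lambda$. Multiplying this count by the per-term probability $(1+o(1))/N^{\underline{2\ell}}$ and using $N^{\underline{2\ell}}=(N^{\underline 2})^{\ell}(1+o(1))$ for fixed $\ell$ yields the leading term $\lambda^\ell/\ell!$. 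The upper bound in~\eqref{eq:truepell} then follows by dropping the $\mathcal{P}_3^\mathrm{c}$ restriction and absorbing the degenerate configurations (double edges sharing an entry or a token) into the $o(1)$ error, since those contribute $O(1)$ fewer free tokens and hence a factor $O(1/N)$ per coincidence. For the lower bound I would start from the generic count, subtract the $\mathcal{P}_3$ contribution using $\pr{\mathcal{M}(D)\cap\mathcal{P}_3\mid W_\mathrm{L}=0}\le\pr{\mathcal{P}_3\mid W_\mathrm{L}=0}\cdot\max_D\pr{\mathcal{M}(D)\mid W_\mathrm{L}=0}$ together with Lemma~\ref{lem:truep3}, and verify that this loss is at most $\xi\lambda^\ell/\ell!$ once $\epsilon$ is small enough.

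The main obstacle will be controlling the degenerate terms in $\mathcal{D}_\ell$ carefully enough to show they are genuinely lower order, and more subtly, confirming that the $o(1)$ errors coming from Lemma~\ref{lem:truecond} are \emph{uniform} across the (growing, but polynomially bounded) number of terms in each $p_\ell$. Since $\ell\le L$ is fixed and each application of Lemma~\ref{lem:truecond} uses $k,k'=O(1)$, the per-term relative error is a single $1+o(1)$ factor, and the number of terms is $\Theta(N^{2\ell})$, so summing a uniform $(1+o(1))$ factor over them is safe; I would make this uniformity explicit. The delicate bookkeeping is that $\epsilon$ must be chosen \emph{after} fixing $L$ and $\xi$ but is allowed to depend on $\lambda$, which is consistent because we have reduced to the regime $\lambda=\Theta(1)$; the bound from Lemma~\ref{lem:truep3} then reads $\lambda\epsilon/3+o(1)$, and choosing $\epsilon$ small forces the $\mathcal{P}_3$ correction below any prescribed $\xi$, closing the gap between the lower and upper bounds in~\eqref{eq:truepell}.
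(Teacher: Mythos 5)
Your high-level plan matches the paper's: reduce to ``generic'' configurations of $\ell$ double edges in $\ell$ distinct entries, use Lemma~\ref{lem:truecond} to get a per-term probability $(1+o(1))/N^{\underline{2\ell}}$, and control the $\mathcal{P}_3$ correction by taking $\epsilon$ small. But there are two genuine gaps. The first is your claim that the non-generic configurations are ``absorbed into the $o(1)$ error'' because each coincidence costs a factor $O(1/N)$. The configurations with probability zero (shared tokens) are indeed harmless, but the real loss in the lower bound comes from pairs of double edges that lie in the \emph{same row or column} with \emph{disjoint} tokens: replacing $r_i^{\underline{2}}$ by $(r_i-2\eta_i)^{\underline{2}}$ (and similarly for columns) costs a relative error of order $r_i c_j/N$ per overlap, which in $I_\mathrm{S}$ is only bounded by $O(\epsilon)$ --- not $O(1/N)$ --- since $(r_i-1)(c_j-1)$ can be as large as $\epsilon N$. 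Summed over the $\ell-1$ previously placed double edges this gives a deficit of order $\epsilon\ell$ relative to $\lambda^{\ell}/\ell!$, which is a constant, not $o(1)$. This is exactly why the lemma's lower bound is $\tfrac{\lambda^\ell}{\ell!}(1-\xi-o(1))$ rather than $\tfrac{\lambda^\ell}{\ell!}(1-o(1))$, why $\epsilon$ must be chosen as a function of $\xi$ and $L$, and why the split into $I_\mathrm{L}$ and $I_\mathrm{S}$ is needed at all. The paper handles this with a recursion in $\ell$ that tracks the depleted row and column sums via $\eta_i,\eta'_j$ and bounds the deficit by $2\epsilon^2\ell+6\epsilon\ell+O(1/N)$ using $(r_i-1)(c_j-1)<\epsilon N$; some argument of this kind is unavoidable.

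The second gap is the inequality $\pr{\mathcal{M}(D)\cap\mathcal{P}_3\mid W_\mathrm{L}=0}\le\pr{\mathcal{P}_3\mid W_\mathrm{L}=0}\cdot\max_D\pr{\mathcal{M}(D)\mid W_\mathrm{L}=0}$, which is not valid: $\pr{A\cap B}\le\pr{A}\pr{B}$ requires negative correlation, and here the events are positively correlated --- conditioned on $\mathcal{M}(D)$, each entry of $D$ already carries a double edge, so a \emph{single} additional edge into any of those $\ell$ entries triggers $\mathcal{P}_3$. The correct route is to write $\pr{\mathcal{M}(D)\cap\mathcal{P}_3^{\mathrm{c}}\mid W_\mathrm{L}=0}=\pr{\mathcal{P}_3^{\mathrm{c}}\mid\mathcal{M}(D),W_\mathrm{L}=0}\pr{\mathcal{M}(D)\mid W_\mathrm{L}=0}$ and bound the conditional probability of $\mathcal{P}_3$ directly; the paper obtains $\epsilon(L+\lambda/3)+o(1)$, where the extra $\epsilon L$ term (absent from Lemma~\ref{lem:truep3}) is precisely the contribution of a third edge landing in one of the entries of $D$. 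Both corrections are of order $\epsilon$ and are folded into the $\xi$ in the statement, so the conclusion survives, but your accounting as written does not justify either of them.
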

\begin{proof}
   Let ${\mathcal{J}}_{\ell }$ be the set of all sets of $\ell $ distinct
   elements from $I_{\mathrm{S}}$, that is, $J\in {\mathcal{J}}_{\ell }$ has
   the form $J=\{(i_{1},j_{2}),(i_{2},j_{2}),\ldots ,(i_{\ell },j_{\ell })\}$,
   where each $(i_{k},j_{k})$ corresponds to a distinct entry from $I_{\mathrm{S%
   }}$. Now let ${\mathcal{B}}_{2}(J)$, for $J\in {\mathcal{J}}_{\ell }$, be
   the set of all possible ways we can choose one double edge from each element
   of $J$. That is, if $J=\{(i_{1},j_{2}),(i_{2},j_{2}),\ldots ,(i_{\ell
   },j_{\ell })\}$, then a typical element from $B\in {\mathcal{B}}_{2}(J)$ has
   the form $B=\{d_{1},d_{2},\ldots ,d_{\ell }\}$, where $d_{k}$ is a double
   edge corresponding to the entry $(i_{k},j_{k})$, for $1\leq k\leq \ell $.
   Recall that in the summation in (\ref{eq:truepell}), we obtain $\pr{{%
   \mathcal{M}}(D)\cap {\mathcal{P}}^\mathrm{c}_{3}\mid W_\mathrm{L}=0}=0$ for all $D\in 
   {\mathcal{D}}_{\ell }$ that contain two or more distinct double edges that
   share a token. Therefore, we obtain the following equality 
   \begin{equation}
   p_{\ell }=\sum_{D\in {\mathcal{D}}_{\ell }}\pr{{\mathcal{M}}(D)\cap {%
   \mathcal{P}}_{3}^{\mathrm{c}}\mid W_\mathrm{L}=0}=\sum_{J\in {\mathcal{J}}_{\ell
   }}\sum_{B\in {\mathcal{B}}_{2}(J)}\pr{{\mathcal{M}}(B)\cap {\mathcal{P}%
   }_{3}^{\mathrm{c}}\mid W_\mathrm{L}=0},  \label{eq:pl1}
   \end{equation}%
   where the last term translates to 
   \begin{equation}
   \pr{{\mathcal{M}}(B)\cap {\mathcal{P}}_{3}^{\mathrm{c}}\mid W_\mathrm{L}=0 }=%
   \pr{{\mathcal{P}}_{3}^{\mathrm{c}}\mid {\mathcal{M}}(B),W_\mathrm{L}=0}\pr{{\mathcal{M}}(B)\mid W_\mathrm{L}=0}.  
   \label{eq:pl2}
   \end{equation}
   
   We start with the term $\pr{{\mathcal{P}}_{3}^{\mathrm{c}}\mid {%
   \mathcal{M}}(B),W_\mathrm{L}=0 }=1-\pr{{\mathcal{P}}_{3}\mid {\mathcal{M}}%
   (B),W_\mathrm{L}=0 }$. For each $(i,j)\in J$, $B$ contains at least one double edge for the entry 
   $(i,j)$. Therefore, the probability that ${\mathcal{P}}_{3}$ happens can be
   upper bounded by the probability that another edge corresponding to some
   entry in $J$ is matched plus the probability that $3$ edges for some entry
   not in $J$ are matched, that is, $\pr{{\mathcal{P}}_{3}\mid {\mathcal{M}}(B),W_\mathrm{L}=0}$
   is at most 
   $$
      \sum_{(i,j)\in
      J}\sum_{B^{\prime }\in B_{1}(i,j)\setminus \mathbf{\cup }B}\pr{{%
      \mathcal{M}}(B^{\prime })\mid {\mathcal{M}}(B),W_\mathrm{L}=0 }+\sum_{(i,j)\not\in
      J}\sum_{B^{\prime }\in B_{3}(i,j)}\pr{{\mathcal{M}}(B^{\prime })\mid {%
      \mathcal{M}}(B),W_\mathrm{L}=0 },
   $$
   where the notation $\mathbf{\cup }B$ represents the union of all elements in 
   $B=\{d_{1},d_{2},\ldots ,d_{\ell }\}$ (recall that the $d_{i}$'s are double
   edges). Note that $\mathbf{\cup }B$ contains $2\ell $ edges. Therefore, for $%
   \ell =O(1)$ we can use Lemma~\ref{lem:truecond} for the first term and a
   derivation similar to the proof of Lemma~\ref{lem:truep3} for the second
   term to obtain 
   \begin{eqnarray}
   \lefteqn{\pr{{\mathcal{P}}_{3}\mid {\mathcal{M}}(B),W_\mathrm{L}=0}}\nonumber\\
    &\leq&\sum_{(i,j)\in J}\frac{(r_{i}-2)(c_{j}-2)}{N-2\ell }(1+o(1))+\sum_{(i,j)\not\in J}\frac{r_{i}^{\underline{3}}c_{j}^{\underline{3}}}{%
   3!(N-2\ell )^{\underline{3}}}(1+o(1))\\
   &\leq& \epsilon(L+\lambda/3) + o(1),  \label{eq:truep3}
   \end{eqnarray}%
   uniformly in $J$ and $B$.
   
   Now we turn to the term $\sum _{J\in {\mathcal{J}}_{\ell }}\sum _{B\in {%
   \mathcal{B}}_2(J)} \pr{{\mathcal{M}}(B)\mid W_\mathrm{L}=0}$, which corresponds to an upper bound
   for the right hand side of (\ref{eq:pl1}). Our goal is to
   write this term recursively in $\ell $. First, notice that the case $\ell =1$
   reduces to 
   \begin{equation*}
   \sum _{J\in {\mathcal{J}}_1}\sum _{B\in {\mathcal{B}}_2(J)} \pr{{%
   \mathcal{M}}(B)\mid W_\mathrm{L}=0}=\sum _{(i,j)\in I_\mathrm{S}}\frac{%
   r_i^{\underline 2}c_j^{\underline 2}}{2N^{\underline 2}}(1+o(1))=\lambda +o(1),
   \end{equation*}
   where $\lambda =\sum _{(i,j)\in I_\mathrm{S}}r_i^{\underline
   2}c_j^{\underline 2}/(2N^{\underline 2})$ as defined in Lemma~\ref%
   {lem:truezs}.
   
   Now, consider a fixed $J\in {\mathcal{J}}_{\ell }$. Note that we can write $%
   J=J^{\prime }\cup (i,j)$ where $J^{\prime }\in {\mathcal{J}}_{\ell -1}$ and $%
   (i,j)\in I_\mathrm{S}\setminus J^{\prime }$. Note also that there are $\ell $
   possible such values for $J^{\prime }\subset J$. For a fixed $J'$, 
   let $\eta_i=\eta _i(J^{\prime })$
   be the number of elements in $J^{\prime }$ corresponding to an entry in row $%
   i$, that is, $\eta _i=|\{(k,k^{\prime })\in J^{\prime }\colon
   k=i\}|$. Likewise, let $\eta_j'=\eta _j^{\prime }(J^{\prime })$ be the number of
   elements in $J^{\prime }$ corresponding to an entry in column $j$.
   Therefore, given $B^{\prime }\in {\mathcal{B}}_2(J^{\prime })$, 
   the sum of row $i$ becomes $r_i-2\eta _i$ 
   and the sum of column $j$ becomes $%
   c_j-2\eta_j^{\prime }$.
   For $\ell =O(1)$, we can apply Lemma~\ref{lem:truecond} to derive the
   following equality 
   \begin{eqnarray}
   \lefteqn {\sum _{J\in {\mathcal{J}}_{\ell }}\sum _{B\in {\mathcal{B}}_2(J)} 
   \operatorname {Pr}[{\mathcal{M}}(B)\mid W_\mathrm{L}=0 ]} \nonumber\\
   &=&\frac{1}{\ell }\sum _{J^{\prime }\in {\mathcal{J}}_{\ell -1}}\sum
   _{B^{\prime }\in {\mathcal{B}}_2(J^{\prime })} \pr{{\mathcal{M}}%
   (B^{\prime })\mid W_\mathrm{L}=0}\sum _{(i,j)\in I_\mathrm{S}\setminus J^{\prime
   }}\sum _{B^{\prime \prime }\in {\mathcal{B}}_2(i,j)} \pr{{\mathcal{M}}%
   (B^{\prime \prime })\mid {\mathcal{M}}(B^{\prime }),W_\mathrm{L}=0} \nonumber\\
   &=&\frac{1}{\ell }\sum _{J^{\prime }\in {\mathcal{J}}_{\ell -1}}\sum
   _{B^{\prime }\in {\mathcal{B}}_2(J^{\prime })} \pr{{\mathcal{M}}%
   (B^{\prime })\mid W_\mathrm{L}=0}(1+o(1)) %\nonumber\\
%    &&\times 
%    \sum _{(i,j)\in I_\mathrm{S}\setminus J^{\prime }}\frac{(r_i-2\eta
%    _i)^{\underline
%    2}(c_j-2\eta ^{\prime }_j)^{\underline 2}}{2(N-2\ell +2)^{\underline 2}}.
    \sum _{(i,j)\in I_\mathrm{S}\setminus J^{\prime }}\frac{(r_i-2\eta
    _i)^{\underline
    2}(c_j-2\eta ^{\prime }_j)^{\underline 2}}{2(N-2\ell +2)^{\underline 2}}.
   \label{eq:recursion}
   \end{eqnarray}
   Note that only pairs $(i,j)$ with $r_i,c_j \geq 2$ count for the last sum in (\ref{eq:recursion}). So in what follow we 
   assume that $r_i,c_j\geq 2$.
   Note that $\sum
   _i\eta _i=\sum _{j}\eta _j^{\prime }=\ell
   -1$, and letting
   \begin{equation}
      X=\sum_{(i,j)\in I_\mathrm{S}\setminus J'}\frac{r_i^{\underline{2}}c_j^{\underline{2}}}{2(N-2\ell+2)^{\underline{2}}}
      -\sum_{(i,j)\in I_\mathrm{S}\setminus J'}\frac{(r_i-2\eta_i)^{\underline{2}}(c_j-2\eta_j')^{\underline{2}}}{2(N-2\ell+2)^{\underline{2}}},
      \label{eq:defx}
   \end{equation}
   we have
   $$
      \sum _{(i,j)\in I_\mathrm{S}\setminus J'}\frac{(r_i-2\eta
         _i)^{\underline 2}(c_j-2\eta_j')^{\underline 2}}{2(N-2\ell
         +2)^{\underline 2}}
      =\sum _{(i,j)\in I_\mathrm{S}}\frac{r_i^{\underline 2}c_j^{\underline 2}}{%
      2(N-2\ell +2)^{\underline 2}}-\sum _{(i,j)\in J^{\prime }}\frac{%
      r_i^{\underline 2}c_j^{\underline 2}}{2(N-2\ell +2)^{\underline 2}} -X.
   $$
   If we apply the condition $(r_i-1)(c_j-1)< \epsilon N$ and use the inequality $y \leq 2(y-1)$ valid for all $y \geq 2$, 
   we can write the second term in the right hand side above as 
   $$
      \sum _{(i,j)\in J^{\prime }}\frac{%
      r_i^{\underline 2}c_j^{\underline 2}}{2(N-2\ell +2)^{\underline 2}}
      \leq \sum _{(i,j)\in J^{\prime }}\frac{}{}
      \frac{2(r_i-1)^2(c_j-1)^2}{N^2}(1+O(1/N))
      \leq 2\epsilon^2(\ell-1) + O(1/N).
   $$
   If we expand $X$ in (\ref{eq:defx}) we obtain $X=X_1-X_2$, where 
   $$
      X_1 = \sum_{(i,j)\in I_\mathrm{S}\setminus J'}
         \frac{2 \eta_j'(2 c_j-1)r_i^{\underline{2}} + 2 \eta_i (2 r_i-1) c_j^{\underline{2}} + 8 \eta_i {\eta_j'}^2 (2 r_i-1) + 8 \eta_i^2 \eta_j' (2 c_j-1)}{2(N-2\ell+2)^{\underline{2}}}
   $$
   and
   $$
      X_2 = \sum_{(i,j)\in I_\mathrm{S}\setminus J'}
         \frac{4 {\eta_j'}^2 r_i^{\underline{2}} + 4 \eta_i^2c_j^{\underline{2}} + 4\eta_i\eta_j'(2 r_i -1)(2c_j-1) + 16\eta_i^2{\eta_j'}^2}{2(N-2\ell+2)^{\underline{2}}}.
   $$
   Since $r_i,c_j\geq 2$, we have that $(r_i-1)(2 c_j-1)$ and $(2r_i-1)(c_j-1)$ can both be upper bounded by $3(r_i-1)(c_j-1)$.
   Then, applying $(r_i-1)(c_j-1) < \epsilon N$ to the first two terms of $X_1$ and using the fact that 
   $\sum_{j=1}^n \eta_i{\eta'_j}^2 \leq \ell^2\sum_{j=1}^n \eta'_j\leq \ell^3$ and similarly 
   $\sum_{i=1}^m \eta_i^2\eta'_j \leq \ell^2\sum_{i=1}^m \eta_i\leq \ell^3$ for the last two terms of $X_1$, 
   we have $X_1\leq 6\epsilon \ell + O(1/N)$, and using the simple fact $X_2 \geq 0$ we get
   \begin{equation}
      \sum_{(i,j)\in I_\mathrm{S}\setminus J'}\frac{(r_i-2\eta
         _i)^{\underline 2}(c_j-2\eta _j')^{\underline 2}}{2(N-2\ell
         +2)^{\underline 2}}
      \geq \lambda - 2\epsilon^2\ell - 6\epsilon \ell - O(1/N).
      \label{eq:term1}
   \end{equation}
   Putting~(\ref{eq:recursion}) and~(\ref{eq:term1}) together, and iterating this procedure 
   $\ell$ times for $\ell =O(1)$ we obtain 
   \begin{equation}
      \frac{(\lambda - 2\epsilon^2L -6\epsilon L)^\ell}{\ell!}-o(1)
      \leq 
      \sum _{J\in {\mathcal{J}}_{\ell }}\sum _{B\in {\mathcal{B}}_2(J)} \pr{{%
      \mathcal{M}}(B)\mid W_\mathrm{L}=0}
      \leq \frac{\lambda ^{\ell }}{\ell!} + o(1).
   \label{eq:trueb}
   \end{equation}
   Since $\lambda=\Omega(1)$ and $\epsilon$ is sufficiently small, we can find a constant $c$ such that 
   $$
      \frac{(\lambda - 2\epsilon^2L -6\epsilon L)^\ell}{\ell!}
      \geq \frac{\lambda^\ell}{\ell!}(1-c\epsilon L \ell)
      \geq \frac{\lambda^\ell}{\ell!}(1-c\epsilon L^2).
   $$
   
   Putting (\ref{eq:pl2}), (\ref{eq:truep3}), and (\ref{eq:trueb}) together,
   and plugging the result into (\ref{eq:pl1}), we obtain 
   $$
      \frac{\lambda^\ell}{\ell!}(1-c\epsilon L^2-\epsilon(L+\lambda/3)-o(1))
      \leq p_{\ell } 
      \leq \frac{\lambda^\ell}{\ell!}+o(1).
   $$
   This concludes the proof of the
   lemma since we can set $\epsilon$ sufficiently small so that $\xi \leq \epsilon(cL^2+L+\lambda/3)$.
\end{proof}
\medskip

For some fixed constant $L$, we can use Bonferroni inequality to obtain a lower bound for $1- \pr{{%
\mathcal{P}}_2\cap {\mathcal{P}}_3^\mathrm{c}\mid W_\mathrm{L}=0}=1-\sum _{\ell \geq
1}(-1)^{\ell +1}p_{\ell }$ via 
% \begin{equation*}
% 1-\sum _{\ell =1}^L(-1)^{\ell +1}p_{\ell }
% \geq e^{-\lambda }-\lambda^L/L!-\xi-o(1),
% \end{equation*}
\begin{equation*}
   1-\sum _{\ell =1}^L(-1)^{\ell +1}p_{\ell }
   \geq 1-\sum_{\ell=1}^L(-1)^{\ell+1}\frac{\lambda^\ell}{\ell!}-\sum_{\ell=1}^L\frac{\lambda^\ell\xi}{\ell!}-o(1)
   \geq e^{-\lambda }-\lambda^L/L!-e^{\lambda}\xi-o(1),
\end{equation*}
where $\xi$ is obtained from Lemma~\ref{lem:pell}.

Recall that (\ref{eq:truesuf}) implies $\lambda =O(1)$. For an arbitrarily
small constant $\delta >0$ independent of $N$ (as long as $N$ is
sufficiently large) we can set $L$ large enough so that 
$\lambda^L/L!\leq \delta/4$. 
We can also set $\xi$ in Lemma~\ref{lem:pell} so that $e^{\lambda}\xi\leq\delta/4$. 
Now, having fixed $L$ and $\xi$, we can set $\epsilon$ small enough so that 
Lemma~\ref{lem:pell} can be applied and in addition we have $\lambda\epsilon/3\leq \delta/4$. 
Then we put together (\ref{eq:feq}) and (\ref{eq:trivialub}), and use Lemma~\ref{lem:truep3}
and (\ref{eq:part1}) to 
obtain for large enough $N$ 
\begin{eqnarray*}
\pr{Z_{\mathrm{S}}=0\mid W_{\mathrm{L}%
}=0}
&\geq& 1-\sum _{\ell =1}^L(-1)^{\ell +1}p_{\ell } - \lambda\epsilon/3-o(1)\\
&\geq& e^{-\lambda }-\frac{3\delta}{4}-o(1)\\
&\geq& e^{-\lambda }-\delta,
\end{eqnarray*}%
%for any arbitrarily small constant $\delta >0$ for all $N$ large enough,
which concludes the proof of Lemma~\ref{lem:truezs}.
Then, Proposition~\ref{prop:suf} follows immediately from Lemmas~\ref{lem:truezl} and~\ref{lem:truezs}.

%===============================================================================
%===============================================================================
%===============================================================================
\section{Proof of Proposition~\protect\ref{prop:extra}}
\label{sec:extra}

The derivations in this section require a careful analysis of subsequences. For this reason, in this section, 
we will use the full notation for input sequences as described in the paragraph of~\eqref{eq:subsequence}.
Let $S=(\mathbf{r}(\ell),\mathbf{c}(\ell),N(\ell))_{\ell}$ be an input sequence. Recall the definition 
$$
   \kappa(S) =\min \{i\geq 1\colon r_{i}(\ell)=o(N(\ell)) \text{ as $\ell\to\infty$}\}
$$ 
from~\eqref{eq:kappa} 
and note that
\begin{equation}
   \kappa(S) \geq \kappa(S') \text{ for any subsequence $S'=(\mathbf{r}'(\ell'),\mathbf{c}'(\ell'),N'(\ell'))_{\ell'}$ of $S$}.
   \label{eq:kappamonotone}
\end{equation}
Now let 
$$
   \kappa'(S)=\min \Big\{j\geq 1\colon \limsup_{\ell\to \infty }c_{j}(\ell)\leq 1\Big\}.
$$
We define $\kappa (S)$ and $\kappa'(S)$ since it suffices to look at the entries in $[1,\kappa(S)-1]\times [ 1,\kappa'(S)-1]$.

Throughout this section, we assume that, as $\ell\to\infty$, 
we have 
\begin{equation}
   \sum_{(i,j)\in I}r_{i}^{\underline{2}}(\ell)c_{j}^{\underline{2}}(\ell)=O(N^{2}(\ell)) 
   \quad\text{ and }\quad \liminf_{\ell\to\infty}c_{1}(\ell)\geq 2 
   \quad\text{ and }\quad r_{1}(\ell)=\Omega (N(\ell)).
   \label{eq:condsec}
\end{equation}
These conditions immediately imply that 
\begin{equation}
   \kappa(S) ,\kappa'(S)\geq 2 
   \quad\text{ and }\quad
   c_{1}(\ell),\kappa'(S)=O(1) \text{ as $\ell\to\infty$}.
   \label{eq:usefulfacts}
\end{equation}

% We also assume that $\kappa<\infty$. Note that, since $r_1=\Omega(N)$, we have from condition~\ref{it:n1} that 
% $c_1=O(1)$. Therefore, condition~\ref{it:n2} is satisfied for every subsequence
% $({\bf r}',{\bf c}')_N$ of $({\bf r},{\bf c})_N$ for which $\kappa(({\bf r}',{\bf c}')_N)=\infty$, and it suffices to consider the case $\kappa<\infty$.

%The proof of Proposition~\ref{prop:extra} will follow from four lemmas. 
First we show in Lemma~\ref{lem:extranec} that if condition~\ref{it:n1} of Theorem~\ref{thm:main} is true, then 
condition~\ref{it:n2} is necessary.
\begin{lemma}
\label{lem:extranec} For any sequence $S=(\mathbf{r}(\ell),\mathbf{c}(\ell),N(\ell))_{\ell}$ satisfying the conditions in~\eqref{eq:condsec}, if 
there exists a subsequence $S'=(\mathbf{r}'(\ell'),\mathbf{c}'(\ell'),N'(\ell'))_{\ell'}$ of 
$S$ for which $\sum_{i=\kappa(S')}^\infty r'_i(\ell')=o(N'(\ell'))$ as $\ell'\to\infty$
and $\limsup_{\ell'\to \infty} c'_1(\ell) \geq \kappa(S')$,
then $\liminf_{\ell\to\infty}\pr{T\in \Omega _{\mathbf{r},\mathbf{c}}}=0$.
\end{lemma}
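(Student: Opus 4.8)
The plan is to reduce the whole statement to a pigeonhole argument on the type-1 tokens matched to column~$1$. First I would fix $k=\kappa(S')$ once and for all. Since $c_1(\ell)=O(1)$ by~\eqref{eq:usefulfacts}, the sequence $(c_1'(\ell'))_{\ell'}$ is a bounded sequence of integers, so the hypothesis $\limsup_{\ell'\to\infty}c_1'(\ell')\geq k$ lets me extract a further subsequence $S''=(\mathbf{r}''(\ell''),\mathbf{c}''(\ell''),N''(\ell''))_{\ell''}$ of $S'$ along which $c_1''(\ell'')=v$ is a fixed integer with $v\geq k$. Crucially, I never need $\kappa(S'')$: the only fact about small rows I use is the hypothesis $\sum_{i=k}^\infty r_i'(\ell')=o(N'(\ell'))$, which, being a statement about a limit, passes to the subsequence $S''$ and yields $\sum_{i=k}^\infty r_i''(\ell'')=o(N''(\ell''))$. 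I would call rows $1,\dots,k-1$ the \emph{large} rows and rows $\geq k$ the \emph{small} rows; thus the small rows carry a total of only $o(N'')$ type-1 tokens, while there are exactly $k-1$ large rows.

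The key reduction is that, for $T$ to be binary, every entry $T_{i,1}$ must be $0$ or $1$, i.e., the $v$ type-2 tokens of column~$1$ must be matched to type-1 tokens lying in $v$ \emph{distinct} rows. Hence
$$
   \pr{T\in\Omega_{\mathbf{r},\mathbf{c}}}\leq \pr{\text{the $v$ type-2 tokens of column $1$ are matched to distinct rows}}.
$$
I would then split this event according to whether all $v$ of the matched type-1 tokens come from large rows or at least one comes from a small row.

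For the first case, observe that $v\geq k>k-1$ tokens cannot possibly occupy $v$ \emph{distinct} large rows when only $k-1$ large rows exist; by pigeonhole this contribution is exactly $0$. For the second case I would reveal the matches of the $v$ column-$1$ tokens one at a time: when the $t$-th token is revealed ($1\le t\le v$) it is matched to a uniformly random type-1 token among the $N''-t+1$ still-unmatched ones, so the probability it lands in a small row is at most $\bigl(\sum_{i\geq k}r_i''\bigr)/(N''-v+1)=o(1)$, using $v=O(1)$ so the denominator stays $N''\bigl(1-o(1)\bigr)$. A union bound over the $v=O(1)$ tokens shows the second case has probability $o(1)$. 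Combining the two cases gives $\pr{T\in\Omega_{\mathbf{r},\mathbf{c}}}=o(1)$ as $\ell''\to\infty$; since $S''$ is a subsequence of $S$, this forces $\liminf_{\ell\to\infty}\pr{T\in\Omega_{\mathbf{r},\mathbf{c}}}=0$, as required.

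The argument is short, and the only real care needed is the bookkeeping with subsequences. The reason for freezing $k=\kappa(S')$ rather than working with $\kappa$ of the sub-subsequence is precisely that $\kappa$ is only monotone in the direction recorded in~\eqref{eq:kappamonotone} and may drop upon passing to $S''$, whereas both the tail-mass hypothesis and the column bound survive the passage to $S''$ unchanged. I expect the (minor) obstacle to be the rigorous justification of the $o(1)$ small-row estimate under the sequential conditioning of the configuration model, which is exactly the point where the boundedness $v=c_1=O(1)$ from~\eqref{eq:usefulfacts} is essential.
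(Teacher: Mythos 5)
Your proof is correct, and the core mechanism is the same as the paper's: column~$1$ carries at least $\kappa$ tokens, the rows below index $\kappa$ attract only an $o(1)$ fraction of them because they hold $o(N)$ of the type-1 tokens, and pigeonhole then forces a collision among the remaining $\kappa-1$ rows. The execution, however, is genuinely leaner in two respects. First, by freezing $k=\kappa(S')$ you sidestep the paper's step of transferring the tail-mass hypothesis from $\kappa(S')$ down to $\kappa(S'')$ (the paper must argue, via the monotonicity in~\eqref{eq:kappamonotone} and the bound $\sum_{i=\kappa(S'')}^{\kappa(S')-1} r''_i \leq r''_{\kappa(S'')}(\kappa(S')-\kappa(S''))=o(N'')$, that property~(i) holds for the smaller index); your observation that the pigeonhole only needs ``at most $k-1$ rows of non-negligible mass'' rather than ``exactly the rows that are $\Omega(N)$'' makes that transfer unnecessary. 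Second, the paper establishes the event $\bigcap_{i\geq\kappa(S'')}\bigcap_{j<\kappa'(S'')}\{T_{i,j}=0\}$ with probability $1-o(1)$ via an exact product formula over the first $\kappa'(S'')-1$ columns and then conditions on it, whereas you bound only column~$1$ directly with a sequential union bound over its $v=O(1)$ tokens. Both routes rely on $c_1=O(1)$ from~\eqref{eq:usefulfacts} at exactly the point you identify. Your version is a valid and somewhat more economical write-up of the same argument.
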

\begin{proof}
   Since $c_1(\ell)=O(1)$ as $\ell\to\infty$ and $\limsup_{\ell'\to \infty} c'_1(\ell) \geq \kappa(S')$, we have $\kappa(S')<\infty$.
   Also, there exists a subsequence $S''=({\bf r}''(\ell''),{\bf c}''(\ell''),N''(\ell''))_{\ell''}$ of $S'$ having the following two properties:
   \begin{enumerate}[(i)]
      \item $\sum_{i=\kappa(S'')}^\infty r''_i(\ell'')=o(N''(\ell''))$ as $\ell''\to\infty$.
      \item $\lim_{\ell''\to \infty} c''_1(\ell'') \geq \kappa(S')\geq \kappa(S'')$. 
   \end{enumerate}
   Property~(ii) follows immediately by the definition of $\limsup$ and the monotonicity of $\kappa$ from~\eqref{eq:kappamonotone}. 
   Now note that, to establish property~(i), we have that, as $\ell''\to\infty$,
   \begin{align*}
      \sum_{i=\kappa(S'')}^\infty r''_i(\ell'')
      &= \sum_{i=\kappa(S'')}^{\kappa(S')-1} r''_i(\ell'') + \sum_{i=\kappa(S')}^\infty r''_i(\ell'')\\
      &\leq r''_{\kappa(S'')}(\ell'') (\kappa(S')-\kappa(S'')) + o(N''(\ell''))
      =o(N''(\ell'')),
   \end{align*}
   where the inequality uses the fact that $r''_i(\ell'')$ is non-increasing with $i$ for any fixed $\ell''$ and that $\sum_{i=\kappa(S')}^\infty r'_i(\ell')=o(N'(\ell'))$ as $\ell'\to\infty$.
   Then, the last step follows since $\kappa(S')<\infty$ and $r_i''(\ell'')=o(N''(\ell''))$ as $\ell''\to\infty$ for any $i \geq \kappa(S'')$ by the definition of $\kappa$.
   We will now show that 
   $$
      \lim_{\ell''\to\infty}\pr{T\in \Omega _{\mathbf{r}'',\mathbf{c}''}}=0,
   $$
   which clearly implies that $\liminf_{N\to\infty}\pr{T\in \Omega _{\mathbf{r},\mathbf{c}}}=0$.
   Using the fact that $r_1(\ell)=\Omega(N(\ell))$, which gives that $r''_1(\ell'')=\Omega(N''(\ell''))$, we have 
   \begin{equation}
      O(1)= \frac{1}{r''_1(\ell'')(r''_1(\ell'')-1)}\sum _{(i,j)\in I}{r_i''(\ell'')}^{\underline 2}{c_j''(\ell'')}^{\underline 2}
      \geq \sum_{j=1}^{\kappa'(S'')-1} c''_j(\ell'')(c''_j(\ell'')-1).
      \label{eq:sumc}
   \end{equation}
   Then, we have the following result for the probability that all 
   entries $(i,j)$ with $i\geq\kappa(S'')$ and $j\leq \kappa'(S'')-1$ are $0$:
   \begin{eqnarray}
   \pr{\bigcap _{i=\kappa(S'')}^\infty\bigcap_{j=1}^{\kappa'(S'')-1}
   \{T_{i,j}=0\}}
   &=&\prod _{j=1}^{\kappa'(S'')-1}\frac{(\sum_{i=1}^{\kappa(S'')-1}r''_i(\ell'')-\sum
   _{j^{\prime }=1}^{j-1}c''_{j^{\prime }}(\ell''))^{\underline {c''_j(\ell'')}}}{(N''(\ell'')-\sum
   _{j^{\prime }=1}^{j-1}c''_{j^{\prime }}(\ell''))^{\underline {c''_j(\ell'')}}}\nonumber\\
   &=&\prod _{j=1}^{\kappa'(S'')-1}\frac{(N''(\ell'')-\sum_{i=\kappa(S'')}^{\infty}r''_i(\ell'')-\sum
   _{j^{\prime }=1}^{j-1}c''_{j^{\prime }}(\ell''))^{\underline {c''_j(\ell'')}}}{(N''(\ell'')-\sum
   _{j^{\prime }=1}^{j-1}c''_{j^{\prime }}(\ell''))^{\underline {c''_j(\ell'')}}}\nonumber\\
   &=&1-o(1) \text{ as $\ell''\to\infty$},  \label{eq:step1}
   \end{eqnarray}
   since $c''_j(\ell'')=O(1)$ by~\eqref{eq:usefulfacts}, $\sum _{j^{\prime }=1}^{j-1}c''_{j^{\prime }}(\ell'')=O(1)$ for 
   $j\leq\kappa'(S'')-1$ by~\eqref{eq:sumc} and $\sum_{i=\kappa(S'')}^{\infty}r''_i(\ell'')=o(N''(\ell''))$ by property~(i) of $S''$. 
   Conditioned on the event studied in~\eqref{eq:step1}, the entry $T_{i,1}$ is not zero only if $%
   r''_i(\ell'')$ is not $o(N''(\ell''))$ (i.e., $i \leq \kappa(S'')-1$). 
   Thus, since there are $\kappa(S'')-1$ such rows and 
   $\lim_{\ell''\to\infty}c''_1(\ell)\geq \kappa(S'')$, with probability $1-o(1)$,
   there will be a non-binary entry $(i,1)$ with $i\leq \kappa(S'')-1$ for infinitely many values of $\ell''$.
   This gives that 
   $$
      \lim_{\ell''\to\infty} \pr{T \in \Omega_{\mathbf{r}'',\mathbf{c}''}}
      \leq 1-\lim_{\ell''\to\infty}\pr{\bigcap_{i=\kappa(S'')}^\infty \bigcap_{j=1}^{\kappa'(S'')-1}\{T_{i,j}=0\}}=0,
   $$
   which concludes the proof of the lemma.
\end{proof}

It remains to show that condition~\ref{it:n2} is sufficient.
We will first give three auxiliary lemmas and then will apply the lemmas to complete the proof of Proposition~\ref{prop:extra}. 
In the first lemma, 
we consider the case $\sum_{i=\kappa(S)}^\infty r_i(\ell)=\Omega(N(\ell))$ and show that 
this implies $\pr{T\in \Omega _{\mathbf{r},\mathbf{c}}}=\Omega(1)$. 
For this case, we have $\kappa(S)<\infty$.

\begin{lemma}
\label{lem:extrasuf} For any sequence $S=({\bf r}(\ell),{\bf c}(\ell),N(\ell))_\ell$, if the conditions in~\eqref{eq:condsec} are satisfied and 
$\sum_{i=\kappa(S)}^\infty r_i(\ell)=\Omega (N(\ell))$ as $\ell\to\infty$,  
then $\pr{T\in \Omega _{\mathbf{r},\mathbf{c}}}=\Omega(1)$.
\end{lemma}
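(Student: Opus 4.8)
The plan is to exploit the fact that, under~\eqref{eq:condsec} and~\eqref{eq:usefulfacts}, every column sum is uniformly bounded, so that a non-binary entry can only be created inside the $O(1)$ ``heavy'' columns. Since $c_1(\ell)=O(1)$ and $\kappa'(S)=O(1)$ by~\eqref{eq:usefulfacts}, for all large $\ell$ every column $j\geq\kappa'(S)$ has $c_j(\ell)\leq 1$ and hence can never host a non-binary entry. Call a type-2 token \emph{special} if it belongs to a column $j\leq\kappa'(S)-1$; the number of special tokens is $M=\sum_{j=1}^{\kappa'(S)-1}c_j(\ell)\leq(\kappa'(S)-1)c_1(\ell)=O(1)$. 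Because a non-binary entry requires two type-2 tokens of one column to be matched into one common row, the event $\mathcal{F}$ that the $M$ special tokens are matched to type-1 tokens lying in $M$ \emph{distinct} rows already implies $\{T\in\Omega_{\mathbf{r},\mathbf{c}}\}$. It therefore suffices to prove $\pr{\mathcal{F}}=\Omega(1)$.

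To bound $\pr{\mathcal{F}}$ I would reveal the partners of the special tokens one at a time and control, at each step, the conditional probability that the new token lands in a fresh row. The key quantity is the total mass of the ``big'' rows, $B=\sum_{i=1}^{\kappa(S)-1}r_i(\ell)=N(\ell)-\sum_{i=\kappa(S)}^{\infty}r_i(\ell)$, which by the hypothesis $\sum_{i\geq\kappa(S)}r_i(\ell)=\Omega(N(\ell))$ satisfies $B\leq(1-c)N(\ell)$ for some constant $c>0$ and all large $\ell$. When the $k$-th special token is revealed, the rows already hit form a set of $k-1\leq M-1=O(1)$ distinct rows whose combined row sum is at most $r_1(\ell)+\cdots+r_{k-1}(\ell)\leq B+O(1)\cdot r_{\kappa(S)}(\ell)=(1-c+o(1))N(\ell)$, where I used that $r_{\kappa(S)}(\ell)=o(N(\ell))$ by the definition of $\kappa(S)$. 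Consequently the number of unmatched type-1 tokens lying in admissible rows is $N(\ell)-\sum_{l<k}r_{i_l}(\ell)\geq(c-o(1))N(\ell)$, out of $N(\ell)-(k-1)$ unmatched tokens, so the conditional probability of landing in a fresh row is at least $c-o(1)$. Multiplying over the $M=O(1)$ steps yields $\pr{\mathcal{F}}\geq(c-o(1))^{M}=\Omega(1)$, and the lemma follows.

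I expect the main obstacle to be conceptual rather than computational: one must recognize that the condition $\sum_{i\geq\kappa(S)}r_i(\ell)=\Omega(N(\ell))$ is exactly what keeps $B$ bounded away from $N(\ell)$, thereby guaranteeing each special token an $\Omega(1)$ chance of avoiding the crowded big rows; without it the $O(1)$ special tokens would be forced onto the big rows and collisions would occur with probability bounded away from $0$ (this is precisely the situation ruled out here and driving the necessity direction in Lemma~\ref{lem:extranec}). The two structural facts that make the reduction go through are $M=O(1)$, which comes from condition~\ref{it:n1} via~\eqref{eq:usefulfacts}, and the bound on the forbidden row mass that holds \emph{uniformly} over the random history of earlier reveals; the only care needed in the calculation is to verify this uniform bound by maximizing over the $k-1$ largest row sums rather than over the actual (random) rows that were hit.
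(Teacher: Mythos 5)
Your proof is correct, but it takes a genuinely different route from the paper's. The paper argues in two stages: it first forces the block $[1,\kappa(S)-1]\times[1,\kappa'(S)-1]$ to be identically zero, showing this event has probability at least a constant $\alpha$ (the product in~\eqref{eq:ex1b}), and then observes that deleting the rows $i\le\kappa(S)-1$ leaves a residual instance with $\max_i r_i'=o(N)$ and total mass $\Omega(N)$, to which Proposition~\ref{prop:suf} applies; the lemma follows by multiplying the two probabilities. You instead exploit the rigidity coming from $r_1=\Omega(N)$ in~\eqref{eq:condsec}: since $\sum_j c_j(c_j-1)=O(1)$, there are only $M=O(1)$ type-2 tokens in columns of sum at least $2$, and the single event that these land in pairwise distinct rows already forces the table to be binary. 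Your sequential revelation bound is sound --- the forbidden row mass after $k-1$ reveals is at most $r_1+\cdots+r_{k-1}\le B+M\,r_{\kappa(S)}=(1-c+o(1))N$ uniformly over histories, and you correctly identify that the hypothesis $\sum_{i\ge\kappa(S)}r_i=\Omega(N)$ is exactly what keeps $B$ away from $N$. What each approach buys: yours is self-contained and avoids invoking the entire inclusion--exclusion machinery behind Proposition~\ref{prop:suf}, at the cost of relying on the strong structural fact $M=O(1)$, which is specific to the $r_1=\Omega(N)$ regime; the paper's version is more modular, reuses Proposition~\ref{prop:suf} as a black box, and its block decomposition mirrors the structure used in the other lemmas of Section~7 (notably Lemmas~\ref{lem:extranec} and~\ref{lem:extrasuf2}).
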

\begin{proof}
   We first show that, with probability bounded away from $0$, $T_{i,j}=0$
   for each $i\in [1,\kappa(S)-1]$ and $j\in[1,\kappa'(S)-1]$. Let $X(\ell)=\sum_{i=1}^{\kappa(S)-1}r_{i}(\ell)$. Note
   that, by assumption, $N(\ell)-X(\ell)=\Omega (N(\ell))$. Therefore, there exists a constant 
   $\alpha \in (0,1)$ such that, for all $\ell$ sufficiently large, 
   \begin{equation}
   \pr{\bigcap_{i=1}^{\kappa(S)-1}\bigcap_{j=1}^{\kappa'(S)-1} \{T_{i,j}=0\}}
   =\prod_{j=1}^{\kappa^{\prime }(S)-1}\frac{(N(\ell)-X(\ell)-\sum_{j^{\prime
   }=1}^{j-1}c_{j^{\prime }}(\ell))^{\underline{c_{j}(\ell)}}}{(N(\ell)-\sum_{j^{\prime
   }=1}^{j-1}c_{j^{\prime }}(\ell))^{\underline{c_{j}(\ell)}}}\geq \alpha ,  \label{eq:ex1b}
   \end{equation}%
   since by~\eqref{eq:usefulfacts} we have $c_{j}=O(1)$ and $\sum_{j^{\prime }=1}^{j-1}c_{j^{\prime }}\leq
   c_{1}\kappa^{\prime }(S)=O(1)$ for $j\leq \kappa^{\prime }(S)-1$. Now, for all $%
   j\geq \kappa^{\prime }(S)$ and sufficiently large $\ell$, we have $c_{j}(\ell)\leq 1$,
   and all the entries $(i,j)$ for which $i \leq \kappa(S)-1$ and $j\geq
   \kappa^{\prime }(S)$ are binary with probability one. We can then conclude
   that the probability that all the entries for rows $i\leq\kappa(S)-1$ are
   binary is at least $\alpha$. Once we have sampled all the rows for which $%
   i\leq \kappa(S)-1$, we can then remove these rows and obtain new vectors 
   $\mathbf{r^{\prime }}(\ell)$ and $\mathbf{c^{\prime }}(\ell)$ for each $\ell$ such that $%
   \max_{i}r_{i}^{\prime }(\ell)=o(N(\ell))$ (by the definition of $\kappa(S)$) and $%
   \sum_{i}r_{i}^{\prime }(\ell)=\sum_{i=\kappa(S)}^\infty r_{i}(\ell)=\Omega (N(\ell))$. Note that the sequence 
   $(\mathbf{r^{\prime }}(\ell),\mathbf{c^{\prime }}(\ell),N(\ell))_\ell$ fall into the setting of
   Proposition~\ref{prop:suf}. Therefore, letting $T^{\prime }$ be a table generated
   from the row and column sums $\mathbf{r^{\prime }}(\ell)$ and $\mathbf{c^{\prime }}(\ell)$, we obtain 
   \begin{equation}
   \pr{T\in \Omega _{\mathbf{r},\mathbf{c}}}\geq \alpha \pr%
   {T^{\prime }\in \Omega _{\mathbf{r^{\prime }},\mathbf{c^{\prime }}}}=\Omega
   (1),  \label{eq:ex1a}
   \end{equation}%
   from (\ref{eq:ex1b}) and Proposition~\ref{prop:suf}.
\end{proof}

Now, in Lemma~\ref{lem:extrasuf2}, we consider the case where 
$\sum_{i=\kappa(S)}^\infty r_i(\ell)=o(N(\ell))$ for which $r_i(\ell)$ is either $\Omega(N(\ell))$ or $o(N(\ell))$ for all $i\geq 1$ as $\ell\to\infty$.
In this situation, we have that the condition $\limsup_{\ell\to\infty}c_1(\ell)< \kappa(S)$ is sufficient to obtain
$\pr{T\in \Omega _{\mathbf{r},\mathbf{c}}}=\Omega(1)$. The next lemma establishes this result when $\kappa(S)<\infty$.
\begin{lemma}
   \label{lem:extrasuf2} Consider a sequence $S=({\bf r}(\ell),{\bf c}(\ell),N(\ell))_\ell$ satisfying~\eqref{eq:condsec} and 
   for which $\kappa(S)<\infty$ and $r_i(\ell)$ is either $\Omega(N(\ell))$ or $o(N(\ell))$  as $\ell\to\infty$ for all $i\geq 1$.
   If $\sum_{i=\kappa(S)}^\infty r_i(\ell)=o(N(\ell))$
   and $\limsup_{\ell\to\infty}c_1(\ell)\leq \kappa(S)-1$, 
   then $\pr{T\in \Omega _{\mathbf{r},\mathbf{c}}}=\Omega(1)$.
\end{lemma}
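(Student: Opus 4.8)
The plan is to exploit a structural dichotomy forced by the hypotheses: under~\eqref{eq:condsec} together with $\limsup_\ell c_1(\ell)\le\kappa(S)-1$, the only entries that can ever be non-binary live in the $O(1)$ columns of large sum, and there are at least two ``big'' rows carrying almost all the mass, leaving enough room to place each large column without creating a double edge. Concretely, I would write $k=\kappa(S)-1$ for the number of big rows and $k'=\kappa'(S)-1$ for the number of \emph{dangerous} columns (those $j\le\kappa'(S)-1$). By~\eqref{eq:usefulfacts} both $k,k'=O(1)$, and since $\liminf_\ell c_1(\ell)\ge 2$ while $\limsup_\ell c_1(\ell)\le\kappa(S)-1$ I would first note $\kappa(S)\ge 3$, so $k\ge 2$. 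The dichotomy hypothesis that each $r_i$ is $\Omega(N)$ or $o(N)$ then yields a constant $\beta>0$ with $r_i(\ell)\ge\beta N(\ell)$ for every $i\le k$ and all large $\ell$. On the other hand, every column $j>k'$ has $c_j(\ell)\le 1$ eventually and contributes only binary entries, so $\{T\in\Omega_{\mathbf r,\mathbf c}\}$ is governed entirely by the $k'$ dangerous columns, each of which has $c_j\le c_1\le\kappa(S)-1=k$ tokens for large $\ell$.

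The main obstacle, and the reason this case is delicate, is that a moment computation cannot work directly: the expected number of double edges among entries with $i\le k$ and $j\le k'$ equals $\sum_{i\le k,\,j\le k'} r_i^{\underline 2}c_j^{\underline 2}/(2N^{\underline 2})$, which is $\Theta(1)$ but need not be small, so Markov's inequality gives no useful lower bound on $\pr{Z=0}$. Instead I would argue by a direct lower bound that uses the finite dimensionality of the problem. For each dangerous column $j\le k'$ let $A_j$ be the event that the $c_j$ tokens of column $j$ are matched to type-1 tokens lying in $c_j$ \emph{distinct big rows}. Because $c_j\le k$ there is room to do this, and the key observation is that $\bigcap_{j\le k'}A_j\subseteq\{T\in\Omega_{\mathbf r,\mathbf c}\}$: under $A_j$ column $j$ is binary, and all columns $j>k'$ are binary for free, so every entry of $T$ is at most $1$.

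To bound $\pr{\bigcap_{j\le k'}A_j}$ I would reveal the matchings of the dangerous columns one at a time, letting $\mathcal F_{j-1}$ denote the matchings of columns $1,\dots,j-1$, and estimate $\pr{A_j\mid\mathcal F_{j-1}}$ on the event $A_{<j}$. Since the earlier dangerous columns consume only $\sum_{j'<j}c_{j'}=O(1)$ type-1 tokens, after conditioning each big row still has $r_i-O(1)=\Omega(N)$ free tokens out of $N-O(1)$ total; lower-bounding by the single assignment that sends the $a$-th token of column $j$ to big row $a$ for $a=1,\dots,c_j$ gives $\pr{A_j\mid\mathcal F_{j-1}}\ge\prod_{a=1}^{c_j}\frac{r_a-O(1)}{N-O(1)}\ge(\beta-o(1))^{c_j}\ge(\beta-o(1))^{k}\ge\delta_0$ for a constant $\delta_0>0$ and all large $\ell$. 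Iterating over the $k'=O(1)$ dangerous columns then yields $\pr{T\in\Omega_{\mathbf r,\mathbf c}}\ge\pr{\bigcap_{j\le k'}A_j}\ge\delta_0^{k'}=\Omega(1)$. The only point requiring care is the uniformity of the conditional estimate over the history $\mathcal F_{j-1}$, namely that only $O(1)$ tokens are ever consumed so the free mass of each big row stays $\Omega(N)$; this is precisely where $\kappa(S)<\infty$ and $\sum_{i\ge\kappa(S)}r_i=o(N)$ are used, as they guarantee finitely many big rows carrying all but $o(N)$ of the total mass.
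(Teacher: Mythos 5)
Your proof is correct and follows essentially the same route as the paper: reveal the $\kappa'(S)-1=O(1)$ large-sum columns one at a time and, using $c_j\le c_1\le\kappa(S)-1$ together with $r_i=\Omega(N)$ for each row $i<\kappa(S)$, obtain a constant per-column probability that all of that column's tokens land in distinct big rows. The only (harmless) difference is that you fold into a single good event what the paper splits into two steps, namely first showing that rows $i\ge\kappa(S)$ contribute nothing to the dangerous columns with probability $1-o(1)$ and then bounding the conditional probability via the count $\binom{\kappa(S)-1}{c_j}\alpha^{c_j}$.
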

\begin{proof}
   %First we consider the case $\kappa<\infty$.
   When $\sum_{i=\kappa(S)}^\infty r_i(\ell)=o(N(\ell))$, the same derivation that led to~\eqref{eq:step1} gives that
   $\pr{\bigcap_{i=\kappa(S)}^\infty\bigcap_{j=1}^{\kappa'(S)-1}\{T_{i,j}=0\}}=1-o(1)$. Therefore, when assigning a column
   $j\leq \kappa'(S)-1$,
   only the entries for rows
   $i\leq\kappa(S)-1$ will be non-zero with probability $1-o(1)$. 
   Since
   $\sum _{j=1}^{\kappa'(S)-1}c_j(\ell)\leq c_1(\ell)\kappa'(S)=O(1)$ by~\eqref{eq:usefulfacts} and $r_i(\ell)=\Omega(N(\ell))$ for all $i\leq \kappa(S)-1$,
   we have that 
   there is a constant $\alpha $ such that, for all large enough $\ell$, the probability that
   a given type-2 token for a column $j\leq \kappa'(S)-1$ is matched to a type-1 token for a given row $i\leq \kappa(S)-1$
   is at least $\alpha $ uniformly over all possible matchings for the other
   type-2 tokens from columns in $[1,\kappa'(S)-1]$. 
   Therefore,
   \begin{equation}
   \pr{\bigcap _{i=1}^{\kappa(S)-1}\bigcap _{j=1}^{\kappa'(S)-1}\{T_{i,j}\leq 1\}\;\middle \vert \;
   \bigcap _{i=\kappa(S)}^\infty\bigcap _{j=1}^{\kappa'(S)-1}\{T_{i,j}=0\}}\geq \prod _{j=1}^{\kappa'(S)-1}
   \binom{\kappa(S)-1}{c_j(\ell)}\alpha ^{c_j(\ell)}=\Omega (1).  \label{eq:step2}
   \end{equation}
   We then obtain that, with
   probability bounded away from zero, all the entries for columns in $[1,\kappa'(S)-1]$ are
   binary. For sufficiently large $\ell$, the remaining entries are all binary with probability one since, for
   $j\geq\kappa'(S)$, we have $\limsup_{\ell\to\infty} c_j(\ell)\leq1$. 
\end{proof}

Now we turn to the case where $\kappa(S)=\infty$.
\begin{lemma}\label{lem:kappainfty} 
   Consider a sequence $S=({\bf r}(\ell),{\bf c}(\ell),N(\ell))_\ell$ satisfying~\eqref{eq:condsec} and 
   for which $\kappa(S)=\infty$ and $r_i(\ell)=\Omega(N(\ell))$ for all $i\leq \limsup_{\ell \to\infty} c_1(\ell)$.
   Then $\pr{T\in \Omega _{\mathbf{r},\mathbf{c}}}=\Omega(1)$.
\end{lemma}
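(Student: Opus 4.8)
The plan is to reuse the distinct-rows mechanism behind~\eqref{eq:step2} from the proof of Lemma~\ref{lem:extrasuf2}, adapting it to the present setting. Since $\kappa(S)=\infty$ there are no rows of sum $o(N)$ to zero out, so rather than forcing the ``bad'' entries to vanish with probability $1-o(1)$, I would only exhibit one favorable configuration of \emph{constant} probability. This suffices because the target is $\pr{T\in\Omega_{\mathbf r,\mathbf c}}=\Omega(1)$, not $1-o(1)$.

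First I would record the structural facts implied by~\eqref{eq:condsec}, $\kappa(S)=\infty$, and the hypothesis. Write $p=\limsup_{\ell\to\infty}c_{1}(\ell)$, which is finite and $O(1)$ by~\eqref{eq:usefulfacts}; by assumption $r_{i}(\ell)=\Omega(N(\ell))$ for every $i\le p$, so there are a constant $\delta>0$ and an index $\ell_{0}$ with $r_{i}(\ell)\ge\delta N(\ell)$ for all $i\le p$ and $\ell\ge\ell_{0}$. Since $p=\limsup_{\ell}c_{1}(\ell)$ and the $c_{j}(\ell)$ are integer-valued and non-increasing in $j$, after enlarging $\ell_{0}$ we may also assume $c_{1}(\ell)\le p$ for all $\ell\ge\ell_{0}$, and $c_{j}(\ell)\le 1$ for every $j\ge\kappa'(S)$ (because $\limsup_{\ell}c_{\kappa'(S)}(\ell)\le1$ forces $c_{\kappa'(S)}(\ell)\le1$ for large $\ell$, and $c_{j}$ is non-increasing in $j$). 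Consequently the only columns that can ever produce a non-binary entry are the \emph{interesting} columns $j\le\kappa'(S)-1$, and the total number of type-2 tokens they carry is $K:=\sum_{j=1}^{\kappa'(S)-1}c_{j}(\ell)\le\kappa'(S)\,c_{1}(\ell)=O(1)$ by~\eqref{eq:usefulfacts}.

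Next, exactly as in Lemma~\ref{lem:extrasuf2}, I would fix a uniform lower bound on single-edge probabilities: conditioned on any partial matching of at most $K=O(1)$ tokens, a designated big row $i\le p$ retains at least $\delta N-O(1)$ unmatched type-1 tokens out of at most $N$, so for $\ell$ large the conditional probability that a given unmatched type-2 token is matched into row $i$ is at least some constant $\alpha>0$. I would then lower-bound $\pr{T\in\Omega_{\mathbf r,\mathbf c}}$ by the probability of one explicit event: reveal the $K$ interesting tokens one at a time and, for each interesting column $j$, send its $c_{j}\le c_{1}\le p$ tokens to the distinct big rows $1,2,\dots,c_{j}$ (legitimate precisely because $c_{j}\le p$; big rows may be reused across different columns). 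By the chain rule and the bound $\alpha$, this event has probability at least $\alpha^{K}=\Omega(1)$, and on it every entry of an interesting column equals $0$ or $1$ while every remaining column is binary since $c_{j}\le1$; hence the whole table is binary and $\pr{T\in\Omega_{\mathbf r,\mathbf c}}\ge\alpha^{K}=\Omega(1)$. This is the analogue of~\eqref{eq:step2}, with the role of the $\kappa(S)-1$ big rows there now played by the $p$ big rows here. The main obstacle is the bookkeeping that guarantees enough big rows to separate each column's tokens, i.e.\ that $c_{j}\le p$ for every interesting column, together with checking that the uniform bound $\alpha$ survives the $O(1)$ prior conditioning; once these hold, the product of constantly many constant factors is bounded away from $0$ and the lemma follows.
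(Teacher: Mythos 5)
Your proposal is correct and follows essentially the same route as the paper: the paper also reduces to the $O(1)$ type-2 tokens of the columns $j<\kappa'(S)$, sends them into the $h=\limsup_\ell c_1(\ell)$ big rows with probability $\Theta(1)$ (its analogue of~\eqref{eq:ex1b}), and then separates each column's tokens into distinct big rows as in~\eqref{eq:step2}. Your only deviation is organizational — you exhibit a single explicit assignment and bound it by $\alpha^{K}$ in one chain-rule step rather than splitting into two conditional stages — which is a sound simplification.
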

\begin{proof}
   We have $\sum_{j=1}^{\kappa'(S)-1}c_j(\ell)\leq c_1(\ell)\kappa'(S)=O(1)$ by~\eqref{eq:usefulfacts}. 
   Let $h=\limsup_{\ell\to\infty}c_1(\ell)$. Therefore, we have $c_j(\ell)\leq h$ for all $\ell$ sufficiently large and all $j$. 
   Also, since $r_1(\ell)=\Omega(N(\ell))$, there exists a constant $\epsilon>0$ so that 
   $\sum_{i=1}^h r_i(\ell) \geq r_1(\ell) \geq \epsilon N(\ell)$ for all sufficiently large $\ell$. 
   Then, we can use a similar derivation as in~\eqref{eq:ex1b} and show that 
   the probability that all type-2 tokens for columns $j<\kappa'(S)$ is matched to type-1 tokens for rows $i\leq h$ is at least 
   $\prod_{j=1}^{\kappa'(S)-1}\left(\tfrac{\epsilon N(\ell)-\sum_{j'=1}^{j-1}c_{j'}(\ell)}{N(\ell)}\right)^{c_{j}(\ell)}=\Theta(1)$. 
   Then, since $r_i(\ell)=\Omega(N(\ell))$ for all $i\leq h$ and $h\geq c_1(\ell)$ for all large enough $\ell$, a derivation 
   similar to~\eqref{eq:step2} shows that all the entries for columns $j<\kappa'(S)$ are binary with probability bounded away from zero. Since for the remaining columns
   $j\geq \kappa'(S)$ we have $c_j(\ell)\leq 1$ for all large enough $\ell$, all these entries are binary with probability one and we obtain $\pr{T\in \Omega _{\mathbf{r},\mathbf{c}}}=\Omega(1)$.
\end{proof}

Now we use the three lemmas above to show that condition~\ref{it:n2} is sufficient.
Let $S'=({\bf r}'(\ell'),{\bf c}'(\ell'),N'(\ell'))_{\ell'}$ be an arbitrary subsequence of $S=({\bf r}(\ell),{\bf c}(\ell),N(\ell))_\ell$. Then, by condition~\ref{it:n2}, it holds that 
either 
\begin{equation}
   \limsup_{\ell'\to\infty}\sum_{i=\kappa(S')}^\infty \tfrac{r_i'(\ell')}{N'(\ell')}>0 \quad\text{ or }\quad
   \limsup_{\ell'\to\infty} c_1'(\ell') < \kappa(S').
   \label{eq:cond2revisited}
\end{equation}
We want to show that
\begin{equation}
   \pr{T\in \Omega _{\mathbf{r}'',\mathbf{c}''}}=\Omega(1) \text{ for some subsequence $S''=({\bf r}''(\ell''),{\bf c}''(\ell''),N''(\ell''))_{\ell''}$ of $S'$}.
   \label{eq:extraclaim}
\end{equation}
Once this is established, since $S'$ is an arbitrary subsequence of $S$,
we have that~\eqref{eq:extraclaim} holds for every subsequence of $S$.
Then, using the subsubsequence principle of Lemma~\ref{lem:subsubsequence}, we obtain that 
$$
   \pr{T\in \Omega _{\mathbf{r},\mathbf{c}}}=\Omega(1),
$$
which concludes the proof of Proposition~\ref{prop:extra}.

Our goal now is to prove~\eqref{eq:extraclaim}. 
First, assume that $\kappa(S')<\infty$ 
and note that if %$\sum_{i=\kappa(S')}^\infty r'_i(\ell')=\Omega(N'(\ell'))$, we can take 
% $S''\equiv S'$ and~\eqref{eq:extraclaim} follows from Lemma~\ref{lem:extrasuf}. 
% Similarly, if 
$$
   \limsup_{\ell'\to\infty}\sum_{i=\kappa(S')}^\infty \frac{r'_i(\ell')}{N'(\ell')}
   > 0,
$$
then there exists a subsequence $S''=({\bf r}''(\ell''),{\bf c}''(\ell''),N''(\ell''))_{\ell''}$ of $S'$ such that 
$\sum_{i=\kappa(S')}^\infty r''_i(\ell'')=\Omega(N''(\ell''))$. Since $\kappa(S')\geq \kappa(S'')$ as observed in~\eqref{eq:kappamonotone},
we have that $\sum_{i=\kappa(S'')}^\infty r''_i(\ell'')=\Omega(N''(\ell''))$, 
and~\eqref{eq:extraclaim} follows from Lemma~\ref{lem:extrasuf}.
Therefore, we now assume that 
$$
   \sum_{i=\kappa(S')}^\infty r'_i(\ell') = o(N'(\ell')),
$$
which by the condition in~\eqref{eq:cond2revisited} gives that $\limsup_{\ell'\to\infty} c_1'(\ell') < \kappa(S')$.
By the definition of $\kappa(S')$, we have that $\limsup_{\ell'\to\infty} \tfrac{r_{i}'(\ell')}{N'(\ell')}>0$ for all $i< \kappa(S')$.
Since the $r_i'(\ell')$ are non-increasing with $i$ for any fixed $\ell$, there exists a subsequence $S''=({\bf r}''(\ell''),{\bf c}''(\ell''),N''(\ell''))_{\ell''}$ of $S'$ such that 
$$
   r''_i(\ell'') = \Omega(N''(\ell'')) \text{ for all $i< \kappa(S')$}.
$$
Also, for this subsequence, we have $\kappa(S'')=\kappa(S')$ and, consequently, 
$\sum_{i=\kappa(S'')}^\infty r''_i(\ell'') = o(N''(\ell''))$. Moreover, we have that $\limsup_{\ell''\to\infty}c_1''(\ell'') < \kappa(S'')$ by~\eqref{eq:kappamonotone} and the fact that 
$\limsup_{\ell'\to\infty} c_1'(\ell') < \kappa(S')$.
Then, from Lemma~\ref{lem:extrasuf2} we obtain~\eqref{eq:extraclaim}.

When $\kappa(S')=\infty$, we take $s=\limsup_{\ell'\to\infty}c_1'(\ell')$ and, since $c_1'(\ell')=O(1)$, we can find a subsequence $\hat S=({\bf\hat r}(\hat \ell),{\bf\hat c}(\hat \ell),\hat N(\hat \ell))_{\hat \ell}$ of 
$S'$ such that $\hat r_i(\hat \ell)=\Omega(\hat N(\hat \ell))$ as $\hat \ell\to\infty$ for all $i\leq s$. 
If $\kappa(\hat S)<\infty$, the argument above holds and we can find a subsequence $S''=({\bf r}''(\ell''),{\bf c}''(\ell''),N''(\ell''))_{\ell''}$ of 
$\hat S$ (and, consequently, also of~$S'$) for which~\eqref{eq:extraclaim} holds. Otherwise, we are in the setting of Lemma~\ref{lem:kappainfty}, which
establishes~\eqref{eq:extraclaim} for $S''\equiv \hat S$ and concludes the proof of Proposition~\ref{prop:extra}.

\section{Conclusions}

\label{sec:conc} We have characterized the input sequences for which the
configuration model is suitable for uniformly sampling and counting binary
contingency tables in optimal time (i.e., linear as a function of the number 
$N$ of unit entries in the table). Surprisingly, given known results for the
case of symmetric tables, having a bounded number of expected double edges
in the table is just a necessary condition for optimality but not
sufficient. It turns out that the full characterization for optimality in
the non-symmetric case relates to the behavior of very big rows (i.e. rows
of size $\Omega \left( N\right) $). Allowing such type of growth introduces
significant qualitative differences between symmetric and non-symmetric
tables. In turn, such differences give rise to technical challenges that are
not present in the symmetric case. Our results also have important practical
implications in applied settings that demand the development of
easy-to-implement and fast algorithms for uniform generation of binary
contingency tables.

We conclude by mentioning two open problems. Since, as we mentioned in
Section~\ref{sec:intro}, there is no need to employ a complicated sequential
importance sampling procedure when the conditions above hold, it is
interesting to know whether one can construct more specialized importance
sampling procedures to obtain low complexity polynomial algorithm when the
conditions in Theorem~\ref{thm:main} are not satisfied. Another open problem
consists in counting the number of (not necessarily binary) contingency
tables. In particular, it would be interesting to know whether a necessary
and sufficient condition like the one obtained in this paper can be derived
for that case. We remark that for general contingency tables, the
configuration model does not generate a table uniformly at random, which
makes the problem more challenging.

\section*{Acknowledgments}

We are thankful to Alistair Sinclair for bringing reference~\cite{janson2009}
to our attention and for helpful discussions. We are also thankful to the anonymous referees for their 
useful comments.
J.~Blanchet acknowledges support from the NSF foundation through the grants DMS-0806145, DMS-0846816 and DMS-1069064.
A.~Stauffer acknowledges support from a Fulbright/CAPES scholarship and NSF grants CCF-0635153 and DMS-0528488.

\bibliographystyle{plain}
\bibliography{binary}

\begin{thebibliography}{10}

\bibitem{barvinok2010}
A.~Barvinok.
\newblock On the number of matrices and a random matrix with prescribed row and
  column sums and 0-1 entries.
\newblock {\em Advances in Mathematics}, 224(1):316--339, 2010.

\bibitem{bayati2007}
M.~Bayati, J.H. Kim, and A.~Saberi.
\newblock A sequential algorithm for generating random graphs.
\newblock In {\em Proceedings of 11th Workshop on Randomization and Computation
  {(RANDOM)}}, pages 326--340, 2007.

\bibitem{besag1989}
J.~Besag and P.~Clifford.
\newblock Generalized {M}onte {C}arlo significance tests.
\newblock {\em Biometrika}, 76(4):633--642, 1989.

\bibitem{bezakova2007}
I.~Bez\'akov\'a, N.~Bhatnagar, and E.~Vigoda.
\newblock Sampling binary contingency tables with a greedy start.
\newblock {\em Random Structures and Algorithms}, 30(1-2):168--205, 2007.

\bibitem{bezakova2006}
I.~Bez\'akov\'a, A.~Sinclair, and D.~\v{S}tefankovi\v{c}.
\newblock Negative examples for sequential importance sampling of binary
  contingency tables.
\newblock In {\em Proceedings of the {E}uropean {S}ymposium on {A}lgorithms},
  pages 136--147, 2006.

\bibitem{bezakova2008}
I.~Bez\'akov\'a, D.~\v{S}tefankovi\v{c}, V.~Vazirani, and E.~Vigoda.
\newblock Accelerating simulated annealing for the permanent and combinatorial
  counting problems.
\newblock {\em {SIAM} Journal of Computing}, 37(5):1429--1454, 2008.

\bibitem{blanchet2009}
J.~Blanchet.
\newblock Efficient importance sampling for binary contingency tables.
\newblock {\em Annals of Applied Probability}, 19(3):949--982, 2009.

\bibitem{bollobas1980}
B.~Bollob\'as.
\newblock A probabilistic proof of an asymptotic formula for the number of
  labelled regular graphs.
\newblock {\em European Journal of Combinatorics}, 1:311--316, 1980.

\bibitem{canfield2008}
E.R. Canfield, C.~Greenhill, and B.D. {McKay}.
\newblock Asymptotic enumeration of dense 0-1 matrices with specified line
  sums.
\newblock {\em Journal of Combinatorial Theory, Series A}, 115:32--66, 2008.

\bibitem{chen2005}
Y.~Chen, P.~Diaconis, S.P. Holmes, and J.S. Liu.
\newblock Sequential {M}onte {C}arlo methods for statistical analysis of
  tables.
\newblock {\em Journal of the {A}merican {S}tatistical {A}ssociation},
  100(469):109--120, 2005.

\bibitem{dyer1997}
M.~Dyer, R.~Kannan, and J.~Mount.
\newblock Sampling contingency tables.
\newblock {\em Random Structures and Algorithms}, 10(4):487--506, 1997.

\bibitem{greenhill2008b}
C.~Greenhill, F.B. Holt, and N.~Wormald.
\newblock Expansion properties of a random regular graph after random vertex
  deletions.
\newblock {\em European Journal of Combinatorics}, 29:1139--1150, 2008.

\bibitem{greenhill2006}
C.~Greenhill, B.D. {McKay}, and X.~Wang.
\newblock Asymptotic enumeration of sparse 0-1 matrices with irregular row and
  column sums.
\newblock {\em Journal of Combinatorial Theory (Series A)}, 113:291--324, 2006.

\bibitem{janson2009}
S.~Janson.
\newblock The probability that a random multigraph is simple.
\newblock {\em Combinatorics, Probability and Computing}, 18(1-2):205--225,
  2009.

\bibitem{janson2000}
S.~Janson, T.~\L{}uczak, and A.~Ruci\'nski.
\newblock {\em Random Graphs}.
\newblock John Wiley \& Sons, 2000.

\bibitem{jerrum2004}
M.~Jerrum, A.~Sinclair, and E.~Vigoda.
\newblock A polynomial-time approximation algorithm for the permanent of a
  matrix with non-negative entries.
\newblock {\em Journal of the {A}ssociation for {C}omputing {M}achinery},
  51(4):671--697, 2004.

\bibitem{mckay1984}
B.D. {McKay}.
\newblock Asymptotics for 0-1 matrices with prescribed line sums.
\newblock In {\em Enumeration and Design}, pages 225--238. Academic Press,
  Canada, 1984.

\bibitem{mckay1985}
B.D. {McKay}.
\newblock Asymptotics for symmetric 0-1 matrices with prescribed row sums.
\newblock {\em Ars Combinatoria}, 19(A):15--25, 1985.

\bibitem{mitzenmacher2005}
M.~Mitzenmacher and E.~Upfal.
\newblock {\em Probability and Computing: randomized algorithms and
  probabilistic analysis}.
\newblock Cambridge University Press, Cambridge, UK, 2005.

\bibitem{oneil1969}
P.E. {O'Neil}.
\newblock Asymptotics and random matrices with row-sum and column
  sum-restrictions.
\newblock {\em Bulletin of the American Mathematical Society},
  75(6):1276--1282, 1969.

\bibitem{sinclair1989}
A.~Sinclair and M.~Jerrum.
\newblock Approximate counting, uniform generation and rapidly mixing markov
  chains.
\newblock {\em Information and Computation}, 82(1):93--133, 1989.

\bibitem{valiant1979}
L.~Valiant.
\newblock The complexity of computing the permanent.
\newblock {\em Theoretical Computer Science}, 8:189--201, 1979.

\end{thebibliography}
\end{document}